\newtheorem{theorem}{Theorem}[section]
\newtheorem{lemma}[theorem]{Lemma}
\newtheorem{corollary}[theorem]{Corollary}
\newtheorem{proposition}[theorem]{Proposition}
\newtheorem{claim}[theorem]{Claim}
\newtheorem{question}[theorem]{Question}
\theoremstyle{definition}     
\newtheorem{definition}[theorem]{Definition}
\newtheorem{example}[theorem]{Example}
\newtheorem{remark}[theorem]{Remark}
\mathchardef\mhyphen="2D    
\newcommand{\cE}{\mathcal{E}}
\newcommand{\ZZ}{\mathbb{Z}}
\newcommand{\RR}{\mathbb{R}}
\newcommand{\F}{\mathbb{F}}
\newcommand{\fatness}{\operatorname{span}}   
\newcommand{\VB}{{\operatorname{\it VB}}}
\newcommand{\ep}{\varepsilon}
\renewcommand{\@biblabel}[1]{[#1]\hfill}
\newcounter{commentcounter}
\newcommand{\tikzmark}[1]{\tikz[overlay,remember picture] \node (#1) {};}
\newcommand{\DrawBox}[1][]{%
    \tikz[overlay,remember picture]{
    \draw[blue,#1]
      ($(left)+(-0.2em,0.9em)$) rectangle
      ($(right)+(0.2em,-0.3em)$);}
}
\begin{document}

\title[Alexander invariants of periodic virtual knots]{Alexander invariants of periodic virtual knots}

\author[H. Boden]{Hans U. Boden}
\address{Mathematics \& Statistics, McMaster University, Hamilton, Ontario}
\email{boden@mcmaster.ca}

\author[A. Nicas]{Andrew J. Nicas}
\address{Mathematics \& Statistics, McMaster University, Hamilton, Ontario}
\email{nicas@mcmaster.ca}

\author[L. White]{Lindsay White}
\address{Mathematics \& Statistics, McMaster University, Hamilton, Ontario}
\email{whitela3@mcmaster.ca}

\thanks{
}

\subjclass[2010]{Primary: 57M25, Secondary: 57M27}
\keywords{Virtual knots and links, virtual braids, periodic knots and links, Murasugi's congruence}

\date{\today}

\begin{abstract}
We show that every periodic virtual knot can be realized as the closure of a periodic virtual braid and use this to  study the Alexander invariants of periodic virtual knots.
If $K$ is a $q$-periodic and almost classical knot, we show that its quotient knot $K_*$ is also almost classical, and in the case $q=p^r$ is a prime power, we establish an
analogue of Murasugi's congruence relating the Alexander polynomials of $K$ and $K_*$ 
over the integers modulo $p$.
This result is applied to the problem of determining the possible periods of a virtual knot $K$.
One consequence is that
if $K$ is an almost classical knot with a nontrivial Alexander polynomial, then it is $p$-periodic for only finitely many primes $p$.
Combined with parity and Manturov projection, our methods provide conditions that a general virtual knot must satisfy in order to be $q$-periodic. 
\end{abstract} 

\maketitle


\section{Introduction}

An oriented knot or link $K$ in $S^3$ is called \emph{periodic} of  {\it period} $q > 1$ if there is an orientation preserving diffeomorphism $\varphi \colon (S^3, K) \to (S^3,K)$ of finite order $q$  whose fixed point set, $C$, is non-empty and disjoint from $K$.
The solution of the Smith Conjecture implies that $C$ is an unknotted circle and furthermore that $\varphi$ is 
conjugate via a self-diffeomorphism of $S^3$ to a rotation about an axis in $S^3$.
The quotient knot or link, denoted $K_*$, is the image of $K$ under the orbit map
$S^3 \to S^3/\langle \varphi \rangle ~\cong~ S^3$.

In \cite{Flapan-1985}, Flapan proved that a nontrivial classical knot admits only finitely many periods,
and her result was extended to links by Hillman in  \cite{Hillman-1984}.
In \cite{Edmonds-1984}, Edmonds used minimal surface theory to establish a strong upper bound on the period of a given knot $K$ in terms of its Seifert genus.

There are many known conditions on the invariants of classical knot $K$ for it to be periodic.
These include Murasugi's conditions on its Alexander polynomial $\Delta_K(t)$ \cite{Murasugi-1971},
conditions on its Jones polynomial $V_K(t)$ due to Murasugi \cite{Murasugi-1988} and Yokota \cite{Yokota-1991},
as well as Traczyk's conditions on its Kauffman bracket \cite{Traczyk-1990}. 

We recall Murasugi's theorem, \cite{Murasugi-1971}, for classical knots as it motivates our main results.
Given two Laurent polynomials $f(t), \, g(t) \in \ZZ[t^{\pm 1}]$, we write $f(t) \doteq g(t)$ if $f(t) = \pm t^k g(t)$ for some $k \in \ZZ.$

\begin{theorem}[Murasugi, \cite{Murasugi-1971}] \label{classicalmur}
Let $p$ be a prime and $q=p^r$ a prime power. If $K$ is a $q$-periodic knot with quotient knot $K_*$
and linking number $k$ with the rotation axis then
\begin{enumerate}
\item $~\Delta_{K_*}(t)$ divides $\Delta_{K}(t)$ in $\ZZ[t^{\pm 1}],$ and  
\item $~\Delta_K(t) \doteq (\Delta_{K_*}(t))^{q}(1+t+t^2+ \cdots +t^{k-1})^{q-1} \mod p$.
\end{enumerate}
\end{theorem}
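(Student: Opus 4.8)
The plan is to realize both Alexander polynomials as orders of the first homology of infinite cyclic covers and to exploit the extra symmetry coming from $\varphi$, which lifts to these covers and commutes with the deck transformation $t$. Write $X = S^3 \setminus K$ and $X_* = S^3 \setminus K_*$ for the complements, and let $\widetilde{X}$, $\widetilde{X_*}$ be their infinite cyclic covers, so that $\Delta_K(t)$ and $\Delta_{K_*}(t)$ are the orders of the $\ZZ[t^{\pm 1}]$-modules $A = H_1(\widetilde{X})$ and $A_* = H_1(\widetilde{X_*})$. Removing the axis gives $Y = S^3\setminus(K\cup C)$, a free $\ZZ/q$-space whose quotient is $Y_* = S^3\setminus(K_*\cup C_*)$, and the first preliminary step is to pin down the linking data: a local computation at the branch circle (where the covering is modeled on $z \mapsto z^q$ in a normal disk) shows that a meridian of $C$ maps to $q$ times a meridian of $C_*$, and combined with the fact that $K\to K_*$ is a degree-$q$ map of circles this yields $\mathrm{lk}(K_*,C_*) = \mathrm{lk}(K,C) = k$. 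I would then assemble the two covering directions --- the infinite cyclic cover (linking with the knot) and the $q$-fold cover (linking with the axis) --- into a single tower, so that $\varphi$ acts on $\widetilde X$ with quotient computing $\widetilde{X_*}$ up to the branch locus $\widetilde C$, the preimage of $C_*$.

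For part (1), I would extend scalars to $\mathbb{C}$ and decompose $A \otimes \mathbb{C} = \bigoplus_{j=0}^{q-1} E_j$ into eigenspaces of the induced automorphism $\varphi_*$, with $E_j$ the $\zeta^j$-eigenspace for $\zeta = e^{2\pi i/q}$. Since $\varphi_*$ commutes with $t$, each $E_j$ is a $t$-invariant summand, so $\Delta_K(t)$ factors over $\mathbb{C}[t^{\pm1}]$ as the product of the characteristic polynomials of $t$ on the $E_j$. A transfer argument identifies the invariant summand $E_0$ with $A_* \otimes \mathbb{C}$, whence the characteristic polynomial of $t$ on $E_0$ is $\Delta_{K_*}(t)$, forcing $\Delta_{K_*}(t) \mid \Delta_K(t)$ over $\mathbb{C}[t^{\pm1}]$. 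Because both polynomials lie in $\ZZ[t^{\pm 1}]$ and $\Delta_{K_*}$ may be taken primitive, a Gauss-lemma argument upgrades this to divisibility over $\ZZ[t^{\pm 1}]$, giving (1). The point requiring the most care here is the transfer identification of $E_0$ with the Alexander module of $K_*$, since the branching along $\widetilde C$ must be shown not to contribute to the eigenvalue-$1$ part.

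For part (2) the prime-power hypothesis enters decisively. Working over $\ZZ[\zeta]$ with $\zeta = \zeta_{p^r}$, the prime $p$ is totally ramified, $(p) = (1-\zeta)^{p^{r-1}(p-1)}$, so that $\zeta \equiv 1$ modulo the prime above $p$. Hence after reduction mod $p$ the eigenvalues $\zeta^j$ become indistinguishable and the eigenspace decomposition of part (1) collapses: each of the $q$ eigenspaces contributes the same characteristic polynomial $\Delta_{K_*}$ modulo $p$, which accounts for the factor $(\Delta_{K_*})^q$. The remaining factor $(1+t+\cdots+t^{k-1})^{q-1}$ is the contribution of the branch circle: its lift $\widetilde C$ is a union of lines cyclically permuted by $t$, with $H_0(\widetilde C) \cong \ZZ[t^{\pm1}]/(t^k-1)$ because $\mathrm{lk}(K_*,C_*)=k$, and since $t^k-1 \doteq (t-1)(1+t+\cdots+t^{k-1})$, filling the axis back in via Mayer--Vietoris introduces the factor $1+t+\cdots+t^{k-1}$, which Smith theory forces to appear with multiplicity $q-1$ mod $p$. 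I expect the main obstacle to be precisely this integral/mod-$p$ bookkeeping: proving the collapse at the level of $\F_p$-homology rather than merely over $\mathbb{C}$ requires running the iterated mod-$p$ Smith sequence for the $\ZZ/p^r$-action relating $H_*(\widetilde X;\F_p)$, its $\varphi$-fixed set, and $H_*(\widetilde{X_*};\F_p)$, and then tracking the branch circle's module structure carefully enough to extract the exact exponents $q$ and $q-1$ --- this is where the prime-power condition is indispensable and where the delicate work lies.
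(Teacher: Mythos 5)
First, a point of orientation: the paper does not prove this statement. Theorem~\ref{classicalmur} is Murasugi's classical congruence, quoted from \cite{Murasugi-1971} purely as motivation; what the paper actually proves is the virtual analogue, Theorem~\ref{virtmur-0}, and it does so by an entirely different, elementary route --- realizing $K$ as the closure of a periodic braid, observing that the Jacobian matrix of the resulting Wirtinger presentation is block circulant, and conjugating it over $\F_p$ by an explicit matrix of binomial coefficients into an upper block triangular form whose determinant can be read off. Your proposal instead follows the classical covering-space route (essentially that of Murasugi, Hillman, and Davis--Livingston): decompose the Alexander module of $K$ into eigenspaces of $\varphi_*$ and control each piece. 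That architecture can be made to work, but as written it has gaps that are more than bookkeeping.

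Concretely: (i) the assertion that ``each of the $q$ eigenspaces contributes the same characteristic polynomial $\Delta_{K_*}$ modulo $p$'' is not where the congruence comes from, and taken literally it would give $\Delta_K \doteq \Delta_{K_*}^{q} \bmod p$ with no cyclotomic factor at all. In the standard argument the $\zeta^j$-eigenspace for $j \neq 0$ is identified with a twisted homology of the quotient \emph{link} complement $Y_* = S^3\setminus(K_*\cup C_*)$, whose order is the specialization $\Delta_{K_*\cup C_*}(t,\zeta^j)$ of the two-variable Alexander polynomial; reducing modulo the prime of $\ZZ[\zeta]$ above $p$ sends $\zeta^j \mapsto 1$, and the Torres condition $\Delta_{K_*\cup C_*}(t,1) \doteq (1+t+\cdots+t^{k-1})\,\Delta_{K_*}(t)$ is what produces \emph{both} the extra $\Delta_{K_*}^{q-1}$ and the factor $(1+t+\cdots+t^{k-1})^{q-1}$ --- one factor of each per nontrivial eigenspace. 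Your proposal instead tries to source the cyclotomic factor from a separate Mayer--Vietoris contribution of the branch circle and obtains the exponent $q-1$ by asserting that ``Smith theory forces'' it; that step is not an argument. (ii) Reducing a complex eigenspace decomposition mod $p$ is meaningless unless the decomposition is first carried out over $\ZZ[\zeta]$ (or its localization at the prime above $p$) and the relevant modules are shown to be finitely generated torsion modules there; this is exactly the integrality issue where the prime-power hypothesis bites, and the proposal defers it rather than addressing it. (iii) The transfer identification of $E_0$ with $A_*\otimes\mathbb{C}$ requires the group action on the cover in question to be free, which is why one must work with $Y = S^3\setminus(K\cup C)$ throughout and only pass back to the knot complement at the end; you flag this but do not carry it out. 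In short, the plan is a recognizable outline of the classical proof, but the key step --- the actual origin of $(1+t+\cdots+t^{k-1})^{q-1}$ --- is misattributed, so the proof as proposed does not close.
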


Various extensions and alternative proofs of Murasugi's theorem for classical knots and links have been considered in 
\cite{MR643289,  
MR706531,  
MR1133872, 
MR1115739, 
MR2199457}. 

One way to construct examples of periodic knots is to realize them as the closure of a proper power of a braid.
Indeed, if the quotient knot $K_*$ can be written as the closure of a braid $\beta$ of index $k$, and if $q$ is a positive integer  
relatively prime to $k$, then we take $K$ to be the closure of $\beta^q$. It follows that $K$ is a periodic knot with period $q$
and $k$ is equal to the linking number of $K$ with the rotation axis.
Relaxing the assumption that $k$ and $q$ are relatively prime,  the closure of $\beta^q$ will be a link $L$ with $n=\gcd(k,q)$ components \cite[\S 8.2]{Livingston}. 

We say that the periodic knot or link $L$ {\it admits a periodic braid representative} if $L$ is isotopic to the closure of $\beta^q$ for some braid $\beta$.
In \cite{Lee-Park-1997}, Lee and Park establish necessary conditions on a link $L$ for it to admit a periodic braid representative and  
show that not all periodic knots and links admit periodic braid representatives.

In this paper we study periodic {\it virtual knots} (virtual knots are discussed in Section~\ref{virtualknots}),  defined as follows.
\begin{definition}
A virtual knot $K$ is called {\it periodic with period $q$} if it admits a virtual knot diagram which misses the origin and is invariant under a rotation in the plane by an angle of $2 \pi / q$ about the origin. 
\end{definition}

One useful result that we establish is the following.
\begin{theorem} \label{braid-rep}
Any periodic virtual knot or link $L$ admits a periodic virtual braid representative.
\end{theorem}
Although a periodic classical knot or link cannot always be represented by a periodic classical braid,
Theorem 1.2 guarantees the existence of a periodic {\it virtual} braid representative;
see Examples \ref{ex-pvb} and  \ref{two_bridge_example}.
This provides an alternative approach to establishing conditions that classical knot and link invariants must satisfy in the periodic case
and a means of applying virtual knot theory to classical knots.

In his study of periodic virtual knots,  \cite{Lee-2012} S.\,Y.~Lee  posed the following interesting questions:

\begin{question} \label{question-1}
Does a non-trivial virtual knot admit only finitely many periods?
\end{question}

\begin{question}\label{question-2}
Given a classical knot $K$, can it admit a $q$-periodic virtual knot diagram without admitting any $q$-periodic classical knot diagrams?
\end{question}
 
So far, these basic questions have not been resolved.
There is an assortment of known constraints on certain invariants of a virtual knot or link for it to be periodic,
including conditions on the arrow and index polynomials \cite{Im-Lee-2012},
the Miyazawa polynomial \cite{Kim-Lee-Seo-2009},
the {\it VA}-polynomial \cite{Kim-Lee-Seo-2013},
the writhe and odd writhe polynomials \cite{Bae-Lee},
and the virtual Alexander polynomial \cite{Kim-Lee-Seo-2014}.

For various reasons, Murasugi's theorem
has not been extended to the virtual category.
One obstacle is that the Alexander polynomial does not generalize in an entirely straightforward manner to virtual knots and links,
see \cite{Sawollek} for a discussion of the difficulties involved. 

The main goal of this paper is to establish a generalization of  Murasugi's theorem for virtual knots.  
Although the Alexander polynomial does not give a well-behaved invariant for all virtual knots and links, it
does extend nicely to the subcategory of ``almost classical'' knots, defined below.


\begin{definition}[Almost Classical Knots and  Links] 
$  $\newline       
\vspace{-15pt}   
\begin{enumerate}
\item[(i)] A virtual knot diagram is {\it Alexander numberable} if there exists an integer-valued function $\lambda$ on the set of short arcs satisfying the relations in Figure~\ref{Alexander-Numbering-Def-2}.  
\item[(ii)] Given a virtual knot or link $K$, we say $K$ is \emph{almost classical} if it admits a virtual knot diagram that is Alexander numberable.
\end{enumerate}
\end{definition}

While almost classical knots are defined in terms of Alexander numberings,  
it is helpful to keep in mind that a virtual knot is almost classical if and only if it admits a representative knot in a thickened surface which is homologically trivial,
see \cite{AC} for a discussion. 
Another useful observation is that Alexander numberability and periodicity are compatible in the sense that if a knot $K$ is almost classical and periodic then it admits a periodic virtual knot diagram which is also Alexander numberable (see Theorem~\ref{ACrep}).

Recall that if $K$ is a $q$-periodic virtual knot, then by Theorem~\ref{braid-rep}, we can write $K = \widehat{{\beta}^{q}}$ for some $k$-strand virtual braid $\beta$. 
\begin{theorem}\label{virtmur-0}
Let $K = \widehat{{\beta}^{q}}$ be a $q$-periodic almost classical knot diagram with $q=p^r$ a prime power. Then $K_* = \widehat{\beta}$ and 
\begin{enumerate}
\item $~\Delta_{K_*}(t)$ divides $\Delta_{K}(t)$ in $\ZZ[t^{\pm 1}],$ and  
\item $~\Delta_{K}(t) ~\doteq~ \left(\Delta_{K_*}(t)\right)^{q} \left(f(t) \right)^{q - 1} \mod p,$ 
where $f(t) = {\textstyle \sum_{i=1}^{k}} t^{\lambda_i}$ and $\lambda_i$ is the Alexander number on the $i$-th strand of $\beta$.
\end{enumerate}
\end{theorem}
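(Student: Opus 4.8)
The plan is to compute the Alexander polynomials of $K=\widehat{\beta^{q}}$ and of $K_*=\widehat{\beta}$ through a Burau-type representation of the virtual braid group adapted to the almost classical setting, and then to run the classical Murasugi argument on the resulting determinants. I would use the reduced Burau representation $\psi$ of virtual braids (as developed earlier for almost classical knots), a homomorphism into $(k-1)\times(k-1)$ matrices over $\ZZ[t^{\pm 1}]$, satisfying, for any $k$-strand virtual braid $\gamma$ with Alexander numberable closure,
\[
\Delta_{\widehat{\gamma}}(t)\cdot f(t)\;\doteq\;\det\!\left(\psi(\gamma)-I\right),\qquad f(t)=\sum_{i=1}^{k}t^{\lambda_i},
\]
where $\lambda_i$ is the Alexander number of the $i$-th strand. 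Setting $M=\psi(\beta)$ and using that $\psi$ is a homomorphism gives $\psi(\beta^{q})=M^{q}$. Since the top strands of $\beta^{q}$ carry the same Alexander numbers as those of $\beta$, the weight $f(t)$ is common to both braids, and therefore
\[
\Delta_{K}(t)\,f(t)\doteq\det(M^{q}-I),\qquad \Delta_{K_*}(t)\,f(t)\doteq\det(M-I).
\]

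First I would establish the integral divisibility (1). Over $\mathbb{Q}(\zeta)$ with $\zeta=e^{2\pi i/q}$, the factorization $x^{q}-1=\prod_{j=0}^{q-1}(x-\zeta^{j})$, applied to the commuting matrix $M$, yields $\det(M^{q}-I)=\prod_{j=0}^{q-1}\det(M-\zeta^{j}I)$. The $j=0$ factor is $\det(M-I)\doteq\Delta_{K_*}(t)f(t)$, while the remaining product $P(t)=\prod_{j=1}^{q-1}\det(M-\zeta^{j}I)$ a priori lies only in $\ZZ[\zeta][t^{\pm 1}]$. However, for any unit $a\in(\ZZ/q)^{\times}$, multiplication by $a$ fixes $0$ and permutes $\{1,\dots,q-1\}$, so the Galois action $\zeta\mapsto\zeta^{a}$ fixes $P(t)$; hence its coefficients are Galois-invariant algebraic integers, i.e.\ lie in $\ZZ[\zeta]\cap\mathbb{Q}=\ZZ$, giving $P(t)\in\ZZ[t^{\pm 1}]$. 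Cancelling the common factor $f(t)$ produces $\Delta_{K}(t)\doteq\Delta_{K_*}(t)\,P(t)$ with $P\in\ZZ[t^{\pm 1}]$, which is exactly (1).

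For the congruence (2) I would reduce modulo $p$ into $\F_p[t^{\pm 1}]$. Because $q=p^{r}$ and the Frobenius is a ring endomorphism under which $M$ and $I$ commute, the freshman's dream gives $M^{q}-I=(M-I)^{q}$ over $\F_p$, whence $\det(M^{q}-I)\equiv\det(M-I)^{q}\equiv\left(\Delta_{K_*}(t)f(t)\right)^{q}\pmod p$. Comparing with $\Delta_{K}(t)f(t)\doteq\det(M^{q}-I)$ and cancelling $f(t)$, a nonzerodivisor in the domain $\F_p[t^{\pm 1}]$, yields $\Delta_{K}(t)\doteq\left(\Delta_{K_*}(t)\right)^{q}f(t)^{q-1}\pmod p$, as claimed. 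As a consistency check, specializing the Alexander numbers to $0,1,\dots,k-1$ recovers $f(t)=1+t+\cdots+t^{k-1}$ and hence Murasugi's original formula in Theorem~\ref{classicalmur}.

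The main obstacle I anticipate is not the displayed algebra but the two geometric inputs it rests on: verifying that the reduced Burau determinant formula genuinely computes $\Delta$ in the virtual almost classical category with the correct weight $f(t)=\sum_i t^{\lambda_i}$, and confirming that $\beta^{q}$ inherits exactly the Alexander numbering of $\beta$, so that $K_*=\widehat{\beta}$ and the factor $f(t)$ is truly shared between the two braids. A secondary technical point is the cancellation of $f(t)$ modulo $p$: one must ensure $f(t)\not\equiv 0$ in $\F_p[t^{\pm 1}]$, which can fail when several strands carry the same Alexander number with total multiplicity divisible by $p$, so handling or ruling out this degenerate configuration is the one place genuinely requiring care.
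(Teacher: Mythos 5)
Your strategy is genuinely different from the paper's and is viable in outline, but it rests almost entirely on the displayed identity $\Delta_{\widehat{\gamma}}(t)\,f(t) \doteq \det\left(\psi(\gamma)-I\right)$ for a reduced Burau representation of almost classical virtual braids, and that identity is precisely where the content of the theorem lives: it is not established in this paper or in the cited literature. What the paper does provide (Remark~\ref{special_presentation}) is the unreduced $k\times k$ presentation matrix $\bar{\Psi}(\beta)-I_k$ of the Alexander module, whose determinant is identically zero; to pass from its $(k-1)\times(k-1)$ minors to a reduced determinant carrying the exact weight $f(t)=\sum_i t^{\lambda_i}$ one needs a left null vector of the presentation matrix with unit entries $t^{\lambda_i}$, and producing that vector from the Alexander numbering is the substance of Proposition~\ref{linearcombo} and Lemma~\ref{LC-method-2} (it is also what makes $\cE_1$ principal, so that a single minor computes $\Delta_K$ at all). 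Your lemma is provable, but proving it amounts to redoing the paper's key technical work, so as written this is a genuine gap rather than a citable input.

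Granting that lemma, the rest of your argument is sound, and it is in fact how the paper treats the parallel statements where an honest determinant is available: Proposition~\ref{prop:KLS-2014} uses exactly your freshman's-dream computation $\det\left(\Psi(\beta)^{p^r}-I\right)=\det\left(\Psi(\beta)-I\right)^{p^r} \bmod p$ for the normalized virtual Alexander polynomial, and Proposition~\ref{divisible-two} is the ideal-theoretic version. For the theorem itself the paper avoids any determinant formula: part (1) follows from Northcott's containment $\cE_\ell(XY)\subset\cE_\ell(X)\,\cE_\ell(Y)$ applied to $A^q-I=(A-I)\sum_{i} A^i$ (Proposition~\ref{divisible-one}, Corollary~\ref{divisible-three}), which needs no cyclotomic fields or Galois descent and yields divisibility for all elementary ideals at once; part (2) is obtained by conjugating the block circulant Jacobian to an upper block triangular matrix over $\F_p$ (Theorem~\ref{thm:jaco}, Corollary~\ref{xinbx-2}) and extracting $(\det\bar{A})^{q}(\widetilde{u}_D)^{q-1}$ by explicit row and column operations, with $\widetilde{u}_D=f(t)$ identified via Lemma~\ref{LC-method-2}. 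Finally, your worry about $f\equiv 0\bmod p$ is easily dispatched: $f(1)=k$, and $\widehat{\beta^{q}}$ can only be a knot if the permutation underlying $\beta^{q}$ is a $k$-cycle, which forces $\gcd(k,p)=1$; hence $f\bmod p$ is nonzero and may be cancelled in the domain $\F_p[t^{\pm1}]$.
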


The main difference between this result and Theorem~\ref{classicalmur} is that the polynomial term $1+t+t^2+ \cdots +t^{k-1}$ in the original statement of Murasugi's theorem has been replaced with the general polynomial $f(t) = \sum_{i=1}^{k}t^{\lambda_i}$.
This factor $f(t)$ can be read from the Alexander numbering on the braid strands once one has realized the periodic virtual knot $K$ as the closure of a periodic virtual braid (Theorem~\ref{braid-rep}). 
In the classical case, if $K$ is the closure of a classical braid $\beta$, then it follows easily that $f(t) = 1+t+t^2+ \cdots +t^{k-1}$. 

This theorem allows us to eliminate certain periods for almost classical knots by testing the Alexander polynomial to see if it can be factored in the desired form after reduction modulo $p$.
For example, in Theorem~\ref{thm:finitely-many-periods} we show that any almost classical knot $K$ with non-trivial $\Delta_K(t)$  is $p$-periodic for at most finitely many primes $p$.
In the case $\Delta_K(t)$ mod $p$ is non-trivial for all primes $p$, Corollary~\ref{cor:finitely-many-periods} provides the stronger conclusion that $K$ can be $q$-periodic for at most finitely many $q$. 
This applies to classical fibered knots, giving a positive answer to Question~\ref{question-1} for such knots.

It also allows us to show that many classical knots do not exhibit additional periods in their virtual knot diagrams.

We extend the techniques described above to eliminate composite periods in many cases.
In Example~\ref{threesix}, we show how to eliminate
 $6= 2 \cdot 3$ as a possible virtual period for the trefoil. 
 More generally, we give criteria that can be used to eliminate periods of the form $2p$ where $p$ is an odd prime, see Propositions~\ref{M_1},~\ref{M_2}.
 
Further applications of Theorem \ref{braid-rep} are given in \S\ref{Alexander_ideals}.
Let  $p$ be a prime and $K$ a $p^r$-periodic virtual knot with quotient knot $K_*$.
Corollary \ref{divisible-four} asserts that
$\left( \Delta^\ell_{K_*}\right)^{p^r} \text{\rm mod } p$  divides 
$\Delta^\ell_K \text{ \rm mod } p$
where $\Delta^\ell_{K}$ is $\ell$-th elementary divisor of the Alexander module of $K$ (and likewise for $\Delta^\ell_{K_*}$).
We show in Proposition \ref{prop:KLS-2014} that
\[
{\widehat H}_K(s,t,q)  = \left[{\widehat H}_{K_*}(s,t,q)\right]^{p^r}  \text{ mod $p$, up to multiplication by a power of $st$,}
\]
where ${\widehat H}_K(s,t,q)$ is the ``normalized virtual Alexander polynomial'' of $K$ introduced in \cite{BDGGHN}.
Under suitable non-vanishing hypotheses on ${\widehat H}_K(s,t,q)$,
we obtain an upper bound on the possible periods of $K$ in terms
of $v(K),$ the virtual crossing number of $K$ (see Proposition \ref{vcrossing_bound}
and Corollary \ref{strong_vcrossing_bound}). This gives an answer to Question \ref{question-1} when ${\widehat H}_K(s,t,q)$ is nontrivial mod $p$.
Note that if $K$ is almost classical, then ${\widehat H}_K(s,t,q) =0$, and so it does not provide any restrictions on the possible periods of $K$.

Although our Theorem~\ref{virtmur-0} applies only to almost classical knots, 
we now explain how to apply parity projection  to obtain constraints on any virtual knot. In the following, let $f$ denote the total Gaussian parity (see Equation \eqref{totalgaussian} on page \pageref{totalgaussian} for its definition).

In general, the parity $f(c_i)$ of a chord in a virtual knot diagram is either even or odd, and it must satisfy Manturov's parity axioms.
The projection $P_f(D)$ of a diagram is obtained by eliminating all the odd chords, and the parity axioms are defined to ensure that if two diagrams $D_1$ and $D_2$ are equivalent through Reidemeister moves, then so are the diagrams $P_f(D_1)$ and $P_f(D_2)$ obtained by projection. It follows that the knot type of $P_f(K)$ is well-defined and independent of the representative diagram for $K$.

Since any virtual knot diagram can have only finitely many chords, for any $D$, there is a positive integer $n$ such that $P_f^{n+1}(D) = P_f^n(D)$ (and therefore, for any $m>n$, the diagram will remain the same, so we have $P_f^{m}(D) = P_f^n(D)$).
Stable Manturov projection is defined as $P_f^\infty(K) = \lim_{n \to \infty} P_f^n$, that is, if $\ell$ is the smallest $n$ such that $P_f^{n+1}(D) = P_f^n(D)$,
then $P_f^\infty(K) =  P_f^{\ell}(D)$.
It is a general fact that the image $P_f^\infty(K)$ is an almost classical diagram for any virtual knot diagram $K$, where $f$ is the total Gaussian parity.

\begin{theorem}
If  $K$ is a $q$-periodic virtual knot diagram, then $\bar{K}= P_f^\infty(K)$ is a $q$-periodic almost classical  diagram.
\end{theorem}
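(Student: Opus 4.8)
The plan is to establish the two assertions---that $\bar{K} = P_f^\infty(K)$ is almost classical and that it is $q$-periodic---separately. The first is immediate, since it is recorded just above the statement that the stable projection of any virtual knot diagram along the total Gaussian parity is almost classical. The content of the theorem is therefore to show that periodicity survives the projection process, and the strategy is to prove that each single projection $P_f$ sends a $q$-periodic diagram to a $q$-periodic diagram and then to pass to the stable limit.

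First I would fix a $q$-periodic diagram $D$ representing $K$: by definition $D$ misses the origin and is carried to itself by the rotation $\rho$ of the plane through angle $2\pi/q$. Since $\rho$ has order exactly $q$ and every crossing of $D$ lies off the origin, $\rho$ permutes the classical crossings---equivalently the chords of the associated Gauss diagram---in free orbits, each of size $q$. The crux is that the total Gaussian parity $f$ of \eqref{totalgaussian} is a function of the abstract Gauss data alone, so any symmetry of that combinatorial structure preserves it; in particular $f(\rho(c)) = f(c)$ for every chord $c$. Consequently the set of odd chords of $D$ is a union of $\rho$-orbits, and so is itself $\rho$-invariant.

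Next I would observe that $P_f$ is carried out by a local modification at each odd crossing. Because the odd crossings form a $\rho$-invariant set, these modifications can be performed equivariantly---simultaneously on each orbit---so that the output $P_f(D)$ again misses the origin and is invariant under $\rho$; that is, $P_f(D)$ is $q$-periodic. Iterating, every $P_f^n(D)$ is $q$-periodic, and since the sequence stabilizes after finitely many steps (as noted in the excerpt), the limit $P_f^\infty(D) = \bar{K}$ is $q$-periodic as well. Combining this with the first paragraph gives the theorem.

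The hard part will be the two equivariance checks underlying the middle paragraph. For the parity, one must verify from the definition in \eqref{totalgaussian} that $f$ depends only on the combinatorial chord data and not on the planar placement of the diagram, so that the crossing-permutation induced by $\rho$ genuinely respects parities; a parity that secretly encoded positional information could fail here. For the projection, one must confirm that deleting an entire $\rho$-orbit of odd crossings, rather than a single crossing in isolation, yields a well-defined diagram whose local pictures patch together into a globally $\rho$-symmetric one. Once both points are in hand, the induction on $n$ and the passage to the stable limit are routine.
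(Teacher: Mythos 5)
Your proposal is correct and follows essentially the same route as the paper: the paper's Lemma~\ref{Lemma-3-2-2} establishes that the chord index, hence the total Gaussian parity, is constant on $\ZZ/q$-orbits (via a covering-map computation on Gauss diagrams, matching your observation that $f$ depends only on the combinatorial chord data), Lemma~\ref{Lemma-3-2-3} then performs the equivariant replacement of odd crossings by virtual crossings to show $P_f$ preserves $q$-periodicity, and Proposition~\ref{prop-ManProj} iterates to the stable limit exactly as you do.
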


Hence we can apply Theorem~\ref{virtmur-0} to $\bar{K}$  to give conditions that must be satisfied in order for $K$ to be periodic.  For instance, any virtual knot $K$ whose projection $\bar{K}= P_{f}^\infty(K)$ has nontrivial Alexander polynomial is $p$-periodic for only finitely many primes $p$.

Table~\ref{tab:periods} lists the known periods and excluded periods for almost classical knots up to 6 crossings. 
This table was obtained by applying Theorem~\ref{virtmur-0} to their Alexander polynomials, which were computed in \cite{AC} and are listed here in Table~\ref{acks2}.
A table of Gauss diagrams of almost classical knots with up to six crossings can be found in \cite{AC}.
In Table~\ref{tab:periodic-braids} we list the known periodic almost classical knots as closures of periodic virtual braids.

The main results in this paper are derived from the Ph.D.~thesis of Lindsay White, \cite{White},  
written under the supervision of Hans Boden and Andrew Nicas at McMaster University.

\bigskip
\noindent
{\it Acknowledgements.}  H.~Boden and A.~Nicas were supported by grants from the Natural Sciences and Engineering Research Council of Canada.


\section{Preliminaries} \label{sec:Preliminaries}

In this section, we recall some basic notions from knot theory and virtual knot theory.

\subsection{Classical knots and Gauss diagrams}
A \emph{knot} $K$ is a smooth embedding $S^1 \to S^3$ of the circle into 3-space. Two knots are considered \emph{equivalent} if there is an ambient isotopy of $S^3$ taking one to the other.
More generally, a \emph{link} is a smooth embedding $S^1 \cup \cdots \cup S^1  
\to S^3$ of a disjoint union of circles, up to ambient isotopy of $S^3$ taking one to another.

\begin{wrapfigure}{r}{0.42\textwidth}
\centering
\includegraphics[scale=01.00]{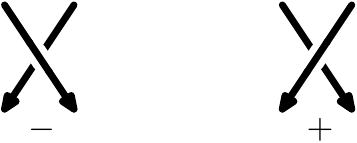} 
\vspace{-1mm}
\caption{Left and right handed crossings.}\label{local-writhe}
\end{wrapfigure}

We will usually work with oriented knots and links, and when necessary we indicate the choice of orientation using an arrow.  



A \emph{knot diagram} is the regular projection of a knot $K$ to $\RR^2$ with only double-points, which are drawn to indicate the overcrossing and undercrossing strands. Two knot diagrams are \emph{equivalent} if one can be transformed into the other through a series of Reidemeister moves and planar isotopies.

The \emph{Gauss code} of a knot diagram is a word that records the crossings and their signs as one traverses the knot. To begin, number the crossings ${\tt 1, 2, \ldots, n}$ arbitrarily and pick a basepoint on the knot. Then traverse the knot and record each crossing as it is encountered along with its sign,  which is positive if the crossing is right-handed and negative  if it is left-handed (see Figure~\ref{local-writhe}). Each crossing will be recorded twice, once as an over-crossing (written {\tt Oi}) and then as an under-crossing (written {\tt Ui}). For example, the trefoil in Figure~\ref{Fig:trefoil} has Gauss code {\tt O1+U2+O3+U1+O2+U3+}.

\begin{wrapfigure}{r}{0.42\textwidth}
\centering
\includegraphics[scale=0.85]{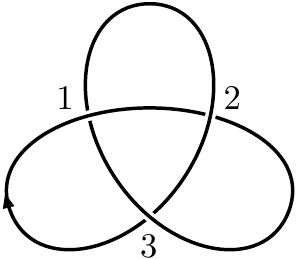}
\vspace{-1mm}
\caption{The trefoil knot $3_1$.}\label{Fig:trefoil}
\end{wrapfigure}

The Gauss code is determined by the oriented knot up to relabeling of the crossings and altering the choice of basepoint. A relabeling of the crossings amounts to permuting the numbers ${\tt 1, 2, \ldots, n}$ within the Gauss code, and altering the choice of basepoint amounts to a cyclic permutation of the Gauss code. 

A \emph{Gauss diagram} is a trivalent graph consisting of a base circle, which represents the underlying knot, along with directed chords $c_1,\ldots, c_n$, one for each crossing. The $i$-th chord $c_i$ points from the over-crossing arc to the under-crossing arc, and its writhe, $\ep_i = \pm 1$, is given by the sign of the $i$-th crossing. 

The Reidemeister moves can be translated into moves between Gauss diagrams, 
and in this way one can regard a classical knot as an equivalence class of Gauss diagrams. Every classical knot diagram is uniquely determined by its associated Gauss diagram, but not all Gauss diagrams correspond to classical knots. 


 
\subsection{Virtual knots}\label{virtualknots}

Virtual knot theory was invented by Kauffman \cite{KVKT}, and virtual knots represent the complete set of all Gauss diagrams modulo Reidemeister moves. As with classical knots, virtual knots can be represented in terms of virtual knot diagrams, which are described next.

\begin{wrapfigure}{r}{0.60\textwidth}
\centering
\includegraphics[scale=0.80]{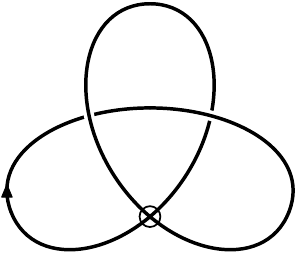} \qquad \includegraphics[scale=0.70]{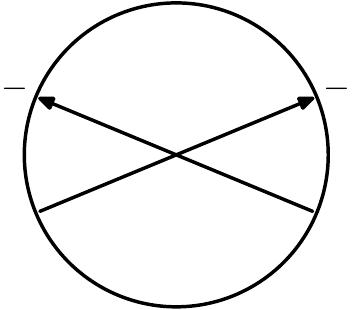}
\caption{The virtual trefoil and its Gauss diagram.}
\label{2-1}
\end{wrapfigure}

A \emph{virtual knot diagram} is an immersion of a circle in the plane with only double points, such that each double point is either classical (indicated by over- and under-crossings) or virtual (indicated by a circle). \emph{Virtual link diagrams} are defined similarly. Such a diagram is \emph{oriented} if every component has an orientation.

Two oriented virtual link diagrams are \emph{virtually isotopic} (or \emph{equivalent}) if they can be related by planar isotopies and a series of \emph{Reidemeister moves} 
and the \emph{detour move} in Figure~\ref{fig:detour}.

Virtual isotopy defines an equivalence relation on virtual link diagrams, and a \emph{virtual knot or link} is defined to be an equivalence class of virtual knot or link diagrams under virtual isotopy.


\begin{wrapfigure}{r}{0.54\textwidth} 
\centering
\includegraphics[width=0.50\textwidth]{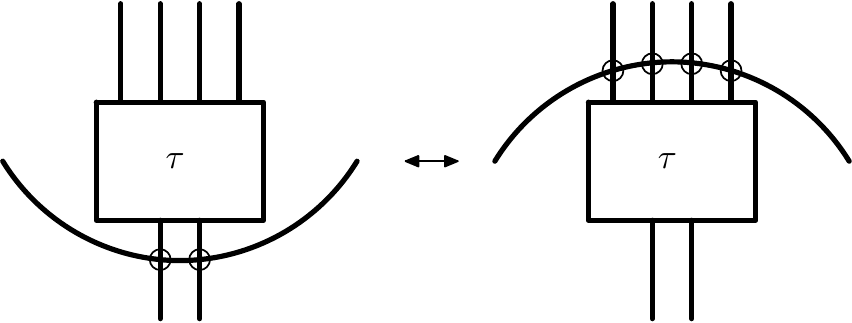}
\caption{Detour  move. \newline }\label{fig:detour}
\end{wrapfigure}

In this paper, we refer to various knots (up to virtual equivalence) by labelling them with a decimal number (for example, $6.90099$), which comes from the enumeration by \cite{Green}. The number before the decimal refers to the real crossing number of the knot (that is, the minimum number of real crossings in an equivalent diagram of the knot).

Just as with classical knot diagrams, every virtual knot diagram determines a Gauss code and Gauss diagram, either of which uniquely determines the virtual knot diagram. Indeed, an alternative but equivalent way to define virtual knots is as equivalence classes of  Gauss diagrams by Reidemeister moves, as proved by Goussarov, Polyak, and Viro in \cite{GPV}.

\begin{figure}[ht]
\centering
\includegraphics[scale=0.80]{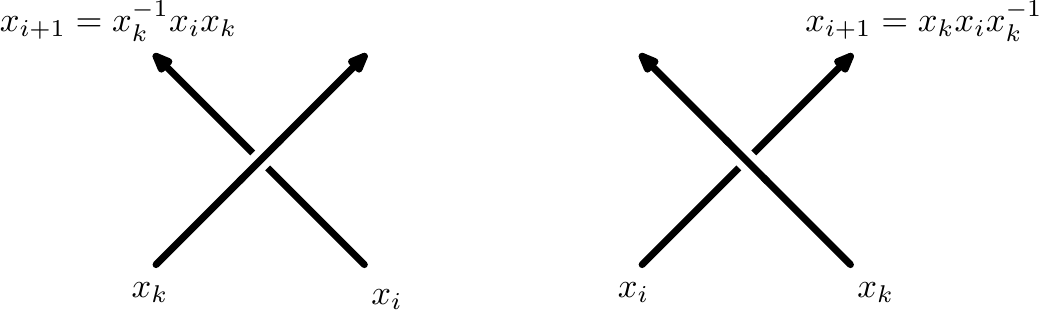}
\caption{The relations in $G_K$ from the $i$-th crossing of $K$.}
\label{Knot-Group-Wirtinger}
\end{figure}

\subsection{The knot group and Alexander invariants}
\label{knot_and_alex}

\begin{definition}
\label{knot_group}
Suppose $K$ is an oriented virtual knot with $n$ classical crossings, and choose a basepoint on $K$. Starting at the basepoint, we label the arcs $x_1, x_2, \ldots, x_n$ so that at each under-crossing, $x_i$ is the incoming arc and $x_{i+1}$ is the outgoing arc. (If $i=n$, then we set $i+1 := 1$; that is, we take $i$ modulo $n$.) We use a consistent labeling of the crossings so that the $i$-th crossing is as shown in 
Figure~\ref{Knot-Group-Wirtinger}.
For $i=1,\ldots, ,n$ let $\ep_i=\pm 1$ be according to the sign of the $i$-th crossing.
Then the \emph{knot group} of $K$ is the finitely presented group given by

\begin{equation} \label{gp-pres}
G_K = \langle x_1, \ldots, x_n \mid x_{i+1} = x_k^{-\ep_i} x_i x_k^{\ep_i}, i=1, \ldots, n \rangle.
\end{equation}
Note that virtual crossings are ignored in this construction.
\end{definition}

\begin{wrapfigure}{r}{0.5\textwidth}
\centering
\includegraphics[scale=0.90]{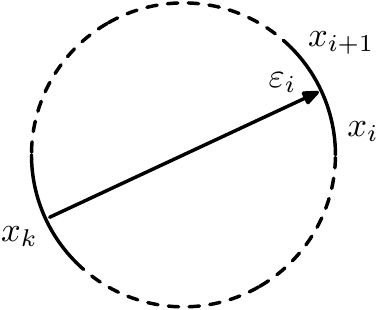} 
\caption{The chord $c_i$ has Wirtinger relation $r_i=  x_k^{-\ep_i} x_i x_{k}^{\ep_i} x_{i+1}^{-1}$.}
\label{GD-crossing}
\end{wrapfigure}

As explained by S.G. Kim \cite{Kim}, the Wirtinger presentation of the knot group $G_K$ can also be easily read from a Gauss diagram $D$ for $K$ as follows. 
Pick a basepoint on $D$ and number the chords $c_1, \ldots, c_n$ sequentially in the order in which one encounters their arrowheads when going around $D$ counterclockwise. The long arcs of $D$ are the subarcs of $D$ from one arrowhead to the next, and we label them $x_1, \ldots, x_n$ sequentially so that $x_i$ and $x_{i+1}$ are separated by the arrowhead of $c_i$ (here $i$ is taken modulo $n$). Then the knot group admits the presentation
\begin{equation} \label{gp-pres-2}
G_K = \langle x_1,\ldots, x_n \mid r_1, \ldots, r_n \rangle,
\end{equation}  
where the relation $r_i$ arises from the chord $c_i$ as follows. If the arrowtail of $c_i$ lies on the arc labeled by $x_k$, then $r_i$ is the relation $ x_k^{-\ep_i} x_i x_{k}^{\ep_i} x_{i+1}^{-1},$ where $\ep_i$ is the writhe of $c_i$ and $i$ is taken modulo $n$
(cf.~Figure~\ref{GD-crossing}).

The knot group $G_K$ is an invariant of virtual isotopy. (In fact, it is an invariant of the underlying welded knot type of $K$). In case $K$ is classical, we have  $G_K \cong \pi_1(S^3 - N(K)),$ the fundamental group of the complement of $K$.


We recall the construction of the Alexander module associated to the knot group $G_K$ of a virtual knot $K$. Let $G_K' =[G_K,G_K]$ and $G_K'' = [G_K',G_K']$ be the first and second commutator subgroups. The Alexander module is then the quotient $G_K' / G_K''.$ It is a finitely generated module over $\ZZ[t^{\pm 1}]$, the ring of Laurent polynomials, and it is determined by the $n \times n$ Jacobian matrix obtained by Fox differentiating the relations $r_i$ (appearing in the presentation \eqref{gp-pres-2} of $G_K$) with respect to the generators $x_j$ and applying the abelianization map $x_\ell \mapsto t$ for $\ell = 1,\ldots, n$.
While the matrix $A$ will depend on the choice of presentation for $G_K$,  the associated sequence of {\it elementary ideals}
\begin{equation}\label{chain}
 (0)= \cE_0 \subset \cE_1 \subset \cdots \subset \cE_n  = \ZZ[t^{\pm 1}]
 \end{equation}
does not. Here, the $k$-th elementary ideal $\cE_k$ is defined as the ideal of $\ZZ[t^{\pm 1}]$ generated by all $(n-k) \times (n-k)$ minors of $A$. 
The ideals in the  chain \eqref{chain}, also known as the {\it Alexander ideals} of $K$, are knot invariants.

We now describe the standard method for deriving a presentation matrix for the Alexander module from the Wirtinger presentation \eqref{gp-pres} of the virtual knot.  
As before, we assume that $K$ is a virtual knot with $n$ real crossings $c_1, \ldots, c_n$ and long arcs $x_1, \ldots, x_n$ such that $x_i$ starts at the under-crossing of $c_{i-1}$ and ends at the under-crossing of $c_i$. 
 
 The Fox derivatives of the relations $r_i$ are given by
$$
\frac{\partial r_i}{\partial x_{j}} = \left \{
\begin{array}{ll}
x_k^{-\ep_i} & \text{if $j = i,$} \\
-x_k^{-\ep_i} x_i x_k^{\ep_i} x_{i+1}^{-1} & \text{if $j = i+1,$} \\
1 - x_k x_i x_k^{-1} & \text{if $j =k$ and $\ep_i = -1,$} \\
-x_k^{-1}+ x_k^{-1} x_i & \text{if $j = k$ and $\ep_i = 1,$} \\
0 & \text{otherwise.}
\end{array}
\right .
$$

\begin{definition} \label{method-1} 
The \emph{Jacobian matrix} $A=A(D)$ is the $n \times n$ matrix with
$$A_{i,j} = \left.\frac{\partial r_i}{\partial x_j} \right|_{x_1,\ldots, x_n=t}$$ given by Fox differentiation and applying the abelianization map $G_K \to \ZZ$ sending $x_\ell \mapsto t$ for $\ell =1,\ldots, n.$ 
More concretely, if the $i$-th crossing is as in Figure~\ref{GD-crossing}, then

\begin{equation} \label{eq-method1}
A_{i,j} = \begin{cases} 
t^{-\ep_i} & \text{if $j=i$,} \\
-1 & \text{if $j=i+1$,} \\
1-t^{-\ep_i}& \text{if $j=k$,} \\
0 & \text{otherwise}.
\end{cases}
\end{equation}
\end{definition}
For both classical and virtual knots, the zeroth elementary ideal $\cE_0$ is always trivial. This follows from the observation that $\cE_0 = (\det(A)) = 0$, since
the sum of the columns of $A$ is zero by the fundamental identity of Fox derivatives. 

For a classical knot $K$, the first elementary ideal $\cE_1$ is always principal and generated by the \emph{Alexander polynomial} $\Delta_K$, which is well-defined up to multiplication by $\pm t^k$ and satisfies $\cE_1 = (\Delta_K(t))$. It is obtained by taking the determinant of the Alexander matrix, which is the $(n-1) \times (n-1)$ matrix obtained by removing a row and column from $A$.

The first elementary ideal $\cE_1$ of a virtual knot $K$ is not always principal.
Nevertheless, one can define  the  Alexander polynomial $\Delta_K(t)$  to be the generator of the smallest principal ideal containing $\cE_1$,
obtained by taking the greatest common divisor of all the $(n-1) \times (n-1)$ minors of $A$.


%



\subsection{Almost classical knots and parity}

\begin{definition}[Almost Classical Knots and  Links] \label{AN-Def}
$  $\newline       
\vspace{-15pt}   
\begin{enumerate}
\item[(i)] A virtual knot diagram is  \emph{almost classical} if there exists an integer-valued function $\lambda$ on the set of short arcs satisfying the relations in Figure~\ref{Alexander-Numbering-Def-2} at each crossing. (This definition extends naturally to virtual links.) 
Such diagrams are also called \emph{Alexander numberable}.
\item[(ii)] Given a virtual knot or link $K$, we say $K$ is \emph{almost classical} if it is represented by an almost classical diagram.
\end{enumerate}
\end{definition}

\begin{figure}[ht]
\centering
\includegraphics[scale=0.80]{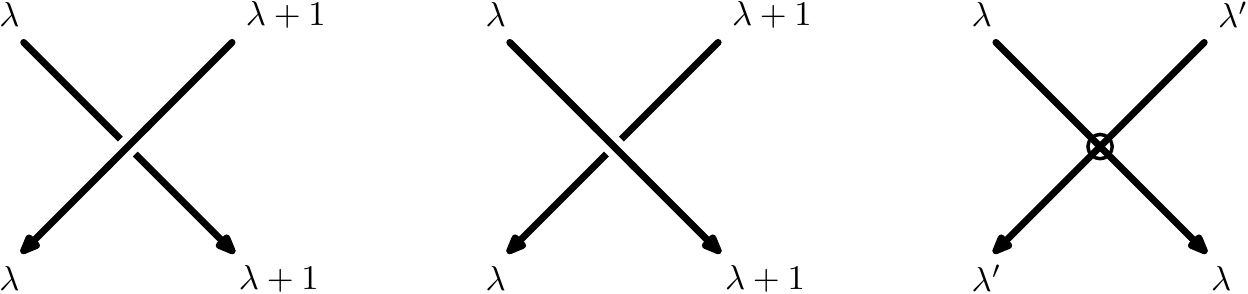}
\caption{The Alexander numbering conditions.}
\label{Alexander-Numbering-Def-2}
\end{figure}

\begin{definition} \label{chord-index}
If $D$ is a Gauss diagram with chords $c_1, \ldots, c_n$, we define
the \emph{index} of $c_i$ by counting the chords $c_j$ that intersect $c_i$ with sign and keeping track of their direction.  Specifically, position the diagram so that $c_i$ points vertically, and set 
$$I(c_i)=r_{+} - r_{-} + \ell_{-} -\ell_{+},$$ where
\begin{eqnarray*}
&r_\pm = \#\{ c_j \mid \text{$c_j$ intersects $c_i$ with $\ep_j = \pm 1$ and  arrowhead to the right}\},& \\
&\ell_\pm = \#\{ c_j \mid \text{$c_j$ intersects $c_i$ with $\ep_j = \pm 1$ and arrowhead to the left} \}. \\
\end{eqnarray*}
\end{definition}

For example, the Gauss diagram in Figure~\ref{2-1} has one chord with index $1$ and another with index $-1$.


One can verify that a Gauss diagram $D$ represents an almost classical virtual knot diagram if and only if every chord $c_i$ of $D$ has index $I(c_i) = 0$. 

Almost classicality in virtual knot theory is closely related to Gaussian parity, and here we give a brief account. Parity is an important topic in virtual knot theory, and here we will only scratch the surface. For more information, we refer the reader to Manturov's original article \cite{Manturov-13}, his book \cite{State}, and the monograph \cite{IMN-11}.

Given a virtual knot diagram, a parity is a function that assigns to each classical crossing a value in $\{0,1\}$ (or ``even" and ``odd") such that the following axioms hold: 

\begin{itemize}
\item[1.] In a Reidemeister one move, the parity of the crossing is even.  
\item[2.] In a Reidemeister two move, the parities of the two crossings are either both even or both odd. 
\item[3.] In a Reidemeister three move, the parities of the three crossings are unchanged. Further, the three crossings can be all even, all odd, or one even and two odd. (In other words, we exclude the case that one crossing is odd and two are even.) 
\end{itemize}

Note that this is the definition of ``parity in the weak sense," cf. Manturov \cite{Manturov-13} and Nikonov \cite{Nikonov-2016}.

For example, taking
\begin{equation}\label{totalgaussian}
f(c_i)= \begin{cases} 0 & \text{if $I(c_i) = 0$}\\
1 & \text{if $I(c_i) \neq 0$}
\end{cases}
\end{equation}
gives a parity that we call the {\it total Gaussian parity}. 

One can easily verify that Gaussian parity satisfies the parity axioms, and we leave the details to the reader.

Notice that a diagram $D$ has only even chords  if and only if $I(c_i) = 0$ for all chords (by definition of $f$). This is equivalent to the condition that $D$ admit an Alexander numbering.

There is a map  $$P_f \colon \{ \text{Gauss diagrams} \} \longrightarrow \{\text{Gauss diagrams}\}$$
called \emph{Manturov projection} which is defined by removing the odd chords of $D$.
Thus, if all chords of $D$ are even 
then $P_f(D)=D.$ Otherwise, if $P_f(D) \neq D$ then $D$ contains one or more odd chords and does not admit an Alexander numbering. Its projection $P_f(D)$ will then be a diagram with fewer chords, but because removal of chords may alter the parity of the surviving chords, the new diagram $P_f(D)$ may contain odd chords. 

Repeated application of $P_f$ will eventually give a diagram without odd chords. In fact, 
for some some $n \geq 0$ the projection $P_f^{n}$ stabilizes in that $P_f^{n+1}(D) = P_f^{n}(D)$. The resulting diagram $\bar{D} = P_f^{n}(D)$ is has only even chords and hence is almost classical. We define $P_f^{\infty} = {\displaystyle \lim_{n \to \infty}} P_f^n$ and call it
the stable Manturov projection.

In summary, we have shown that for any virtual knot diagram $D$, its image $P_f^{\infty}(D)$ under stable Manturov projection admits an Alexander numbering and therefore is an almost classical virtual knot.

Although Manturov projection $P_f$ is defined at the level of diagrams,
the next proposition implies that it is well-defined on virtual knots. The proof is an immediate consequence of the parity axioms, and for details we refer to either \cite{Manturov-13} or \cite{Nikonov-2016}.

\begin{proposition}\label{Prop-Projection}
If two virtual knot diagrams $K$ and $K'$ are virtually isotopic, then so are their images $P_f(K)$ and $P_f(K')$ under Manturov projection.
\end{proposition}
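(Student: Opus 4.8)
The plan is to reduce to a single Reidemeister move and then check each move type directly against the parity axioms. Virtual isotopy of Gauss diagrams is generated by the moves R1, R2, R3 together with detour moves, and a detour move alters only virtual crossings; since virtual crossings contribute no chords to a Gauss diagram and are invisible to $P_f$, such moves are transparent to the projection. It therefore suffices to prove the statement when $K'$ is obtained from $K$ by a single move of type R1, R2, or R3. The general case follows by induction on the length of a sequence of moves connecting $K$ and $K'$, applying the single-move case one step at a time and using transitivity of virtual isotopy.

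First I would dispose of R1 and R2, which follow immediately from Axioms 1 and 2. In an R1 move the lone crossing $c$ is even by Axiom 1, so it survives projection; since a parity leaves the values on all crossings not involved in a move unchanged, $P_f(K')$ is obtained from $P_f(K)$ by the very same R1 move, and the two projected diagrams are virtually isotopic. In an R2 move the two crossings share a common parity by Axiom 2. If both are even they survive, and $P_f(K')$ differs from $P_f(K)$ by an R2 move; if both are odd they are deleted by $P_f$, and as they are the only chords distinguishing $K$ from $K'$ while the parities of the remaining chords are unaffected, one gets $P_f(K') = P_f(K)$. In either case the projected diagrams are equivalent.

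The heart of the argument is R3. By Axiom 3 the three crossings keep their parities across the move, and the admissible patterns are all-even, all-odd, or exactly one even and two odd, while the pattern of exactly one odd crossing is forbidden. When all three are even they survive and $P_f(K')$ is obtained from $P_f(K)$ by an R3 move; when all three are odd they are deleted from both diagrams, giving $P_f(K') = P_f(K)$. The decisive case is one even chord and two odd chords: here projection deletes the two odd chords and keeps the single even one, and I would argue that once the two odd strands are removed, the R3 reconfiguration becomes a detour move (equivalently a planar/virtual isotopy) on the surviving even chord, so that $P_f(K')$ and $P_f(K)$ are again virtually isotopic.

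I expect the mixed-parity R3 case to be the main obstacle, and the exclusion in Axiom 3 is exactly what renders it tractable: the forbidden pattern (two even, one odd) would leave projection retaining two chords whose R3 rearrangement is not realizable by any single move on those two chords alone, destroying well-definedness. The one computation needing care is to track how the R3 move permutes the endpoints on the base circle of the surviving even chord relative to all chords not involved in the move, and to verify that this local rearrangement is absorbed by a detour move; the R1, R2, and the all-even/all-odd R3 cases are essentially immediate from the axioms.
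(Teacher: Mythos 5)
Your argument is correct and is exactly the move-by-move verification against the parity axioms that the paper has in mind: the paper does not write out a proof of Proposition~\ref{Prop-Projection} itself, calling it ``an immediate consequence of the parity axioms'' and deferring the details to Manturov and Nikonov, and your case analysis (detour moves invisible to $P_f$; R1 and R2 via Axioms 1--2; R3 split into all-even, all-odd, and the one-even/two-odd case where the surviving chord's endpoint rearrangement is absorbed after deletion) is precisely that standard argument. The only point worth flagging is that your R2 and mixed-R3 cases quietly use the fact that crossings not participating in a move retain their parity, which is part of the full parity framework even though the paper's abbreviated axiom list does not state it explicitly.
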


Next, we recall that if $K$ is an almost classical knot, then its first elementary ideal $\cE_1$ is principal. This result was proved by Nakamura, Nakanishi, Satoh, and Tomiyama in \cite[Theorem 1.2]{Nakamura-et-al} by using an Alexander numbering to determine a linear combination of the rows of the Jacobian matrix that sum to zero. Because it is central to our later results, we will go through the argument carefully.

\begin{proposition}\label{linearcombo}
If $K$ is an almost classical knot or link, then its first elementary ideal $\cE_1$ is principal.
\end{proposition}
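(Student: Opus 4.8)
The plan is to realize $\cE_1$ as a principal ideal by combining the column relation already noted in the text with a second, \emph{weighted row relation} supplied by the Alexander numbering, and then extracting the conclusion from a rank-one analysis of the adjugate of the Jacobian matrix. Let $A = A(D)$ be the $n \times n$ Jacobian matrix \eqref{eq-method1} of an Alexander numberable diagram $D$ for $K$, regarded over $R = \ZZ[t^{\pm 1}]$, and let $F = \mathbb{Q}(t)$ be its field of fractions. Since $\det A = 0$, if $\operatorname{rank} A \le n-2$ then every $(n-1)\times(n-1)$ minor vanishes, so $\cE_1 = (0)$ is principal and there is nothing to prove. Hence I may assume $\operatorname{rank} A = n-1$, so that both the right kernel and the left kernel of $A$ are one-dimensional over $F$.

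The two null vectors are the crux. The fundamental identity of Fox calculus gives $A\,\mathbf{1} = 0$ (the columns sum to zero), so the right kernel is spanned by the all-ones vector $\mathbf{1}$, whose entries are units. The heart of the proof is to build a \emph{left} null vector with unit entries from the Alexander numbering: setting $r_i = t^{\lambda_i}$, where $\lambda_i$ is the Alexander number of the arc $x_i$, I claim that $\sum_i t^{\lambda_i} A_{ij} = 0$ for every $j$. By \eqref{eq-method1} the $j$-th equation reads
$$
t^{\lambda_j - \ep_j} - t^{\lambda_{j-1}} + \sum_{i \,:\, k(i)=j} t^{\lambda_i}\bigl(1 - t^{-\ep_i}\bigr) = 0,
$$
and I would verify it by translating the numbering conditions of Figure~\ref{Alexander-Numbering-Def-2} at each crossing into relations among $\lambda_i$, $\lambda_{i+1}$, and $\lambda_{k(i)}$, then checking that the contributions to each column cancel. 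This local-to-global bookkeeping is exactly the argument of \cite{Nakamura-et-al}, and I expect it to be the main obstacle: one must match the over-arc contributions $1 - t^{-\ep_i}$ against the under-arc contributions using the numbering, while keeping careful track of orientations and crossing signs.

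With both null vectors in hand, the conclusion is formal. From $\det A = 0$ we have $A\,\operatorname{adj}(A) = \operatorname{adj}(A)\,A = 0$, so every column of $\operatorname{adj}(A)$ lies in $\ker A = F\cdot\mathbf{1}$ and every row lies in the left kernel $F\cdot\mathbf{r}$. Comparing these two descriptions of the entries forces $\operatorname{adj}(A)_{ij} = \gamma\, r_j = \gamma\, t^{\lambda_j}$ for a single scalar $\gamma$; since each entry of $\operatorname{adj}(A)$ lies in $R$ and $t^{\lambda_j}$ is a unit, in fact $\gamma \in R$. Finally, the $(n-1)\times(n-1)$ minors of $A$ are, up to sign, exactly the entries of $\operatorname{adj}(A)$, hence each equals $\pm\gamma\, t^{\lambda_j}$, an associate of $\gamma$. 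Therefore $\cE_1$ is generated by the single element $\gamma$ and is principal. I would close by remarking that this computation also identifies the generator explicitly, which is convenient for the Alexander-polynomial applications developed later in the paper.
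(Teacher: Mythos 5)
Your proof is, at its core, the paper's proof: the entire content lies in producing a left null vector of the Jacobian matrix $A$ whose entries are units of $\ZZ[t^{\pm 1}]$, read off from the Alexander numbering, and this is exactly Claim~\ref{LC-method-1} in the text (following \cite{Nakamura-et-al}). Two remarks. First, you assert that identity rather than prove it; the paper's verification shows that the correct weights are $\vartheta_i = t^{\lambda_i}$ or $t^{\lambda_i+1}$ according to the sign $\ep_i$ of the crossing (see Figure~\ref{AN-at-i}), so your uniform choice $r_i = t^{\lambda_i}$ would need that sign-dependent unit correction for the column-by-column cancellation you write down to actually close up. This does not harm your argument, since all it needs is the \emph{existence} of a left null vector with unit entries. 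Second, your endgame is packaged differently but equivalently: the paper concludes directly that unit-coefficient row and column dependencies make all $(n-1)\times(n-1)$ minors associates of one another, whereas you extract the same conclusion from the rank-one factorization $\operatorname{adj}(A)_{ij} = \gamma\, r_j$ forced by $A\,\operatorname{adj}(A)=\operatorname{adj}(A)\,A=0$. Your version is slightly cleaner in that it handles the degenerate case $\operatorname{rank} A \le n-2$ explicitly and exhibits the generator $\gamma$ of $\cE_1$ as a ratio $\operatorname{adj}(A)_{ij}/r_j$, but it buys nothing new mathematically; the hard step is the same in both, and it is the one you deferred.
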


\begin{proof} 
Let $A= \left( \left. { \frac{\partial r_i}{\partial x_j}} \right|_{x_1,\ldots, x_n=t}\right)$ be the Jacobian matrix of Definition~\ref{method-1} and $\cE_1$ the ideal generated by $(n-1) \times (n-1)$ submatrices of $A$.
Then equation  \eqref{eq-method1} implies that    
\begin{equation}\label{Alexander-Matrix}
A_{i,j} = \left. \frac{\partial r_i}{\partial x_{j}} \right|_{x_1,\ldots, x_n=t}   = \left \{
\begin{array}{ll}
t^{-\ep_i} & \text{if $j = i,$} \\
-1 & \text{if $j = i+1,$} \\
1 - t^{-\ep_i} & \text{if $j = k,$} \\
0 & \text{otherwise.} 
\end{array}
\right.
\end{equation}

Let $A_{i,*}$ denote the $i$-th row of $A$, 
and set
$$ \vartheta_i = 
\begin{cases}
t^{\lambda_i} & \text{if $\ep_i = -1,$} \\
t^{\lambda_i+1} & \text{if  $\ep_i = +1.$}
\end{cases}
$$
where $\lambda_i$ and $\lambda_{i+1}$ are the two Alexander numbers showing up at the crossing $c_i$ (see Figure~\ref{AN-at-i}).
Notice that $\vartheta_i$ is a unit in $\ZZ[t^{\pm 1}]$ for $i=1,\ldots, n.$

\begin{claim} \label{LC-method-1}
We have ${\displaystyle \sum_{i=1}^{n}} \vartheta_i A_{i,*} = 0$.
\end{claim}

To prove the claim, we compute  
\begin{eqnarray*}
\vartheta_i{A_{i,*}} &=& 
\begin{cases}
(0, \ldots, 0, t^{\lambda_i},- t^{\lambda_i+1},0,\ldots, 0,t^{\lambda_i+1}- t^{\lambda_i},0,\ldots,0)  & \text{if $\ep_i = +1$,}\\
(0, \ldots, 0, t^{\lambda_i+1}, - t^{\lambda_i},0,\ldots, 0, t^{\lambda_i}-t^{\lambda_i+1},0,\ldots, 0)  & \text{if $\ep_i = -1$.} 
\end{cases}
\end{eqnarray*}

Recall that the row $A_{i,*}$ corresponds to the crossing $c_i$ of $K$ with incoming underarc $x_i$, outgoing underarc $x_{i+1}$, and overar $x_k$. 
In the case $\ep_i = +1$, the $t^{\lambda_i}$ term in row $i$ corresponds to the incoming underarc $x_i$; $-t^{\lambda_i + 1}$ corresponds to the outgoing underarc $x_{i+1}$, and $t^{\lambda_i + 1}-t^{\lambda_i}$ corresponds to the overarc $x_k$. Similar considerations apply in the case $\ep_i = -1$ case.

\begin{figure}[h]
\begin{center}
\includegraphics[scale=0.80]{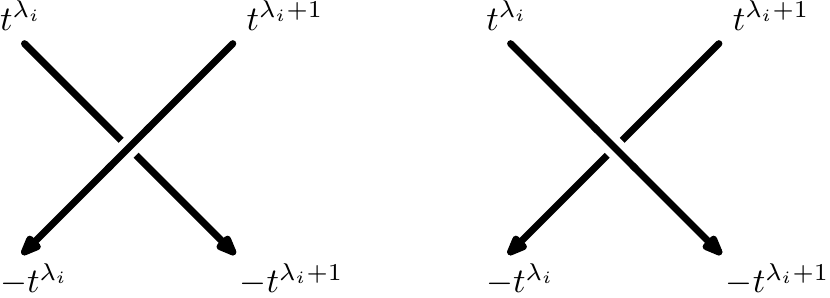}
\end{center}
\caption{Arc labels at the $i$-th crossing in terms of Alexander numbers.} \label{AN-at-i}
\end{figure}

Notice that the incoming arcs have labels with positive signs, and the outgoing arcs have labels with negative signs.
In the linear combination $\sum_{i=1}^{n} \vartheta_i A_{i,*}$, the $j$-th entry is given as the sum of all terms in the $j$-th column (each multiplied by a $\vartheta_i$), namely all the terms as above contributed by the arc $x_j$. This includes terms for which $x_j$ is the outgoing underarc, incoming underarc, or overarc, and those terms are given by multiplying one of $-1, t^{- \ep_i}, 1-t^{- \ep_i}$ as in \eqref{eq-method1} with the coefficient $\vartheta_i$ as above. This is the same as summing up all the labels as in Figure~\ref{AN-at-i} as you move across the arc $x_j$. Of course, the $\lambda_i$ term corresponds to the Alexander numbering of the arcs.

\begin{figure}[h]
\begin{center}
\includegraphics[scale=0.85]{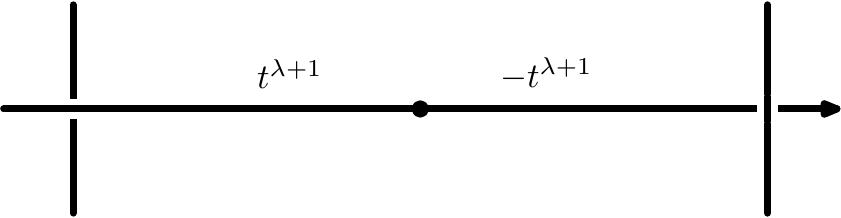}
\end{center}
\caption{Two consecutive Alexander numbers.} \label{cons-AN}
\end{figure}

Now, on a given arc $x_j$, it turns out that consecutive labels cancel and this shows why the sum $\sum_{i=1}^{n} \vartheta_i A_{i,*}=0$ is zero.
Recall that $x_j$ goes from the $(j-1)$-st under-crossing to the $j$-th under-crossing, so it is an incoming underarc and outgoing underarc exactly once. However, it can be an overarc for multiple crossings. Since the $\vartheta_i$'s correspond to the Alexander numbers, if we have a short arc contributing two terms to the sum, each term must have the same power of $t$ (see Figure~\ref{cons-AN}).

The reason is that Alexander numberings are assigned to the short arcs but the labels (as in Figure~\ref{AN-at-i}) are assigned to half of a short arc. They are still related to the Alexander numbering on the arc, and any two terms on the same short arc must have the same Alexander number. On the other hand, the two terms are of opposite sign (since one will be ingoing and one outgoing), and that is why the sum is zero. This completes the proof of the claim. 

%
%
%
%

The claim provides a linear dependence among the rows of $A$, and the fundamental identity of Fox derivatives shows that the sum of the columns of $A$ is zero. This fact, together with the observation that each $\vartheta_i$ is a unit in $\ZZ[t^{\pm 1}],$ shows that any $(n-1) \times (n-1)$ minor is a generator of $\cE_1$. In particular, $\cE_1$ is principal, and this completes the proof.
\end{proof}

As a consequence of the proof, it follows that for an almost classical knot $K$ with Jacobian matrix $A$ constructed as in Definition~\ref{method-1}, its Alexander polynomial $\Delta_K(t)$ is given by taking the determinant of the $(n-1) \times (n-1)$ matrix obtained by removing any row and any column from $A$. 


\subsection{Virtual braids}
In this section, we introduce virtual braids and recall the virtual analogue of Alexander's theorem (stated later in this section), which shows that every virtual knot or link can be realized as the closure of a virtual braid. 

The \emph{virtual braid group on $k$ strands}, denoted $\VB_{k}$,
is the group generated by $\sigma_{1}, \ldots, \sigma_{k-1}, \tau_1,\ldots, \tau_{k-1}$ subject to the relations in equations \eqref{classical-rel}, \eqref{virtual-rel}, \eqref{mixed-rel}. Here, $\sigma_{i}$ represents a classical crossing  and $\tau_i$ represents a virtual crossing involving the $i$-th and $(i+1)$-st strands as in Figure~\ref{generators}. Virtual braids are drawn from top to bottom, the group operation is given by stacking the diagrams, and the closure of a virtual braid represents a virtual link. 
 
\begin{figure}[ht]
\centering
\includegraphics[scale=0.85]{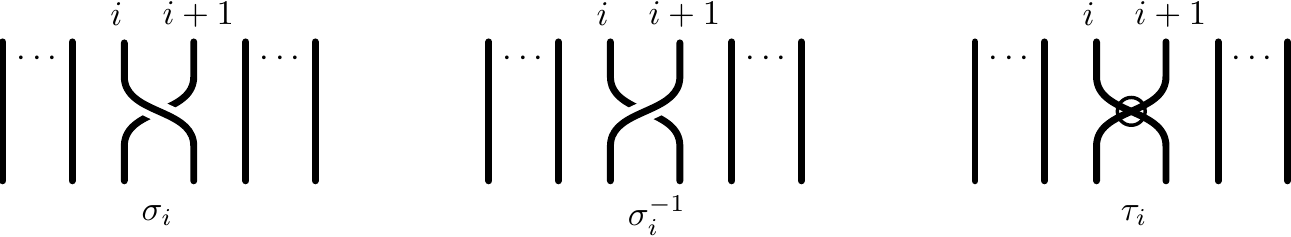}
\vspace{-2mm}
\caption{Generators of $\VB_{k}$.}
\label{generators}
\end{figure}  


Note that the virtual generators $\tau_1, \ldots, \tau_{k-1}$ generate a finite subgroup of $\VB_k$ isomorphic to the symmetric group $S_k$ on $k$ letters; the element $\tau_i$ swaps the $i$-th and $(i+1)$-st strands and corresponds to the transposition $(i, i+1) \in S_n$.

\begin{equation} \label{classical-rel}
\begin{array}{rcl}
\sigma_{i}\sigma_{j}&=&\sigma_{j}\sigma_{i} \hspace{1.5cm} \text{ if $|i-j|>1$,} \\
\sigma_{i}\sigma_{i+1}\sigma_{i}&=&\sigma_{i+1}\sigma_{i}\sigma_{i+1},
\end{array}
\end{equation}
\begin{equation} \label{virtual-rel}
\begin{array}{rcl}
\tau_{i}\tau_{j}&=&\tau_{j}\tau_{i} \hspace{1.5cm} \text{ if $|i-j|>1$,} \\
\tau_{i}\tau_{i+1}\tau_{i} &=& \tau_{i+1}\tau_{i}\tau_{i+1}, \\
\tau_{i}^{2}&=&1,
\end{array}
\end{equation} 
\begin{equation}\label{mixed-rel}
\begin{array}{rcl}
\sigma_{i}\tau_{j}&=&\tau_{j}\sigma_{i} \hspace{1.5cm} \text{ if $|i-j|>1$,} \\
\tau_i \sigma_{i+1}\tau_i & = & \tau_{i+1} \sigma_i \tau_{i+1}.
\end{array}
\end{equation} 

The next result is Alexander's theorem for virtual knots and links, and it was first proved by Kamada in \cite{kamada} via a braiding process. Interestingly, the statement of Alexander's theorem is stronger in the virtual setting because, as we shall see, the virtual braid faithfully reproduces the Gauss code of the virtual knot diagram, cf. \cite[Theorem 2.1]{birman}. We provide a proof, and later in Theorem~\ref{mperiod} we will establish an equivariant version of Alexander's theorem for periodic virtual knots.

\begin{theorem} \label{Thm-Alex}
Every virtual knot diagram can be realized as the closure of a virtual braid.
\end{theorem}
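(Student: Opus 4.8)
The plan is to adapt Alexander's classical braiding argument, exploiting the extra flexibility that virtual crossings and the detour move provide. First I would fix an orientation on the virtual knot diagram $D$ together with a basepoint, and single out a point in the plane to serve as the braid axis, so that ``braided'' means the underlying immersed curve runs monotonically in the angular (say counterclockwise) direction around that point. Cutting $D$ at its crossings into elementary arcs, I would call an arc \emph{coherent} if it travels in the chosen angular direction and \emph{incoherent} otherwise. If every arc is coherent, then $D$ is already in braid position: its closure recovers $D$, each classical crossing is a generator $\sigma_i^{\pm 1}$ (with sign $\ep_i$), and each virtual crossing is a generator $\tau_i$ of $\VB_k$, where $k$ is the number of times the curve meets a fixed ray emanating from the axis.

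The heart of the argument is the elimination of incoherent arcs. Given such an arc, I would reroute it by a detour that carries it the ``long way'' around the braid axis so that it becomes coherent, pushing the detour across the remainder of the diagram using only virtual crossings. By the detour move (Figure~\ref{fig:detour}) this operation does not change the virtual knot type, and crucially it neither creates nor destroys any classical crossing: the classical crossings sitting on the rerouted arc are simply transported along with it. Iterating, after finitely many such moves every arc is coherent, and the resulting diagram is the closure of a virtual braid $\beta \in \VB_k$. This is precisely Kamada's braiding process \cite{kamada}.

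Because no classical crossing is ever added or removed, and because the cyclic order in which the classical crossings are met while traversing $D$ is left unchanged by the rerouting, the braid word obtained by reading $\beta$ from top to bottom reproduces the Gauss code of $D$ symbol for symbol; this is the promised faithfulness. The hard part will be making the rerouting step precise: one must fix an order in which to treat the incoherent arcs, verify at each stage that the segment being moved meets only virtual crossings (so that the detour move genuinely applies, rather than some Reidemeister move that could alter the classical crossing data), and check that the process terminates with a genuine braid. Once this is done, the translation of the classical and virtual crossings into the generators $\sigma_i$ and $\tau_i$, and the verification that the closure returns $D$, is routine bookkeeping.
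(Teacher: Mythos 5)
Your argument is correct in outline, but it is genuinely different from the one in the paper. You run the geometric braiding process of Alexander--Kamada: fix an axis, cut $D$ at its double points into elementary arcs, and use detour moves to replace each incoherent arc by one that winds monotonically, declaring all new crossings virtual. The paper instead works purely combinatorially from the Gauss code: it places the $n$ classical crossings side by side, oriented downward according to their signs, labels the $2n$ arc-ends {\tt O1, U1, \dots} at top and bottom, and joins outgoing arcs to the points dictated by the Gauss word by monotone strands whose mutual intersections are all virtual. That construction produces a canonical braid on exactly $2n$ strands and makes the faithfulness of the Gauss code immediate, which matters later when the paper carries out the same construction equivariantly for periodic diagrams (Theorem~\ref{mperiod}); your approach typically yields a braid on fewer strands (the winding number about the axis) and stays closer to the familiar classical picture, at the cost of having to organize the rerouting carefully. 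Two points in your sketch deserve attention. First, making the elementary arcs coherent is not quite enough: a classical crossing itself may sit in the plane so that its two strands pass through it with opposite angular sense, and no rerouting of the adjacent arcs fixes this; you must also transport or rotate such crossings (easy in the virtual setting, since their four legs can be detoured freely, but it is an extra step that the classical argument also has to confront). Second, the Gauss code of $D$ is recovered as the Gauss code of the closure $\widehat{\beta}$, read by traversing the closed curve; it is not literally the braid word of $\beta$ read from top to bottom, since the latter orders crossings by height rather than along the knot. Neither issue is fatal, but both belong in the ``hard part'' you defer.
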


\begin{proof}
Let $K$ be a virtual knot diagram with $n$ real crossings and Gauss code $C_K$. We will show how to construct a virtual braid $\beta$ on $2n$ strands whose closure is a virtual knot with Gauss code identical with $C_K$, and it follows that $\widehat{\beta}$ and $K$ are equivalent (as virtual knot diagrams) up to a sequence of detour moves and planar isotopies.

First, we draw the $n$ real crossings side by side pointing downwards according to their sign (see Figure~\ref{local-writhe}). For example,
for the Gauss code $C_K=\hbox{\tt O1-U2+O3-U1-O2+U3-}$, we draw three crossings as in Figure~\ref{braid-algo}.

\begin{figure} 
\begin{center}
\includegraphics[scale=0.80]{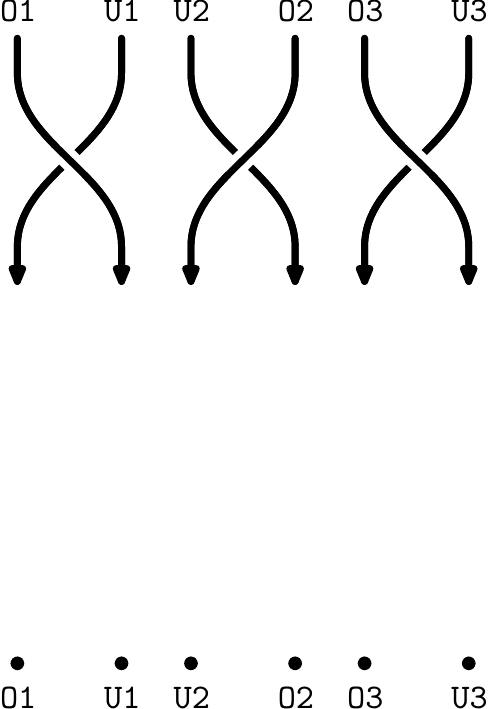}
\qquad \qquad \qquad
\includegraphics[scale=0.80]{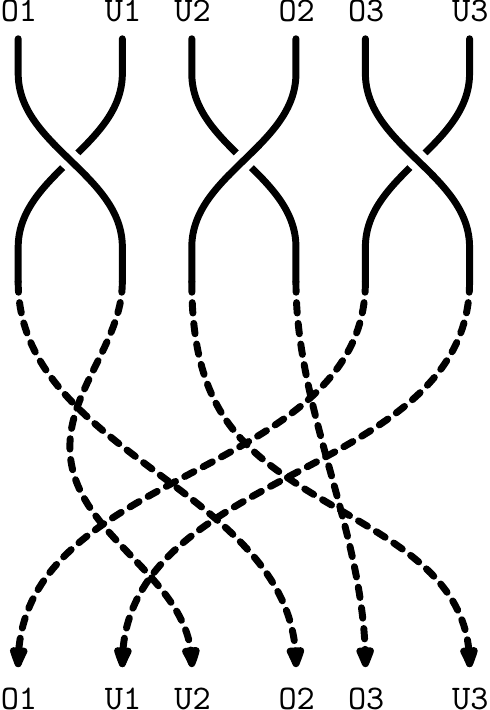}
\end{center}
\caption{The braiding algorithm for $C_K=\hbox{\tt O1-U2+O3-U1-O2+U3-}$.}\label{braid-algo}
\end{figure}

We label the $2n$ arcs across the top with {\tt O1, U1, U2, O2, ...} appropriately (that is, {\tt Oi,Ui} for a negative crossing, and  {\tt Ui,Oi} for a positive crossing), and we draw $2n$ points directly underneath, which we label {\tt O1, U1, U2, O2, ...} in exactly the same order as on top. (This is illustrated on the left of Figure~\ref{braid-algo}.)

The Gauss word $C_K$ tells us how to connect the outgoing arcs from each of the $n$ crossings to the corresponding points at the bottom. For instance, the first part of the Gauss word {\tt O1-U2+O3-U1-O2+U3-} tells us to connect the outgoing overarc of the first crossing ({\tt O1}) to the point labelled {\tt U2} below, and next it tells us to connect the outgoing underarc of the second crossing ({\tt U2}) to the point labelled {\tt O3}. Continuing in this way, we connect all the arcs to points, with the last crossing in the Gauss word connected back to the first entry. In the example, it tells us to connect the outgoing underarc of the third crossing ({\tt U3}) to the point labelled {\tt O1}.
The outcome is a virtual braid as depicted on the right of Figure~\ref{braid-algo}. 

The connecting arcs are drawn monotonically decreasing, and every new crossing that is created is drawn as a virtual crossing. Basically, the connecting lines, which appear as dashed lines in Figure~\ref{braid-algo}, determine an element in $S_{2n}$, the symmetric group. Because the virtual generators of $\VB_{2n}$ generate a subgroup  isomorphic to $S_{2n}$, we can always write this element of $S_{2n}$ as a word in the $\tau_1,\ldots, \tau_{2n-1}$. In the example above, we get the word $\tau_1 \tau_4 \tau_3 \tau_5 \tau_4 \tau_5 \tau_2 \tau_1 \tau_3\tau_2$.

The resulting diagram will be a virtual braid $\beta$ 
on $2n$ strands whose closure is equivalent to the given knot $K$. In fact, as one can easily verify, the Gauss code of $\widehat{\beta}$ is equal to $C_K$.
\end{proof}

We conclude this section by defining almost classical braids and introducing an invariant for them.
\begin{definition} A braid is called
\emph{almost classical} if it admits an Alexander numbering (that is, if one can number the arcs of $\beta$ such that, at each crossing the conditions of Figure~\ref{Alexander-Numbering-Def-2} are satisfied) and so that the numbers along the bottom of $\beta$ coincide with the numbers at the top.
\end{definition}
 
Note that a braid is almost classical if and only if its closure $\widehat{\beta}$ is an almost classical diagram. For example, if $\beta \in B_k$ is a classical braid, then taking $\lambda_i=i$ on the $i$-th strand at the top extends to an Alexander numbering of $\beta$ such that the $i$-th strand on the bottom also has the number $i$, thus any classical braid $\beta$ is almost classical. Note that taking $\lambda_i=i$ gives a valid Alexander numbering since, for a classical braid, there are no virtual crossings, and each strand's Alexander number must increase by one as we move from left to right across the strands, because of the conditions of Figure~\ref{Alexander-Numbering-Def-2}. So, if we start on the left with $\lambda_1 = 1$, then the second strand will need to have $\lambda_2 = 2$, and so on. 

If $\beta \in \VB_k$ is an almost classical braid, then consider the polynomial $f(t) = \sum_{i=1}^k t^{\lambda_i}$, where $\lambda_i$ refers to the Alexander number on the $i$-th strand at a horizontal cross-section of $\beta$. Notice that this polynomial is independent of where along the braid the cross-section is taken.  
When passing a classical crossing, the Alexander numbers on the two strands swap positions, but $f(t)$ remains unchanged. When passing a virtual crossing, the Alexander numbers do not change.
Taken up to multiples of $t^\ell$, $f(t)$ gives a well-defined invariant of almost classical braids, which is also independent of the choice of Alexander numbering provided $\beta$ is not a split braid.  


\subsection{Alexander invariants (reprise)}

Let $K$ be a virtual knot diagram, which has been realized as the closure  $\widehat{\beta}$ for a virtual braid $\beta \in \VB_k$. We use the braid realization to give an alternative presentation matrix for the Alexander invariants. The main difference from Definition~\ref{method-1} is that we have generators $x_1, \ldots,  x_k$ for the strands on the top of $\beta$ and generators $z_1, \ldots, z_k$ for the strands on the bottom. This approach is especially convenient in deriving formulas for the Alexander invariants of periodic virtual knots $K$ which are represented as the closures of periodic virtual braids.
  

\begin{definition} \label{method-2} 
Suppose $K$ is a virtual knot diagram with $n$ crossings, and apply Theorem~\ref{Thm-Alex} to write $K= \widehat{\beta}$, where $\beta$ is a virtual braid on $k$ strands. We
label the arcs on top of $\beta$ by $x_1,  \ldots, x_k$ and the arcs on the bottom of $\beta$ by $z_1, \ldots, z_k$, and we use $y_1,\ldots, y_r$ to label the internal arcs $\beta$, which are the arcs that do not start or end at the top or bottom of $\beta$.
(Note that we will typically have $r = n-k$, unless $n<k$ or some strands of $\beta$ pass over all the other strands.)

This gives a presentation for the knot group 
$$G_K = \langle x_1,\ldots, x_k, y_1, \ldots, y_r, z_1, \ldots, z_k \mid R_1,\ldots, R_n,S_1, \ldots, S_k\rangle, $$ 
where the $R_i$ are the usual Wirtinger relations coming from the crossings of $\beta$ and the $S_i$ are the relations which correspond to setting $x_i = z_i$ for the closure $\widehat{\beta}.$

The \emph{Jacobian matrix} $B$ associated to this presentation of $G_K$ is the $(n+k) \times (n+k)$ matrix 
with rows ordered by the relations $R_1,\ldots, R_n, S_1, \ldots S_k$  and columns ordered by the generators $x_1,\ldots, x_k, y_1, \ldots, y_r, z_1, \ldots, z_k$ and $(i,j)$ entry given by the Fox differentiating the $i$-th relation with respect to the $j$-th generator and applying  the abelianization map $G_K \to \ZZ$ sending each of the generators to $t$. 
\end{definition}

\begin{figure}
\begin{center}
\includegraphics[scale=0.75]{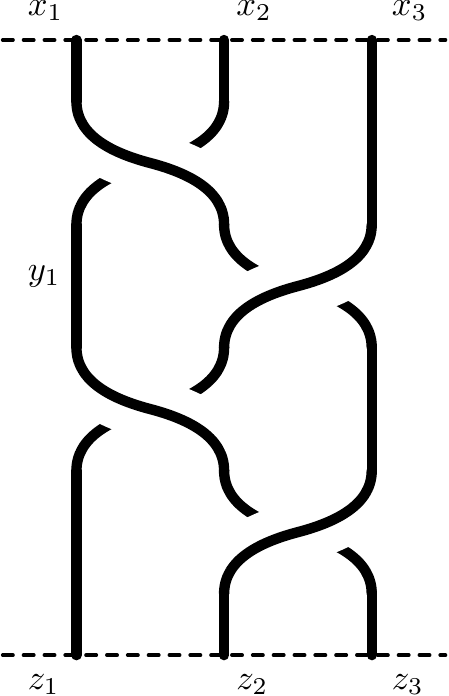}
\end{center} 
\caption{The classical braid $\beta = (\sigma_1 \sigma_2)^2$.} \label{braid-ex}
\end{figure}

For example, consider the classical braid $\beta = (\sigma_1 \sigma_2)^2$ in Figure~\ref{braid-ex}. The knot group $G_K$ of  $K=\widehat{\beta}$ has a presentation with generators $x_1, x_2, x_3, y_1, z_1, z_2, z_3$ and relations
 
\begin{eqnarray*}
&R_1= x_1 x_2 x_1^{-1} y_1^{-1}, \;
R_2 = x_3^{-1} x_1 x_3 z_2^{-1}, \;
R_3 = y_1 x_3 y_1^{-1} z_1^{-1}, \;
R_4 = z_2^{-1} y_1 z_2 z_3^{-1},& \\
&S_1 = z_1 x_1^{-1}, \; 
S_2 = z_2 x_2^{-1}, \;
S_3 = z_3 x_3^{-1}. \qquad \qquad& \\ 
\end{eqnarray*}

We write the Jacobian matrix of $G_K$ with rows ordered by the relations $R_1, \ldots, R_4, S_1, \ldots, S_3$ and columns ordered by the generators $x_1, x_2, x_3, y_1, z_1, z_2, z_3$;  it is the square matrix

$$
B=\begin{bmatrix}
1-t & t & 0 & -1 & 0 & 0 & 0   \\ 
t^{-1} & 0 & 1-t^{-1} & 0 & 0 & -1 & 0\\ 
0 & 0 & t & 1-t & -1 & 0 & 0   \\ 
0 & 0 & 0 & t^{-1} & 0 & 1-t^{-1} & -1 \\ 
-1 & 0 & 0 & 0 & 1 & 0 & 0  \\ 
0 & -1 & 0 & 0 & 0 & 1 & 0 \\ 
0 & 0 & -1 & 0 & 0 & 0 & 1  
\end{bmatrix}.
$$

Note that the Wirtinger presentation for the virtual knot is
$$G_K = \langle x_1,x_2,x_3, y_1 \mid \tilde{R}_1, \tilde{R}_2, \tilde{R}_3, \tilde{R}_4 \rangle,$$
where $\tilde{R}_i$ is obtained from $R_i$ above by substituting $x_j$ for $z_j$. Applying Definition~\ref{method-1} to this presentation of $G_K$ gives the Jacobian 
$$A=\begin{bmatrix}
1-t & t & 0 & -1     \\ 
t^{-1} & -1 & 1-t^{-1} & 0 \\ 
-1 & 0 & t & 1-t   \\ 
0 & 1-t^{-1} & -1 & t^{-1}  
\end{bmatrix}.$$
Notice that this matrix can also be obtained from the upper left $4 \times 4$ block of $B$ by combining (that is, adding) the $x_j$ and $z_j$ columns. 

In general, the matrix derived from Definition~\ref{method-2} will be given by
$$
B =   
\left[
\begin{array}{c|c|c}
\partial R_i/\partial x_j & \partial R_i/\partial y_j & \partial R_i/\partial z_j\\ \hline
\partial S_i/\partial x_j & \partial S_i/\partial y_j & \partial S_i/\partial z_j
\end{array} \right] = \left[
\begin{array}{c|c|c}
x_* & y_* & z_* \\ \hline
-I_k & 0 & I_k  
\end{array} \right],
$$
and it is related to the matrix derived from Definition~\ref{method-1}, which is
$$A = 
 \left[
\begin{array}{c|c}
x_* +z_*& y_*  \\ \hline
0 & 0 \\
\end{array} \right]
\cong
\left[
\begin{array}{c|c}
x_* +z_*& y_*  \\
\end{array} \right].
$$

We now state and prove a result analogous to Proposition~\ref{linearcombo} for the Jacobian matrix $B$ from Definition~\ref{method-2} for an almost classical knot $K$. We will assume that $K$ has been realized as the closure of an almost classical braid $\beta \in \VB_k$. 

\begin{lemma}\label{LC-method-2}
Suppose $K$ is an almost classical knot diagram with $K=\widehat{\beta}$ for $\beta \in \VB_k$ an almost classical braid with $n$ crossings. 
Let $B$ be the Jacobian matrix (constructed as in Definition~\ref{method-2}) with relations $R_1, \ldots, R_n$ from the crossings and $S_1, \ldots, S_k$ from the identities $z_i x_i^{-1}$ as above. Note that $B$ is an $(n+k) \times (n+k)$ matrix. For $i=1,\ldots, n,$ let $\lambda^R_i$ be the Alexander number of the $i$-th crossing of $\beta$, as in Figure~\ref{crossing-AC-num-braid}, and for $i=1,\ldots, k$, let $\lambda^S_{i}$ be the Alexander number of the $i$-th strand  at the top of $\beta$. 

Then ${\displaystyle \sum_{i=1}^{n+k}} \omega_i B_{i,*} = 0$, where
$$\omega_i = 
\begin{cases}
t^{\lambda^R_i} &\text{for $1\le i \le n$ and $\ep_i = -1$,} \\
t^{\lambda^R_i + 1} &\text{for $1\le i \le n$  and $\ep_i = +1$,}  \\
t^{\lambda^S_{i-n}} & \text{for $n+1 \le i \le n+k$.}
\end{cases}
$$
\end{lemma}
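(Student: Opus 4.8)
The plan is to reduce the statement to the row dependence already established for the method-one Jacobian in Proposition~\ref{linearcombo}, and thereby isolate all of the work into a single ``boundary'' computation. Recall from the discussion following Definition~\ref{method-2} that the method-one matrix $A$ of $\widehat{\beta}$ (Definition~\ref{method-1}) is obtained from the top block of $B$ by adding the $x_j$ and $z_j$ columns and discarding the (now vanishing) rows coming from the $S_i$; symbolically $A=[\,x_*+z_*\mid y_*\,]$, with nonzero rows indexed by $R_1,\dots,R_n$. Here the combined generator $x_m\equiv z_m$ is a genuine long arc of $\widehat{\beta}$, since the closure arc carries no crossing and so $z_m$ and $x_m$ merge into one arc between undercrossings. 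Because $\beta$ is an almost classical braid, the Alexander numbers at the top of each strand agree with those at the bottom, so the chosen numbering of $\beta$ descends to a global Alexander numbering of the closed diagram $\widehat{\beta}$; in particular $\widehat{\beta}$ is almost classical and Proposition~\ref{linearcombo} applies. The crossings of $\widehat{\beta}$ are exactly the crossings of $\beta$, so the weights $\vartheta_i$ of Claim~\ref{LC-method-1} coincide with $\omega_1,\dots,\omega_n$. Hence $\sum_{i=1}^n \omega_i A_{i,*}=0$.

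First I would read off the consequences of $\sum_{i=1}^n \omega_i A_{i,*}=0$ column by column. For an internal column $y_m$ the entries of $A$ and of the top block of $B$ agree, and the $S$-rows contribute nothing, so $\sum_{i=1}^{n+k}\omega_i B_{i,y_m}=\sum_{i=1}^n\omega_i B_{i,y_m}=0$ at once. For the pair of columns $x_m,z_m$, the combined column of $A$ gives $\sum_{i=1}^n \omega_i\big(B_{i,x_m}+B_{i,z_m}\big)=0$; writing $P_m=\sum_{i=1}^n\omega_i B_{i,x_m}$ and $Q_m=\sum_{i=1}^n \omega_i B_{i,z_m}$, this says $Q_m=-P_m$. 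Since $S_j=z_jx_j^{-1}$ contributes $-1$ in column $x_j$ and $+1$ in column $z_j$ (and $0$ elsewhere), the full column sums become $P_m-t^{\lambda^S_m}$ in column $x_m$ and $Q_m+t^{\lambda^S_m}=-P_m+t^{\lambda^S_m}$ in column $z_m$. Both vanish as soon as one proves the single identity
\begin{equation*}
P_m=\sum_{i=1}^n \omega_i B_{i,x_m}=t^{\lambda^S_m}.
\end{equation*}

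The heart of the proof is therefore this boundary computation, and it is where I expect the real work to lie. The top arc $x_m$ begins at the top of $\beta$, where its short arc carries the Alexander number $\lambda^S_m$, and it is incident to crossings only as an overarc or, at its terminal crossing, as an incoming underarc --- never as an outgoing underarc, since it does not originate at a crossing. Running the same short-arc bookkeeping as in the proof of Claim~\ref{LC-method-1}, each overpass of $x_m$ contributes the difference of the two monomials attached to the short arcs on either side of the crossing, and these differences telescope along $x_m$; the terminal incoming-underarc term cancels the far end of the telescoping sum, leaving exactly the contribution of the free top end, namely $t^{\lambda^S_m}$. The difference from Claim~\ref{LC-method-1} is precisely that $x_m$ is not a closed loop but an arc with a free boundary at the top of the braid, so the telescoping does not collapse to zero but instead to this boundary monomial. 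The main obstacle is bookkeeping the signs and the $\pm1$ jumps of the Alexander numbers at each overpass carefully enough to confirm that the surviving term is $+t^{\lambda^S_m}$ rather than its negative; the worked example $\beta=(\sigma_1\sigma_2)^2$, where one checks directly that $P_1=t$, $P_2=t^2$, and $P_3=t^3$ against the displayed matrix $B$, confirms both the identity and the sign.

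Finally I would assemble the three cases: the $y_m$ columns vanish by the reduction to Proposition~\ref{linearcombo}, and the $x_m$ and $z_m$ columns vanish by combining $Q_m=-P_m$ with the boundary identity $P_m=t^{\lambda^S_m}$ and the explicit $S$-row contributions. Degenerate strands --- those passing over all others, so that $x_m$ runs uninterrupted to the bottom and the distinction between $x_m$ and $z_m$ partly collapses --- are handled by the same telescoping, with the free end now reached without any terminal undercrossing. This yields $\sum_{i=1}^{n+k}\omega_i B_{i,*}=0$, as claimed.
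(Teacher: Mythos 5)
Your proposal is correct and follows essentially the same route as the paper's own proof: both reduce to the row dependence of Claim~\ref{LC-method-1}, observe that the $y$-columns are untouched, and identify the only change as the cancelling pair $\pm t^{\lambda^S_m}$ that gets separated onto the $x_m$ and $z_m$ columns when the closure arc is cut, which is then restored by the $S_m$-row weighted by $t^{\lambda^S_m}$. Your reorganization via $Q_m=-P_m$ plus the single boundary identity $P_m=t^{\lambda^S_m}$ is a slightly cleaner bookkeeping of the same telescoping argument, not a different method.
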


\begin{figure} 
\begin{center}
\includegraphics[scale=0.80]{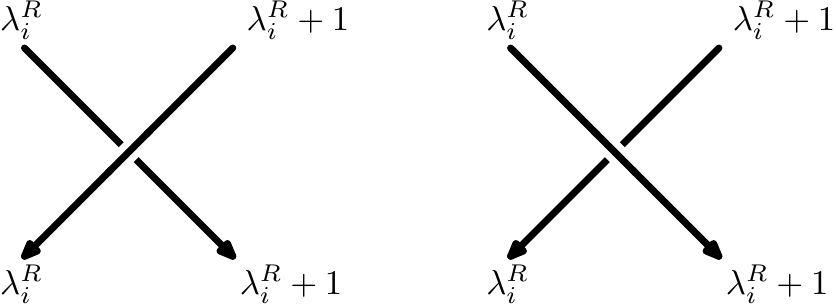}
\end{center}
\caption{The Alexander numbers at the $i$-th crossing for $\varepsilon_i=1$ on the left and $\varepsilon_i=-1$ on the right.}\label{crossing-AC-num-braid}
\end{figure}

\begin{proof}
From Claim~\ref{LC-method-1},  if we have a matrix $A$ constructed from Definition~\ref{method-1}, then $\sum_{i=1}^n \vartheta_i A_{i,*} = 0$, where
$$\vartheta_i = 
\begin{cases}
t^{\lambda^R_i} & \text{if $\ep_i = -1$ for $1 \le i \le n,$} \\
t^{\lambda^R_{i} + 1} & \text{if $\ep_i = +1$ for $1\le i \le n.$} 
\end{cases}
$$
Recall from the proof of Claim~\ref{LC-method-1}, that $\sum_{i=1}^n \omega_i A_{i,j}$ corresponds to the sum of all the $\pm t^k$ labels on the arc corresponding to the $j$-th column (so if column $j$ was corresponding to an $x_k$ label at the top, then it would be the sum of the labels on the arc that is labeled $x_k$), where the labels were assigned as in Figure~\ref{AN-at-i}. When we label the arcs as in Definition~\ref{method-2} for a virtual braid, the $y_i$ arcs are not affected, and the only difference is that the arcs that were previously labelled $x_i$ are now cut in half (on the unknotted part of the braid), and we have both an $x_i$ and $z_i$ arc. Previously, all the labels on $x_i$ cancelled in pairs, so we had a sum of $0$. But now that we have split $x_i$ into two pieces, there is a pair that gets separated. The label at the top of the braid will be a $t^{\lambda^S_i}$ (positive since it is an ingoing arc), which will sit on the $x_i$ arc. At the bottom of the braid, there will be a $-t^{\lambda^S_i}$ for the outgoing arc, which will sit on the $z_i$ arc. Thus, before considering the $S_i$ relations, we will have a sum of $t^{\lambda^S_i}$  on the $x_i$ arc (all other labels within the braid on $x_i$ will cancel as before), and a sum of $-t^{\lambda^S_i}$  on the $z_i$ arc (all other labels within the braid on $z_i$ will cancel as before). It remains to look at the $S_i$ relations that contribute to the linear combination. Recall that these correspond to $x_i = z_i$, which will give a $-1$ in the $x_i$ column and a $+1$ in the $z_i$ column, and zeros everywhere else. Hence, if we multiply this row by $t^{\lambda^S_i}$, we will get a sum of zero for the $x_i$ and $z_i$ columns. As stated above, the $y_i$ columns remain unchanged from the previous method, and since the $S_i$ relations do not involve $y_i$'s, the sum in their columns remains zero as well. Now the row corresponding to $S_i$ is actually the $(n+i)$-th row of $B$, so for $n+1 \le i \le n+k$, we take $\omega_i = t^{\lambda^S_{i-n}}$.
\end{proof}


\section{Periodic Virtual Knots} 

 

\subsection{Basic Definitions} \label{sec-3-1}
In this section, we recall the definition of periodicity for virtual knot diagrams, Gauss codes and Gauss diagrams. We write out Wirtinger presentations for the knot groups $G_K$ and $G_{K_*}$ of a periodic virtual knot and its quotient, and we show how the Jacobian matrix of $K$ is related to that of $K_*$ in terms of circulant block matrices.

\begin{definition}
A virtual knot diagram $K$ is called \emph{periodic} with \emph{period $q$} if it misses the origin and is invariant under a rotation in the plane by an angle of $2 \pi / q$ about the origin. 
 
Given a periodic virtual knot diagram $K$, its \emph{quotient knot} $K_*$ is the knot obtained by closing up one fundamental domain of $K$. To be specific, take one fundamental domain of $K$, which is a pie-shaped region centered at the origin with angle $2 \pi/q$, and connect the arcs along the upper and lower edges with concentric circular arcs.
\end{definition}

Figure~\ref{PVK} illustrates a periodic virtual knot on the left and its quotient on the right. In that picture, $\tau$ is a virtual tangle diagram.

\begin{figure}
\begin{center}
\includegraphics[scale=0.75]{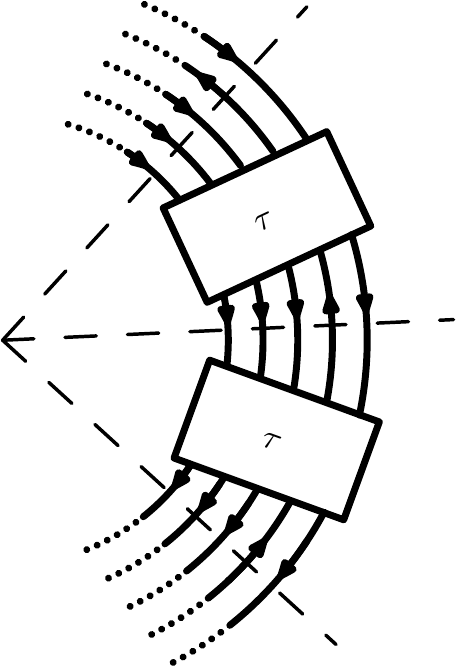} \qquad \qquad
\includegraphics[scale=0.80]{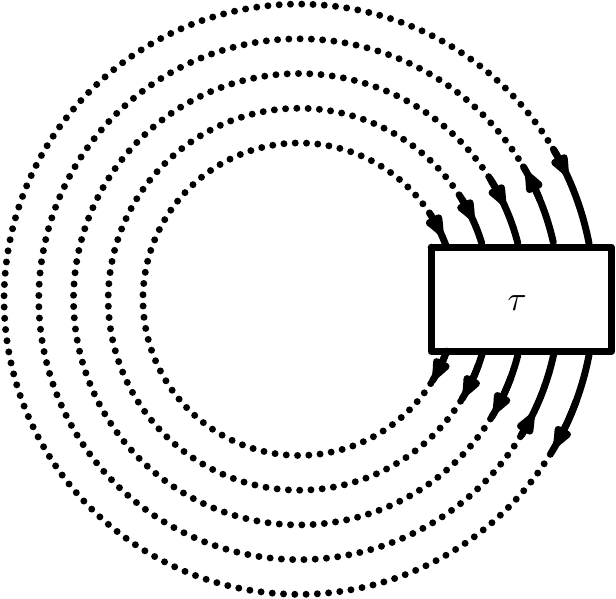}
\end{center}
\caption{A periodic virtual knot and its quotient knot.}
\label{PVK}
\end{figure}

\begin{definition}
The  \emph{linking number} $k$ of a periodic virtual knot diagram $K$ is the absolute value of the intersection number of a ray $R$ emanating from the origin with the virtual knot diagram. As usual, we sum up intersection points, and they count positively if they come from an arc of $K$ that winds counter-clockwise around the origin, otherwise they count negatively.  
\end{definition}

For example, the periodic virtual knot in Figure~\ref{PVK} has linking number 3. A well known example is the $(3,3,3)$-pretzel knot $9_{35}$, and Figure~\ref{pretzel-1} depicts a 3-periodic diagram for $9_{35}$ with linking number $k=2$ and quotient $K_*$ a diagram of the unknot.

\begin{figure}
\begin{center}
\includegraphics[scale=0.65]{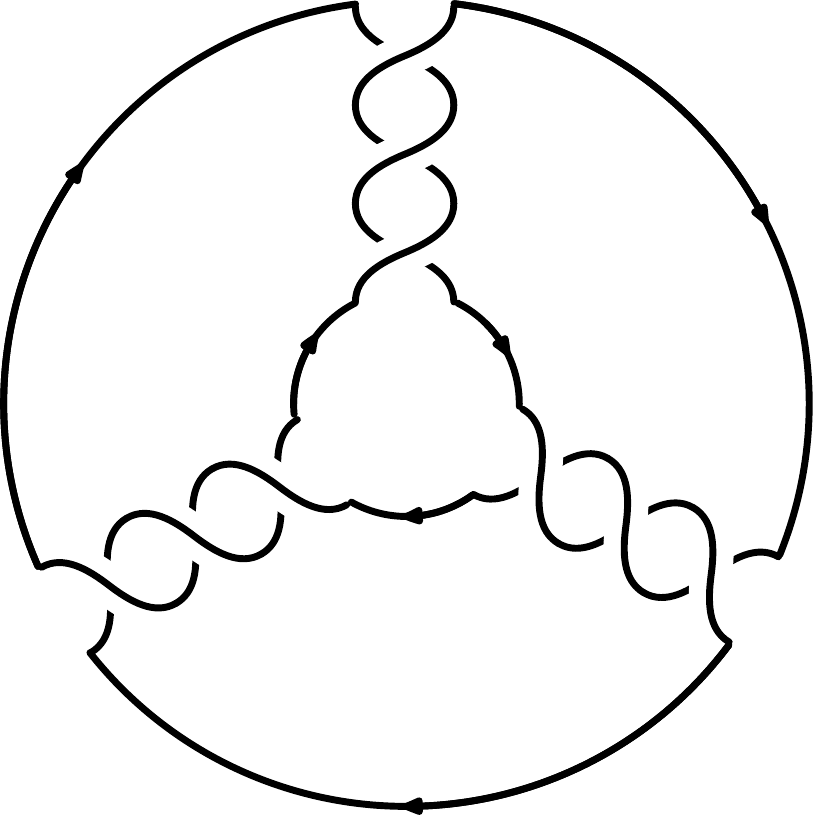}  
\end{center}
\vspace{-2mm}
\caption{A 3-periodic knot diagram for the pretzel knot $9_{35}$.}
\label{pretzel-1}
\end{figure}

Periodicity of a virtual knot diagram is reflected in its Gauss code. For instance, the diagram for the pretzel knot $9_{35}$ has underlying Gauss code
$$C=\hbox{\tt U1-O2-U3-O6-U5-O4-U7-O8-U9-O3-U2-O1-U4-O5-U6-O9-U8-O7-}.$$ 

In general, we say a Gauss code is $q$-periodic if the {\tt O/U} and {\tt +/-} patterns repeat with period $q$, and whenever {\tt Oi} goes to {\tt Oj} in the next period, then {\tt Ui} goes to {\tt Uj}. (In other words, the periodic transformation is a well-defined map on the crossings of $K$.)

For instance, the Gauss code for $9_{35}$ is a signed word of length 18, and we will write it in the following way to emphasize its 3-periodicity.
\begin{eqnarray*}
C=&\hbox{\tt U1-O2-U3-O6-U5-O4-}& \\
&\hbox{\tt U7-O8-U9-O3-U2-O1-}&\\
&\hbox{\tt U4-O5-U6-O9-U8-O7-}.&
\end{eqnarray*}
Thus, the periodic transformation of $C$ is the map (of ordered sets)
$$
{\tt \{1,2,3\} \mapsto \{7,8,9\} \mapsto \{4,5,6\} \mapsto \{1,2,3\}.}$$
(Note, a more succinct description of this map is $i \mapsto i +6 \mod 9$.) Applying this transformation to $C$, the new Gauss code is easily seen to be equivalent to the original one under a cyclic permutation of $C$. 

A Gauss diagram is said to be $q$-periodic if it is invariant under a rotation of an angle of $2\pi/q$. This is equivalent to the condition of $q$-periodicity for the associated Gauss code, but it is  easier to visualize. 

Clearly, if $K$ is a $q$-periodic virtual knot diagram, then its Gauss code and Gauss diagram are also both $q$-periodic. In that case, the Wirtinger presentation associated to the periodic diagram as in Equation \eqref{gp-pres-2} is symmetric, as we explain. 

Suppose $K$ has $qn$ crossings. Pick a basepoint and label the chords 
$$c_{1,0}, \ldots, c_{n,0}, c_{1,1}, \ldots, c_{n,1}, 
\ldots, c_{1,q-1}, \ldots, c_{n,q-1}$$ of the Gauss diagram $D_K$ 
in the order in which their arrowheads are encountered as one travels around the knot. Because $K$ is $q$-periodic, we can assemble them 

$$\begin{array}{ccc}
c_{1,0} & \cdots & c_{n,0} \\
\vdots & & \vdots \\
c_{1,q-1} & \cdots & c_{n,q-1}
\end{array}
$$ 
so that the periodic action is the vertical shift sending $c_{i,j}$ to $c_{i,j+1}$ for $j = 0,\ldots, q-1$, with $j+1$ taken $\mod q,$ which is to say that if $j=q-1$, then $j+1$ equals 0.

We can label the arcs
$$
\begin{array}{ccc}
a_{1,0} & \cdots & a_{k,0} \\
\vdots & & \vdots \\
a_{1,q-1} & \cdots & a_{n,q-1}
\end{array}
$$ 
accordingly, so that, for $i=2, \ldots, n$, the arc $a_{i,j}$ starts at the arrowhead of $c_{i-1,j}$ and ends at $c_{i,j}$; see Figure~\ref{GD-periodic}. When $i=1$, the arc $a_{1,j}$ starts at $c_{n,j-1}$ and ends at $c_{1,j}$.

\begin{figure}
\centering
 \includegraphics[scale=0.9]{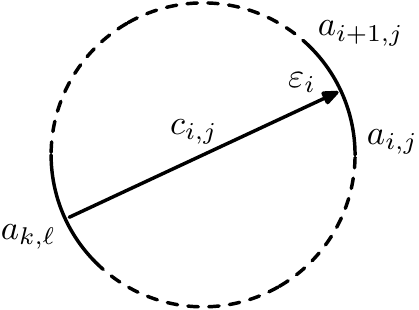} \qquad \qquad \includegraphics[scale=0.9]{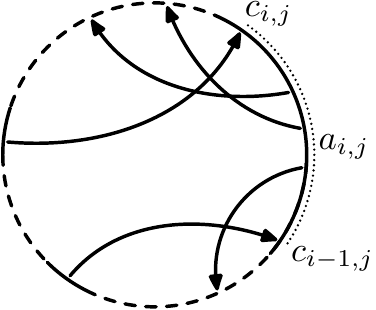}  
\caption{On left, $c_{i,j}$ separates $a_{i,j}$ and $a_{i+1,j}$ and has arrowfoot on $a_{k,\ell}$. \qquad
On right, $a_{i,j}$ is the dotted arc between the arrowheads of $c_{i-1,j}$ and $c_{i,j}$.}
\label{GD-periodic}
\end{figure}

If $\ep_{i,j} = \pm 1$ is the sign of the chord $c_{i,j}$, then periodicity implies that $\ep_{i,j} = \ep_{i,j+1}$, so we will simply write $\ep_i$. If $a_{k,\ell}$ denotes the arc on which the arrowfoot of $c_{i,j}$ lies, then by periodicity  $a_{k,\ell+1}$ is the arc on which the arrowfoot of $c_{i,j+1}$ lies. (Here, $\ell +1$ and $k+1$ are taken $\mod q.$)

With these assumptions, the Wirtinger relation of the crossing $c_{i,j}$ is given by
$$r_{i,j} = a_{k,\ell}^{-\ep_{i}} \, a_{i,j}^{ } \, a_{k,\ell}^{\ep_{i}} \, a_{i+1,j}^{-1} $$
for $i=1,\ldots, n-1$ and $j=0,\ldots, q-1.$
When $i=n$, we get the relation $r_{n,j} = a_{k,\ell}^{-\ep_{n}} \, a_{n,j}^{ } \,a_{k,\ell}^{\ep_{n}}a_{1,j}^{-1} .$

The resulting Wirtinger presentation of the $q$-periodic virtual knot $K$ is then
\begin{equation} \label{gp-pres-3}
G_K = \langle a_{i,j}  \mid r_{i,j} \rangle,
\end{equation} 
where $1 \leq i \leq n, \; 0 \leq j \leq q-1$ in \eqref{gp-pres-3}.
This presentation admits a $\ZZ/q$ symmetry, and the Wirtinger presentation for $K_*$ by obtained as the quotient by adding the relations $a_{i,0} = a_{i,1} = \cdots = a_{i,q-1}$ for $1 \leq i \leq n$, which gives the presentation
\begin{equation} \label{gp-pres-4}
G_{K_*} = \langle a_1, \ldots, a_n \mid r_{1}, \ldots, r_n \rangle,
\end{equation}  
where $a_i$ refers to the equivalence class $\{a_{i,0}, \ldots, a_{i,q-1}\}$ of generators and $r_i$ is the relation $a_{i+1}^{-1} a_{k}^{-\ep_{i}} \,a_{i}^{}\, a_{k}^{\ep_{i}}$ with $i+1$ taken $\mod n$.

\begin{theorem} \label{thm:jaco}
Let $K$ be a virtual knot diagram with period $q$, and let $K_*$ be its quotient knot. If $A$ and $B$ are the Jacobian matrices of the Wirtinger presentations \eqref{gp-pres-4} and \eqref{gp-pres-3} of $G_{K_*}$ and $G_K$, respectively, then 

\begin{equation} \label{eq:cm}
B=
\left[\begin{array}{cccc}
A_0 & A_1 & \cdots & A_{q-1} \\ 
A_{q-1} & A_0 & \cdots & A_{q-2} \\ 
\vdots & \ddots & \ddots & \vdots\\ 
A_1 & \cdots & A_{q-1}& A_0 
\end{array}\right]
\end{equation}
is a block circulant matrix, where $A_0, A_1,\ldots, A_{q-1}$ are square matrices satisfying $A_0 + A_1 + \cdots +A_{q-1} = A$.
\end{theorem}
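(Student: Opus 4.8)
The plan is to exploit the $\ZZ/q$ symmetry of the presentation \eqref{gp-pres-3} directly at the level of Fox derivatives. First I would partition the rows of $B$ into $q$ consecutive blocks of size $n$, grouping the relations $r_{1,j},\ldots,r_{n,j}$ for each fixed $j \in \{0,\ldots,q-1\}$, and likewise partition the columns into $q$ blocks grouping the generators $a_{1,j'},\ldots,a_{n,j'}$. With this ordering $B$ becomes a $q\times q$ array of $n\times n$ blocks, where the $(j,j')$ block $B^{(j,j')}$ has $(i,i')$ entry equal to the abelianized Fox derivative $\left[\partial r_{i,j}/\partial a_{i',j'}\right]$ coming from \eqref{eq-method1}.

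The heart of the argument is to show that $B^{(j,j')}$ depends only on $j'-j \bmod q$. Let $\rho$ denote the shift automorphism of the free group on the $a_{i',j'}$ defined by $a_{i',j'}\mapsto a_{i',j'+1}$ (second index $\bmod q$). Periodicity of the diagram says exactly that $\rho(r_{i,j})=r_{i,j+1}$ for all $i,j$. Since Fox derivatives are equivariant under generator-permuting automorphisms (the Fox chain rule applied to $\rho$), I obtain $\partial r_{i,j+1}/\partial a_{i',j'+1}=\rho\!\left(\partial r_{i,j}/\partial a_{i',j'}\right)$; because the abelianization sends every generator to $t$ it is $\rho$-invariant, so the abelianized entries of the $(j+1,j'+1)$ block coincide with those of the $(j,j')$ block. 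Hence $B^{(j,j')}$ depends only on $j'-j \bmod q$. Setting $A_m:=B^{(0,m)}$, so that $B^{(j,j')}=A_{(j'-j)\bmod q}$, reproduces the first block row $A_0,A_1,\ldots,A_{q-1}$ and the cyclically shifted lower rows of \eqref{eq:cm}, which is precisely the block circulant form.

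It remains to identify $A_0+\cdots+A_{q-1}$ with the quotient Jacobian $A$. Here I would use that the presentation \eqref{gp-pres-4} of $G_{K_*}$ is obtained from \eqref{gp-pres-3} by the collapsing homomorphism $\psi\colon a_{i',j'}\mapsto a_{i'}$, under which $\psi(r_{i,0})=r_i$. The Fox chain rule for $\psi$ gives $\partial r_i/\partial a_{i'}=\sum_{j'=0}^{q-1}\psi\!\left(\partial r_{i,0}/\partial a_{i',j'}\right)$, since $\psi$ sends each of $a_{i',0},\ldots,a_{i',q-1}$ to the single generator $a_{i'}$. Abelianizing, and using that $a_{i'}\mapsto t$ is compatible with $a_{i',j'}\mapsto t$ through $\psi$, turns the left side into the $(i,i')$ entry of $A$ and the right side into $\sum_{j'}(A_{j'})_{i,i'}$; this gives $A=A_0+\cdots+A_{q-1}$.

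The main obstacle I anticipate is bookkeeping rather than conceptual: one must pin down exactly which block each of the three nonzero terms of \eqref{eq-method1} lands in, paying particular attention to the wrap-around arcs, namely the relation $r_{n,j}$ whose outgoing arc crosses a fundamental-domain boundary, and the overarc term $a_{k,\ell}$ whose second index $\ell$ itself shifts with $j$. These edge cases must be checked to confirm that $\rho$ genuinely permutes the relations cyclically and that no entry secretly escapes the claimed block structure. Once shift-equivariance is verified on these cases, both the block circulant conclusion and the column-sum identity follow formally from the Fox chain rule, so the bulk of the work is organizational.
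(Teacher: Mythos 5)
Your proposal is correct, and it follows the same overall strategy as the paper's proof --- partition the rows and columns of $B$ into period blocks, show that the $(j,j')$ block depends only on $j'-j \bmod q$, and identify $A$ with the sum of the blocks in one block row --- but the mechanism you use to justify the two key facts is genuinely different. The paper argues entirely from the explicit abelianized entries in \eqref{eq-method1}: it writes out $B\left((i,j),(i',j')\right)$ case by case, uses the geometric fact that the arrowfoot of $c_{i,j+1}$ lies on $a_{k,\ell+1}$ to see the entries repeat under the diagonal shift $B_{i,j}=B_{i+n,j+n}$, and gets $\sum_k A_k = A$ by observing that the $i$-th row of $A$ is literally the sum of the $i$-th rows of the blocks of $B$. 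You instead invoke the functoriality of Fox calculus twice: equivariance under the shift automorphism $\rho$ gives the circulant structure, and the chain rule for the collapsing homomorphism $\psi\colon a_{i',j'}\mapsto a_{i'}$ gives $A=A_0+\cdots+A_{q-1}$. Your route is more conceptual and makes clear the result does not depend on the specific form of the Wirtinger relations; the paper's computational route has the advantage of exhibiting the blocks explicitly, which is used later (e.g.\ in \eqref{A0andA1} and in the proof of Theorem~\ref{virtmur-1}). The edge cases you flag --- that $\rho(r_{n,j})=r_{n,j+1}$ because the outgoing underarc of $c_{n,j}$ is $a_{1,j+1}$, and that the overarc index $\ell$ shifts equivariantly --- are precisely the points the paper checks by hand, so once you verify them your argument is complete.
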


\begin{proof}
Suppose $K$ is $q$-periodic and label the chords $c_{i,j}$ and arcs $a_{i,j}$ as above. In constructing the Jacobian matrix $B$ associated to
\eqref{gp-pres-3}, we order the rows to correspond with the chords $$c_{1,0}, \ldots, c_{n,0}, c_{1,1}, \ldots, c_{n,1} \ldots, c_{1,q-1} , \ldots, c_{n,q-1}$$ and the columns to correspond with the arcs 
$$a_{1,0}, \ldots, a_{n,0}, a_{1,1}, \ldots, a_{n,1}, \ldots, a_{1,q-1}, \ldots, a_{n,q-1}.$$ 
Then the entry of $B$ in the $(i,j)$-th row and $(i',j')$-th column, which is the entry in the row corresponding to $c_{i,j}$ and column
corresponding to $a_{i',j'}$, is given by
$$B{\left((i,j),(i',j')\right)}=\begin{cases}
t^{-\ep_i} & \text{if $(i',j')=(i,j)$,} \\
-1 & \text{if $(i',j')=(i+1,j)$, or if $i'=1, i=n,$ and $j'=j+1$,} \\
1-t^{-\ep_i} & \text{if $(i',j')=(k,\ell)$,}   \\
0  & \text{otherwise.}
\end{cases}$$
(Recall that $a_{k,\ell}$ is the arc on which the arrowfoot of $c_{i,j}$ lies.)

Notice that for $c_{i,j}$, the $t^{-\ep_i}$ entry will always be in the $j$-th column block of the $i$-th block row; the $-1$ term will always be in the $j$-th column block of the $i$-th block row, unless $i=n$, and then the $-1$ will sit in the $(j+1)$-st column block of the $i$-th block row. The $1-t^{-\ep_i}$ entry, on the other hand, can be in any column block of the $i$-th block row.
Also notice that if the foot of $c_{i,j}$ lies on $a_{k,\ell}$, then periodicity implies that the foot of $c_{i,j+1}$ lies on $a_{k,\ell+1}$ (with $j+1$, $\ell+1$ taken $\mod q$). 
For example, in the $i$-th block row of $B$, we will have
$$
\begin{blockarray}{ccccccccccccccccc}
& a_{i,0} & a_{i+1,0} & & a_{k,0} & a_{i,1} & a_{i+1,1} & & a_{k,1} & 
 a_{i,2} & a_{i+1,2} & & a_{k,2} &  \cdots    \\
\begin{block}{c[cccc|cccc|cccc|cccc]}
& & & & & & & & & & & & & & & & &    \\
c_{i,1} & t^{-\ep_i} & -1 & & & & & &  1-t^{-\ep_i} & & & & &   \\
& & & & & & & & & & & & & & & & &   \\ 
\cline{2-17}
& & & & & & & & & & & & & & & & &   \\
c_{i,2} & & & & & t^{-\ep_i} & -1 & & & & & & 1-t^{-\ep_i}   \\
& & & & & & & & & & & & & & & & &    \\ 
\cline{2-17}
& & & & & & & & & & & & & & & & &   \\
\vdots & & \cdots & & & & \cdots & & & & \cdots & &   \cdots \\ 
& & & & & & & & & & & & & & & & &   \\
\end{block}
\end{blockarray}
$$Notice that the matrix $B$ satisfies $B_{i,j} = B_{i+n,j+n}$, and therefore it is block circulant with $n \times n $ blocks of the desired form:
$$B=
\left[\begin{array}{cccc}
A_0 & A_1 & \cdots & A_{q-1} \\ 
A_{q-1} & A_0 & \cdots & A_{q-2} \\ 
\vdots & \ddots & \ddots & \vdots\\ 
A_1 & \cdots & A_{q-1}& A_0 
\end{array}\right].$$

Next we will show that $\sum_{k=0}^{q-1}A_k = A$, where $A$ is the Jacobian matrix for the quotient knot $K_*$. Notice that in the Wirtinger presentation \eqref{gp-pres-4} for $G_{K_*},$ the relations $r_i$ are obtained from the relations $r_{i,j}$ of \eqref{gp-pres-3} under setting $a_{i,0} = a_{i,1} = \cdots = a_{i,q-1}$ for $1 \leq i \leq n$. The Jacobian matrix for $G_{K_*}$ has $i,j$ entry

$$A(i,j) =\begin{cases}
 t^{-\ep_i} & \text{if $j=i$,} \\
-1 & \text{if $j=i+1 \mod n$,}\\
1-t^{-\ep_i} & \text{if $j=k$,} \\
0 & \text{otherwise.}
\end{cases}$$

Notice that the $i$-th row of $A$ is equal to the sum of the $i$-th rows of block matrices appearing in $B$, which is equivalent to the statement that $\sum_{k=0}^{q-1}A_k = A$. This completes the proof.
\end{proof}

\subsection{Periodicity and almost classical knots} \label{section-3-2}
Both periodicity and almost classicality are defined for virtual knots in terms of their representative diagrams, and it remains to show that we can find virtual knot diagrams that exhibit both properties at the same time. In this section, we use Manturov projection to show that if $K$ is a $q$-periodic virtual knot diagram representing an almost classical knot, then one can find a $q$-periodic almost classical diagram equivalent to $K$.
 
We begin with a few useful lemmas.

\begin{lemma} \label{Lemma-3-2-1}
Suppose $K$ is a virtual knot diagram representing an almost classical knot. Then its image $P_f(K)$ under Manturov projection is virtually isotopic to $K$.
\end{lemma}
\begin{proof}
Since $K$ represents an almost classical knot (but may not be Alexander numberable itself), by the definition of almost classical, we have a virtual knot diagram $K'$ which is Alexander numberable and virtually isotopic to $K$. Equivalently, the Gauss diagram $D'$ corresponding to $K'$ has all of its chords of index 0. Applying Proposition~\ref{Prop-Projection}, it follows that $P_f(K)$ is virtually isotopic to $P_f(K') = K'$, which is virtually isotopic to $K$.
\end{proof}

Note that in the above lemma, $P_f(K)$ need not be an almost classical diagram. In fact, even if $K$ represents an almost classical knot, $K$ and $P_f(K)$ may fail to be almost classical diagrams. On the other hand, for any virtual knot diagram $K$, its image $P_f^\infty(K)$ under stable projection is an almost classical diagram.

\begin{lemma} \label{Lemma-3-2-2}
Let $K$ be a $q$-periodic virtual knot diagram with quotient $K_*$. For $j=0,\ldots, q-1,$ let  $c_{1,j}, \ldots, c_{n,j}$ be the chords in the $j$-th period of the Gauss diagram $D_K$ for $K$, and let $c_{1}, \ldots, c_{n}$ be the corresponding chords in the Gauss diagram $D_{K_*}$ for $K_*$. Then the index satisfies $I(c_{i,j}) = I(c_i)$ for $i=1,\ldots, n$ and $j =0,\ldots, q-1.$ In particular, the index $I(c_{i,j})$ of a chord is independent of its period $j =0,\ldots, q-1.$  
\end{lemma}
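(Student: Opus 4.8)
The goal is to show that the index $I(c_{i,j})$ of a chord in the $q$-periodic Gauss diagram $D_K$ equals the index $I(c_i)$ of the corresponding chord in the quotient diagram $D_{K_*}$, and in particular that it does not depend on the period $j$. The key structural input is that the $\ZZ/q$ action on $D_K$ is a rotation by $2\pi/q$, so it acts as a bijection on the chords sending $c_{i,j}$ to $c_{i,j+1}$ and preserving the cyclic order of endpoints around the base circle, the writhes (since $\ep_{i,j}=\ep_i$), and the arrowhead/arrowtail data. I would organize the argument in two steps: first establish that the index is constant along each orbit, i.e. $I(c_{i,j})=I(c_{i,j'})$ for all $j,j'$; then identify that common value with the index $I(c_i)$ computed in $D_{K_*}$.

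\emph{Step 1: invariance along orbits.} The rotation $\rho$ by $2\pi/q$ is an orientation-preserving symmetry of the plane carrying the diagram to itself, hence it is an automorphism of the Gauss diagram $D_K$ that sends the chord $c_{i,j}$ to $c_{i,j+1}$. Because the index of a chord (Definition~\ref{chord-index}) is computed purely from the combinatorial intersection data of chords, their signs $\ep_j$, and the left/right position of each crossing arrowhead relative to the chord $c_i$ oriented vertically, and because $\rho$ preserves all of this data (it preserves the cyclic order of endpoints on the base circle, the writhes, and the arrow directions), applying $\rho$ gives $I(\rho(c_{i,j}))=I(c_{i,j})$, i.e. $I(c_{i,j+1})=I(c_{i,j})$. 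Iterating, $I(c_{i,j})$ is independent of $j$.

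\emph{Step 2: identifying the common value with $I(c_i)$.} Fix one period, say $j=0$, and compare the computation of $I(c_{i,0})$ in $D_K$ with that of $I(c_i)$ in $D_{K_*}$. The chords of $D_K$ that cross $c_{i,0}$ fall into two types: those in the same period ($j=0$) and those in other periods ($j\neq 0$). Under the quotient map collapsing all periods onto one fundamental domain, the chords $c_{m,j}$ in every period project to the single chord $c_m$ of $D_{K_*}$; I would verify that a chord $c_{m,j}$ intersects $c_{i,0}$ in $D_K$ precisely when the corresponding crossing pattern in the quotient contributes an intersection with $c_i$, and that the sign $\ep_m$ and the left/right arrowhead data are preserved under the quotient. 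Summing the contributions in Definition~\ref{chord-index} over all chords crossing $c_{i,0}$ in $D_K$ then matches the analogous signed count for $c_i$ in $D_{K_*}$, giving $I(c_{i,0})=I(c_i)$, and combined with Step 1 this yields $I(c_{i,j})=I(c_i)$ for all $j$.

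\emph{Main obstacle.} The delicate point is Step 2: making precise the bookkeeping that relates intersections of $c_{i,0}$ with chords from \emph{all} $q$ periods in $D_K$ to intersections of $c_i$ with chords in the single-period diagram $D_{K_*}$. One must check carefully that the quotient construction (closing up one fundamental domain with concentric arcs) neither creates nor destroys intersections in a way that alters the signed index, and that the ``arrowhead to the left/right'' attribute is correctly transported. I expect this to require a careful picture of how a chord winding around the base circle through several periods projects to a single chord in $D_{K_*}$, together with the observation that the linking/winding structure ensures the signed counts coincide. Once the correspondence of crossing data under the quotient is set up cleanly, the equality $I(c_{i,j})=I(c_i)$ follows directly from Definition~\ref{chord-index}.
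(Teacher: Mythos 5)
Your Step 1 (rotational symmetry forces $I(c_{i,j})$ to be independent of $j$) is correct, but it is not where the content of the lemma lies; the paper does not even need it separately. The real work is your Step 2, and there you have left the key point unproved and, more importantly, the intermediate claim you propose to verify is false as stated. It is not true that ``a chord $c_{m,j}$ intersects $c_{i,0}$ in $D_K$ precisely when the corresponding crossing pattern in the quotient contributes an intersection with $c_i$'': because a single chord of $D_K$ may have its two endpoints in different fundamental domains, a chord $c_m$ of $D_{K_*}$ that does \emph{not} intersect $c_i$ can have two of its lifts $c_{m,j}, c_{m,j'}$ each intersecting $c_{i,0}$ (one contributing with its arrowhead on one side, the other with its arrowhead on the other side), and conversely. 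The unsigned intersection counts upstairs and downstairs genuinely differ; only the \emph{signed} totals agree, and that cancellation is exactly what needs an argument. Saying ``the linking/winding structure ensures the signed counts coincide'' names the obstacle without overcoming it.

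The paper's proof supplies the missing idea by changing the way the index is computed. First reformulate Definition~\ref{chord-index}: $I(c_i)$ equals the signed count of chord endpoints (arrowheads counted $+\ep$, arrowtails counted $-\ep$) lying on the arc $\alpha_i$ of the base circle running from the arrowtail of $c_i$ to its arrowhead; chords with both or neither endpoint on $\alpha_i$ contribute zero, so this agrees with the original definition. Two facts then finish the proof. (1) The quotient map $\pi\colon D_K \to D_{K_*}$ is a $q$-fold covering of the base circle preserving signs and head/tail data, so the signed count along the lift $\widetilde\alpha_i$ of $\alpha_i$ starting at the arrowtail of $c_{i,j}$ equals $I(c_i)$; but $\widetilde\alpha_i$ terminates at the arrowhead of $c_{i,k}$ for some possibly different $k$. (2) The discrepancy between $\widetilde\alpha_i$ and the arc defining $I(c_{i,j})$ is an arc $\beta$ whose image $\pi(\beta)$ winds an integral number of times around $D_{K_*}$, and the signed count around the entire diagram is zero (each arrowhead cancels its own arrowtail), so the count along $\beta$ vanishes. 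Hence $I(c_{i,j}) = I(c_i)$. Without this reformulation and the winding/cancellation observation, your Step 2 does not go through.
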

\begin{proof}
Let $\pi: D_K \to D_{K_*}$ be the mapping of Gauss diagrams. It is a covering map of oriented trivalent graphs preserving the signs.

According to Definition~\ref{chord-index}, the index of $c_{i}$ is given by counting the arrowheads and arrowtails with sign along the arc $\alpha_i$ of $D_{K_*}$ from the arrowtail of $c_{i}$ to its arrowhead. One can perform this computation upstairs in $D_K$ after lifting $\alpha_i$ under $\pi$. If $\widetilde{\alpha}_i$ denotes the lift starting at the arrowtail of $c_{i,j}$, then it will end at the arrowhead of $c_{i,k}$ for some $k=0,\ldots, q-1.$ The index along $\widetilde{\alpha}_i$ differs from the index of $c_{i,j}$ by a similar count along an arc $\beta$ of $D_K$ from the arrowhead of $c_{i,k}$ to the arrowhead of $c_{i,j}.$ Taking its image $\pi(\beta)$ under $\pi$, we obtain an arc that winds around $D_{K_*}$ $|j-k|$ times (because of the periodicity), and consequently the index along $\beta$ is necessarily zero. (The index around an entire diagram will always be zero because the all arrowheads will cancel with their arrowtails in the sum). It follows that $I(c_i) = I(c_{i,j})$ for $j=0,\ldots, q-1,$ and this completes the proof.
\end{proof}

\begin{corollary}
\label{ACquotient}
If $K$ is a $q$-periodic almost classical diagram with quotient $K_*$, then $K_*$ is also almost classical.  \end{corollary}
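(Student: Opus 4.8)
The plan is to deduce this directly from Lemma~\ref{Lemma-3-2-2} together with the index characterization of almost classicality recorded earlier in the excerpt, namely that a Gauss diagram $D$ represents an almost classical (equivalently, Alexander numberable) virtual knot diagram if and only if every chord $c_i$ of $D$ satisfies $I(c_i) = 0$. Since almost classicality of a \emph{diagram} is a purely combinatorial condition on its Gauss diagram, the whole argument takes place at the level of indices of chords.

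First I would invoke the hypothesis: $K$ is a $q$-periodic \emph{almost classical diagram}, so by the index characterization every chord of its Gauss diagram $D_K$ has index zero. Writing the chords of $D_K$ as $c_{i,j}$ with $1 \le i \le n$ and $0 \le j \le q-1$, as in the setup preceding Lemma~\ref{Lemma-3-2-2}, this says $I(c_{i,j}) = 0$ for all $i$ and $j$. Next I would apply Lemma~\ref{Lemma-3-2-2}, which asserts $I(c_i) = I(c_{i,j})$ where $c_1,\ldots,c_n$ are the chords of the quotient Gauss diagram $D_{K_*}$ corresponding to one fundamental period. Combining these gives $I(c_i) = 0$ for every $i = 1,\ldots,n$, so every chord of $D_{K_*}$ has index zero. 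By the index characterization again, $D_{K_*}$ is Alexander numberable, hence $K_*$ is represented by an almost classical diagram and is therefore almost classical.

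There is essentially no real obstacle here, as the substance has already been absorbed into Lemma~\ref{Lemma-3-2-2}; the only point requiring a moment's care is bookkeeping. One must make sure that the chords $c_1,\ldots,c_n$ of $D_{K_*}$ are exactly the images under the covering map $\pi\colon D_K \to D_{K_*}$ of the chords $c_{i,0},\ldots,c_{i,q-1}$, so that the correspondence used in Lemma~\ref{Lemma-3-2-2} accounts for \emph{all} chords of the quotient and not merely some of them. Once this is noted, vanishing of every $I(c_i)$ is immediate and the conclusion follows.
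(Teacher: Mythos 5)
Your argument is correct and is exactly the intended one: the paper states this as an immediate consequence of Lemma~\ref{Lemma-3-2-2} combined with the index characterization of almost classical diagrams ($I(c_i)=0$ for all chords), which is precisely the route you take. The bookkeeping point you flag about the covering map $\pi\colon D_K \to D_{K_*}$ matching all chords of the quotient is already built into the labelling convention set up before the lemma, so nothing further is needed.
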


\begin{lemma} \label{Lemma-3-2-3}
Suppose $K$ is a $q$-periodic virtual knot diagram. Then its image $P_f(K)$ under Manturov projection is also $q$-periodic.
\end{lemma}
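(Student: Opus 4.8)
The plan is to exploit the fact that Manturov projection removes exactly the odd chords, together with Lemma~\ref{Lemma-3-2-2}, which guarantees that the parity of a chord does not depend on which period it lies in. Concretely, recall that for the total Gaussian parity $f$, a chord $c_{i,j}$ of the Gauss diagram $D_K$ is odd precisely when its index $I(c_{i,j})$ is nonzero, and that $P_f$ is defined by deleting all odd chords.

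First I would set up the $\ZZ/q$-action explicitly. Since $K$ is $q$-periodic, the Gauss diagram $D_K$ is invariant under the rotation $\rho$ by $2\pi/q$, which acts on chords by $c_{i,j} \mapsto c_{i,j+1}$ (with $j+1$ taken $\mod q$) and which fixes the base circle and preserves the signs $\ep_i$. Next I would invoke Lemma~\ref{Lemma-3-2-2} to conclude that $I(c_{i,j}) = I(c_i)$ for all $j$, and hence that $f(c_{i,j}) = f(c_i)$ is independent of $j$. This shows that the set of odd chords of $D_K$ is a union of $\rho$-orbits: if one chord in an orbit $\{c_{i,0},\ldots,c_{i,q-1}\}$ is odd, then every chord in that orbit is odd, and similarly for even chords.

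Then I would observe that deleting a $\rho$-invariant collection of chords preserves $\rho$-invariance. The base circle is untouched by the deletion and still carries the rotation $\rho$, and the surviving chords are permuted among themselves by $\rho$ precisely because we have removed complete orbits. Therefore $P_f(D_K) = \rho\bigl(P_f(D_K)\bigr)$; that is, the projected Gauss diagram is again invariant under rotation by $2\pi/q$, which is exactly the statement that $P_f(K)$ is $q$-periodic.

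Given Lemma~\ref{Lemma-3-2-2}, the argument is essentially immediate, so there is no serious obstacle. The only point requiring minor care is verifying that the rotational symmetry of the base circle genuinely descends to the sub-Gauss-diagram after chord removal; this holds because we delete entire $\rho$-orbits rather than partial orbits, so no asymmetry is introduced and the remaining configuration of chords is still carried to itself by $\rho$.
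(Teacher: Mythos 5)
Your proof is correct and follows essentially the same route as the paper: both arguments rest on Lemma~\ref{Lemma-3-2-2} to conclude that the odd crossings form complete $\ZZ/q$-orbits, so that removing them (equivalently, replacing them with virtual crossings) preserves the rotational symmetry. The only cosmetic difference is that you phrase the deletion at the level of Gauss diagrams while the paper phrases it at the level of the virtual knot diagram; the substance is identical.
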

\begin{proof}
 At the level of the virtual knot diagram, Manturov projection is the process of replacing all of the odd (real) crossings with virtual crossings. 
 By the previous lemma, if $K$ is $q$-periodic and has an odd crossing, then so is every other crossing in its $\ZZ/q$-orbit.  This fact ensures that if $K$ is $q$-periodic, then so is $P_f(K).$   
\end{proof}

\begin{theorem}
\label{ACrep}
If $K$ is a $q$-periodic virtual knot diagram which represents an almost classical knot, then $\bar{K}=P_f^\infty(K)$ is a $q$-periodic almost classical diagram representing the same virtual knot. 
\end{theorem}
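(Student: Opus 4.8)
The plan is to verify the three required properties of $\bar{K} = P_f^\infty(K)$ separately—almost classicality, $q$-periodicity, and that $\bar{K}$ represents the same virtual knot as $K$—by assembling the three preceding lemmas through a finite induction on the number of projections. Recall from the discussion preceding Proposition~\ref{Prop-Projection} that the stabilization is genuine: there is a smallest integer $\ell \ge 0$ with $P_f^{\ell+1}(K) = P_f^\ell(K)$, and $\bar{K} = P_f^\ell(K)$. Thus every claim about $\bar{K}$ reduces to a statement about the finite sequence $K, P_f(K), \ldots, P_f^\ell(K)$.

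Almost classicality of $\bar{K}$ is immediate: as established just before Proposition~\ref{Prop-Projection}, the image of any virtual knot diagram under stable Manturov projection has only even chords and is therefore almost classical. For $q$-periodicity, I would argue by induction on $n$ that $P_f^n(K)$ is $q$-periodic. The base case $n=0$ is the hypothesis on $K$, and the inductive step is exactly Lemma~\ref{Lemma-3-2-3} applied to $P_f^{n-1}(K)$. Taking $n = \ell$ shows $\bar{K} = P_f^\ell(K)$ is $q$-periodic.

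For the statement that $\bar{K}$ represents the same virtual knot as $K$, I would again induct on $n$, showing that $P_f^n(K)$ is virtually isotopic to $K$. Here the crucial observation is that the property of representing an almost classical knot is an invariant of the virtual knot (it depends only on the isotopy class, by Definition~\ref{AN-Def}(ii)), so every diagram virtually isotopic to $K$ again represents an almost classical knot. Granting inductively that $P_f^{n-1}(K)$ is virtually isotopic to $K$, this diagram represents an almost classical knot, so Lemma~\ref{Lemma-3-2-1} applies and gives that $P_f^n(K) = P_f\big(P_f^{n-1}(K)\big)$ is virtually isotopic to $P_f^{n-1}(K)$, hence to $K$. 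At $n=\ell$ this yields that $\bar{K}$ is virtually isotopic to $K$.

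The only genuinely delicate point is the induction in the last paragraph: Lemma~\ref{Lemma-3-2-1} requires its input diagram to represent an almost classical knot, so the argument would collapse if the intermediate projections $P_f^n(K)$ could cease to represent an almost classical knot. This is precisely where the invariance of almost classicality under virtual isotopy must be invoked; with that in hand the three lemmas combine without further calculation.
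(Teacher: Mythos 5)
Your proof is correct and follows essentially the same route as the paper: the paper's proof is exactly ``repeated application of Lemma~\ref{Lemma-3-2-1}'' for the virtual isotopy statement and ``repeated application of Lemma~\ref{Lemma-3-2-3}'' for the $q$-periodicity, with almost classicality of the stable image coming from the general discussion preceding Proposition~\ref{Prop-Projection}. Your explicit handling of the delicate point --- that the intermediate diagrams $P_f^n(K)$ continue to represent an almost classical knot because that property is an invariant of the virtual isotopy class --- is precisely what makes the ``repeated application'' legitimate, and is a worthwhile clarification rather than a deviation.
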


\begin{proof}
Since $K$ represents an almost classical knot, repeated application of Lemma~\ref{Lemma-3-2-1} implies that $P_f^\infty(K)$ is virtually isotopic to $K$. On the other hand, since $K$ is $q$-periodic, repeated application of Lemma~\ref{Lemma-3-2-3} ensures that $P_f^\infty(K)$ is also $q$-periodic. That completes the proof of the theorem.
\end{proof}

The next proposition is a slightly  more general result along the same lines.

\begin{proposition}  \label{prop-ManProj}
Suppose $K$ is a $q$-periodic virtual knot diagram. Then its image  $P_{f}^\infty(K)$ under stable Manturov projection is an almost classical $q$-periodic knot diagram. 
\end{proposition}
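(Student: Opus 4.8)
The plan is to observe that this proposition is strictly weaker in both hypothesis and conclusion than Theorem~\ref{ACrep}, so it should follow immediately from the machinery already in place, without invoking Lemma~\ref{Lemma-3-2-1}. Theorem~\ref{ACrep} carried the extra assumption that $K$ \emph{represents} an almost classical knot precisely so that, via Lemma~\ref{Lemma-3-2-1}, the stable projection could be shown virtually isotopic to $K$. Here no claim is made about preserving the virtual knot type, so Lemma~\ref{Lemma-3-2-1} is not needed and only two ingredients remain: that the stable projection is almost classical, and that periodicity survives each projection step.

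First I would establish almost classicality. By the general stabilization property of Manturov projection recalled earlier in Section~\ref{sec:Preliminaries}, for any virtual knot diagram there is a finite $\ell \ge 0$ at which $P_f$ stabilizes, meaning $P_f^{\ell+1}(K) = P_f^\ell(K)$, and the stable image $\bar{K} = P_f^\infty(K) = P_f^\ell(K)$ has only even chords. A Gauss diagram with only even chords satisfies $I(c_i) = 0$ for every chord, which is exactly the condition for admitting an Alexander numbering; hence $\bar{K}$ is an almost classical diagram. This step uses no periodicity hypothesis whatsoever.

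Next I would establish $q$-periodicity of $\bar{K}$ by induction on the number of projection steps. Lemma~\ref{Lemma-3-2-3} asserts that a single application of $P_f$ sends a $q$-periodic diagram to a $q$-periodic one; the key input there is that parity is constant along each $\ZZ/q$-orbit of crossings, so odd crossings are deleted orbit-by-orbit and the surviving diagram retains its rotational symmetry. Since $K$ is $q$-periodic, $P_f(K)$ is $q$-periodic, and because $P_f(K)$ is itself a $q$-periodic virtual knot diagram we may apply Lemma~\ref{Lemma-3-2-3} again to conclude $P_f^2(K)$ is $q$-periodic; iterating gives that $P_f^m(K)$ is $q$-periodic for every $m$. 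Taking $m = \ell$ from the previous paragraph, $\bar{K} = P_f^\ell(K)$ is both almost classical and $q$-periodic, which is the claim.

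I expect the only point needing care is to confirm that the induction is genuinely legitimate: Lemma~\ref{Lemma-3-2-3} is stated for a single step of $P_f$, so I must check that each intermediate diagram $P_f^m(K)$ again satisfies that lemma's hypotheses, namely that it is a bona fide $q$-periodic virtual knot diagram and not merely a Gauss diagram carrying a formal symmetry. This reduces to verifying that the invariant symmetry preserved by Lemma~\ref{Lemma-3-2-3} is again realized as a geometric rotation by $2\pi/q$, so that the periodic structure persists in a form to which the lemma reapplies. Beyond this bookkeeping there is no substantive obstacle; the proposition is essentially a corollary of the stabilization fact together with Lemma~\ref{Lemma-3-2-3}.
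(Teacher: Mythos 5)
Your argument is exactly the paper's: the published proof of Proposition~\ref{prop-ManProj} consists precisely of repeated application of Lemma~\ref{Lemma-3-2-3} together with the general fact that $P_f^\infty(K')$ is an almost classical diagram for any virtual knot diagram $K'$. Your version simply spells out the induction and the stabilization step in more detail, and your observation that Lemma~\ref{Lemma-3-2-1} is not needed here is correct.
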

\begin{proof}
This follows by repeated application of Lemma~\ref{Lemma-3-2-3}, together with the fact that $P_{f}^\infty(K')$ is an almost classical diagram for any virtual knot diagram $K'$.   
\end{proof}

%

Even though Manturov projection $P_f$ is defined at the level of virtual knot diagrams, Proposition~\ref{Prop-Projection} ensures that it is a well-defined operation on the level of virtual knots.  The next corollary will allow us to eliminate periods for a general virtual knot $K$ by applying the Murasugi conditions to the almost classical knot $\bar{K} = P_f^\infty(K)$ obtained by stable projection.

\begin{corollary} \label{Cor-not-per}
Let $K$ be a virtual knot, and $\bar{K} = P_f^\infty(K)$ be the associated almost classical knot obtained by stable Manturov projection. If $\bar{K}$ does not admit a $q$-periodic diagram, then neither does $K$.
\end{corollary}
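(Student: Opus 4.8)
The plan is to prove the contrapositive: assuming that $K$ admits a $q$-periodic diagram, I will produce a $q$-periodic diagram representing $\bar{K}$. The entire argument rests on the two facts established immediately above, namely that stable Manturov projection sends a $q$-periodic diagram to a $q$-periodic almost classical diagram (Proposition~\ref{prop-ManProj}), together with the fact that $P_f$ descends to a well-defined operation on virtual knots (Proposition~\ref{Prop-Projection}).

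First I would fix a $q$-periodic virtual knot diagram $D$ representing $K$; such a $D$ exists by the hypothesis of the contrapositive. Applying Proposition~\ref{prop-ManProj} directly to $D$ yields that $P_f^\infty(D)$ is an almost classical $q$-periodic virtual knot diagram. The remaining task is to identify the virtual knot that this diagram represents. Since $D$ represents $K$, repeated application of Proposition~\ref{Prop-Projection} shows that $P_f^n(D)$ is virtually isotopic to $P_f^n(K)$ for every $n \geq 0$; passing to the stabilized projection, $P_f^\infty(D)$ is virtually isotopic to $P_f^\infty(K) = \bar{K}$. Hence $P_f^\infty(D)$ is a $q$-periodic diagram of $\bar{K}$, so $\bar{K}$ admits a $q$-periodic diagram. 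This is precisely the contrapositive of the assertion, completing the proof.

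The only real subtlety, and the step I expect to require the most care, is the well-definedness of $P_f^\infty$ as an operation on virtual knots rather than on diagrams: one must verify that iterating the single-step invariance of Proposition~\ref{Prop-Projection} is compatible with stabilization, i.e.\ that two virtually isotopic diagrams not only have virtually isotopic projections at each finite stage but also stabilize to virtually isotopic almost classical diagrams. Because the finite-stage projections $P_f^n(D)$ and $P_f^n(K)$ remain virtually isotopic for all $n$, and each such sequence eventually stabilizes, the limiting diagrams are virtually isotopic and no genuine difficulty arises. Everything else reduces to a direct invocation of the preceding propositions.
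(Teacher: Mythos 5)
Your proposal is correct and is exactly the argument the paper intends: the corollary is stated without proof as an immediate consequence of Proposition~\ref{prop-ManProj} (a $q$-periodic diagram projects to a $q$-periodic almost classical diagram) and the well-definedness of $P_f$ on virtual knots from Proposition~\ref{Prop-Projection}, which is precisely your contrapositive argument. Your added remark on the compatibility of stabilization with virtual isotopy is a reasonable point of care and is handled correctly by taking $n$ beyond both stabilization indices.
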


We derive a formula for the writhe polynomial for periodic virtual knots.
Let $K$ be a virtual knot with Gauss diagram $D$,
and let $w_n(D) = \sum_{I(c)=n} \ep(c)$ be the $n$-writhe of $D$, which is an invariant of the virtual knot $K$ for $n \neq 0$  (see \cite{Satoh-Taniguchi-2014}). The writhe polynomial is defined by setting
$$W_K(t) = \sum_{n \in \ZZ} w_n(D)\, t^n -{\rm Wr}(D),$$
where ${\rm Wr}(D)$ is the total writhe of $D$. In \cite{Cheng-Gao}, Cheng and Gao show that the writhe polynomial $W_K(t)$ is an invariant of the virtual knot $K$, and in \cite{Bae-Lee}, Bae and Lee give a formula for $W_K(t)$ for periodic virtual knots. The next result recovers their formula as an immediate consequence of Lemma \ref{Lemma-3-2-2}.
\begin{proposition}  
If $K$ is a q-periodic virtual knot with quotient $K_*$, then its writhe polynomial satisfies $W_K(t) =  q \cdot W_{K_*}(t).$ 
\end{proposition}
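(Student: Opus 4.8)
The plan is to compute both $W_K(t)$ and $W_{K_*}(t)$ directly from a $q$-periodic Gauss diagram and its quotient, invoking Lemma~\ref{Lemma-3-2-2} as the essential input. Since $W_K(t)$ is a virtual knot invariant (by Cheng and Gao, \cite{Cheng-Gao}), I am free to work with the $q$-periodic diagram $D = D_K$ and its quotient $D_* = D_{K_*}$. Following the notation set up in \S\ref{sec-3-1}, write the $qn$ chords of $D$ as $c_{i,j}$ for $1 \le i \le n$ and $0 \le j \le q-1$, and let $c_1, \ldots, c_n$ be the corresponding chords of $D_*$.

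The first step is to record the two facts that make the periodic structure transparent. By periodicity the sign function is $\ZZ/q$-invariant, so $\ep(c_{i,j}) = \ep(c_i)$ for every $j$; this was already noted when setting up the periodic Wirtinger presentation (writing $\ep_{i,j} = \ep_i$). By Lemma~\ref{Lemma-3-2-2}, the index is likewise $\ZZ/q$-invariant: $I(c_{i,j}) = I(c_i)$ for all $i$ and $j$. Thus each chord $c_i$ of the quotient lifts to exactly $q$ chords $c_{i,0}, \ldots, c_{i,q-1}$ upstairs, all carrying the same index and the same sign.

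The second step is the computation of the $n$-writhe. For fixed $n$,
\[
w_n(D) = \sum_{I(c_{i,j}) = n} \ep(c_{i,j}) = \sum_{i \,:\, I(c_i) = n} \, \sum_{j=0}^{q-1} \ep(c_i) = q \sum_{i \,:\, I(c_i) = n} \ep(c_i) = q\, w_n(D_*),
\]
using the two invariance statements from the first step. The identical bookkeeping applied to the full sum of signs gives ${\rm Wr}(D) = q\,{\rm Wr}(D_*)$. Substituting into the definition of the writhe polynomial then yields
\[
W_K(t) = \sum_{n \in \ZZ} w_n(D)\, t^n - {\rm Wr}(D) = q\left( \sum_{n \in \ZZ} w_n(D_*)\, t^n - {\rm Wr}(D_*) \right) = q\, W_{K_*}(t),
\]
as claimed.

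There is essentially no obstacle here: the entire content is packaged in Lemma~\ref{Lemma-3-2-2}, and the remaining work is the observation that the $q$-to-one covering $D \to D_*$ of Gauss diagrams preserves both sign and index, so every term in the defining sum for $W_K$ is counted exactly $q$ times relative to $W_{K_*}$. The only point requiring any care is to confirm that the signs (and not merely the indices) descend to the quotient, but this is immediate from the sign-invariance of the periodic action, so the argument is short and direct.
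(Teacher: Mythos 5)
Your argument is correct and is exactly the one the paper intends: the paper states the proposition without proof, declaring it an immediate consequence of Lemma~\ref{Lemma-3-2-2}, and your write-up simply makes explicit the $q$-to-one, sign- and index-preserving covering $D_K \to D_{K_*}$ that the lemma provides. The only (harmless) quibble is the reuse of the letter $n$ both for the number of chords per period and for the index value in the sum defining $w_n$.
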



\subsection{Periodic virtual braids}

In this section, we will show that every periodic virtual knot diagram $K$ can be realized as the closure of a periodic braid; in other words, $K = \widehat{\beta^q}$ for some virtual braid $\beta$. It can be viewed as an equivariant version of Alexander's theorem, and it is proved via an equivariant braiding process. Whenever we have $K = \widehat{\beta^q}$, it is clear that the linking number equals the braid index and that the quotient knot is given by $K_* = \widehat{\beta}$.

\begin{theorem}\label{mperiod}
(i) A virtual knot diagram $K$ is $q$-periodic if and only if there exists a $q$-periodic Gauss code representing it. \\
(ii) A virtual knot diagram $K$ is $q$-periodic if and only if 
it can be realized as the closure of the $q$-periodic braid, that is, $K=\widehat{\beta^q}$ for some $\beta \in \VB_k$.
\end{theorem}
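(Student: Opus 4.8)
The plan is to prove both biconditionals by establishing the two nontrivial directions; the reverse implications are essentially immediate. For part (i), one direction is trivial: if $K$ is a $q$-periodic virtual knot diagram, then as already noted in Section~\ref{sec-3-1}, reading its crossings in the order their arrowheads are encountered produces a Gauss code whose \texttt{O/U} and \texttt{+/-} patterns repeat with period $q$, and the rotational symmetry guarantees that the correspondence \texttt{Oi}$\mapsto$\texttt{Oj} is compatible with \texttt{Ui}$\mapsto$\texttt{Uj}; so the Gauss code is $q$-periodic. For the converse, I would start from an abstract $q$-periodic Gauss code and invoke the fact (recalled in Section~\ref{virtualknots}, due to Goussarov--Polyak--Viro \cite{GPV}) that every Gauss code is realizable by a virtual knot diagram. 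The point to verify is that the realization can be made \emph{equivariantly}: one realizes a single fundamental block of $n$ chords as a tangle in a pie-shaped sector of angle $2\pi/q$, then copies it by rotation into the remaining $q-1$ sectors and connects the sectors consistently with the cyclic structure of the code. Because the code dictates exactly how arrowheads in one period connect to the next, the resulting planar diagram is invariant under rotation by $2\pi/q$ about the origin, hence $q$-periodic by definition.

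For part (ii), the reverse direction is again easy: if $K=\widehat{\beta^q}$, then stacking $q$ copies of $\beta$ around a central axis and closing up produces a diagram invariant under rotation by $2\pi/q$, so $K$ is $q$-periodic, with quotient $K_*=\widehat{\beta}$ and linking number equal to the braid index. The substantive direction is that every $q$-periodic diagram $K$ arises this way. Here the natural strategy is to run the braiding algorithm of Theorem~\ref{Thm-Alex} \emph{equivariantly}. By part (i), $K$ has a $q$-periodic Gauss code $C_K$ built from $n$ crossings per period, and I would apply the braiding construction not to the whole code but to a single fundamental block, producing a virtual braid $\beta$ on $2n$ strands (or $k$ strands after cleanup) whose ``block'' faithfully reproduces the Gauss code of one period. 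The $\ZZ/q$ symmetry of the code then translates into the statement that the full braid obtained by the algorithm on $C_K$ is exactly $\beta^q$: each period of the code contributes an identical block of classical crossings in the same order, and the connecting virtual strands between successive blocks are forced by periodicity to be the same permutation in each period. Taking the closure recovers $K$ up to detour moves, so $K=\widehat{\beta^q}$.

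The key technical steps, in order, are: (1) record precisely how a $q$-periodic Gauss code determines a repeating block of \texttt{O/U} and sign data together with a fixed inter-period connection pattern; (2) show the GPV realization respects this repetition, giving an equivariant diagram for part (i); and (3) show the braiding algorithm of Theorem~\ref{Thm-Alex}, applied to $C_K$, outputs a braid word that literally factors as $\beta^q$, because the crossing blocks and the connecting permutations coincide period by period. The main obstacle I anticipate is step (3): verifying that the \emph{connecting virtual crossings}---which the braiding algorithm inserts to route outgoing arcs to their targets---also repeat with period $q$. The permutation recording these connections is determined by where \texttt{Oi}/\texttt{Ui} in one period link to the next, and while the periodicity of the code guarantees this permutation commutes with the cyclic shift, one must check that it genuinely decomposes as the $q$-th power of a single braid rather than merely being invariant under the shift. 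Concretely, the task is to confirm that the permutation of the $2n$ strands induced by one period equals the relevant block permutation, so that stacking $q$ copies reproduces the full connection pattern; I would handle this by tracking the strand routing explicitly in a fundamental domain and appealing to the compatibility of the periodic transformation with the crossing labels, as set up in the discussion preceding \eqref{gp-pres-3}.
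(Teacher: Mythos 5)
Your proposal is correct and follows essentially the same route as the paper: the paper proves (i) and (ii) simultaneously with a single equivariant construction, placing the $n$ crossings of each period in a pie-shaped sector (rotated copies of one another) and connecting them equivariantly by arcs that wind monotonically about the origin, so that the diagram is manifestly $q$-periodic and cutting along a ray exhibits it as $\widehat{\beta^q}$. The one caveat is that the algorithm of Theorem~\ref{Thm-Alex} as literally stated places all $qn$ crossings in a single row on $2qn$ strands and therefore does not by itself output a word factoring as $\beta^q$; what is needed (and what you in effect describe in your ``fundamental block'' discussion) is the equivariant modification in which the $j$-th period's crossings occupy the $j$-th sector, whence the cut-open annular diagram is a $q$-fold stack of identical blocks.
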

 
\begin{proof}
For both (i) and (ii), one direction is clear. For instance, if $K$ is a $q$-periodic virtual knot diagram, then its corresponding Gauss code  is obviously $q$-periodic. Likewise, if $K =\widehat{\beta^q}$ is the closure of a periodic braid, then obviously $K$ is itself a $q$-periodic virtual knot diagram.

To show the other directions, we will construct a  periodic virtual knot diagram $K$ from a $q$-periodic Gauss code $C$. In the construction, we will further arrange that the arcs wind monotonically around the origin, thus it will follow that the virtual knot diagram we construct is in fact the closure of a periodic virtual braid.

Assume then that $C$ is a $q$-periodic Gauss code, so its Gauss diagram will then have $qn$ chords, which we list  
$$c_{1,0}, \ldots, c_{n,0}, \; c_{1,1}, \ldots, c_{n,1}, 
\ldots, c_{1,q-1}, \ldots, c_{n,q-1}$$ 
in the order in which their overcrossings are encountered in $C$. Because $C$ is $q$-periodic, we can assemble them 

$$\begin{array}{ccc}
c_{1,0} & \cdots & c_{n,0} \\
\vdots & & \vdots \\
c_{1,q-1} & \cdots & c_{n,q-1}
\end{array}
$$ 
so that the periodic action is the vertical shift sending $c_{i,j}$ to $c_{i,j+1}$ for $j = 0,\ldots, q-1$, with $j+1$ taken $\mod q$ (so if $j=q-1$, then $j+1$ equals 0).

To draw the periodic virtual knot diagram, we draw the crossings $c_{i,j}$ in the plane according to the sign $\ep_{i}$ (which recall by periodicity is independent of $j$) and such that $c_{i,j}$ goes to $c_{i,j+1}$ under a $2\pi/q$ rotation of the plane.
  
To achieve that, draw the crossings $c_{1,0}, c_{1,1}, \ldots, c_{1,q-1}$ equally spaced around a circle, making sure the crossings are all right-handed if $\ep_1=1$ and left-handed if $\ep_1=-1$. Drawing them equally spaced will ensure that each $c_{1,j}$  is sent to $c_{1,j+1}$ under the rotation of $2\pi/q$.


We then do the same for $c_{2,0}, \ldots, c_{2,q-1}$, making sure they are equally spaced, then for $c_{3,0}, \ldots, c_{3,q-1}$, and so on. To ensure that the virtual knot we construct is the closure of a virtual braid, we draw each crossing so that its arcs are oriented clockwise with respect to the origin. This is easy to arrange, for instance by drawing all of $c_{1,0}, \ldots, c_{n,q}$ oriented downwards to the right of the origin, and rotating by an angle of $2\pi j /q$ before drawing the other crossings $c_{i,j}$, see Figure~\ref{Fig-Thm3_4}.

\begin{figure}[h]
\begin{center}
\includegraphics[scale=0.8]{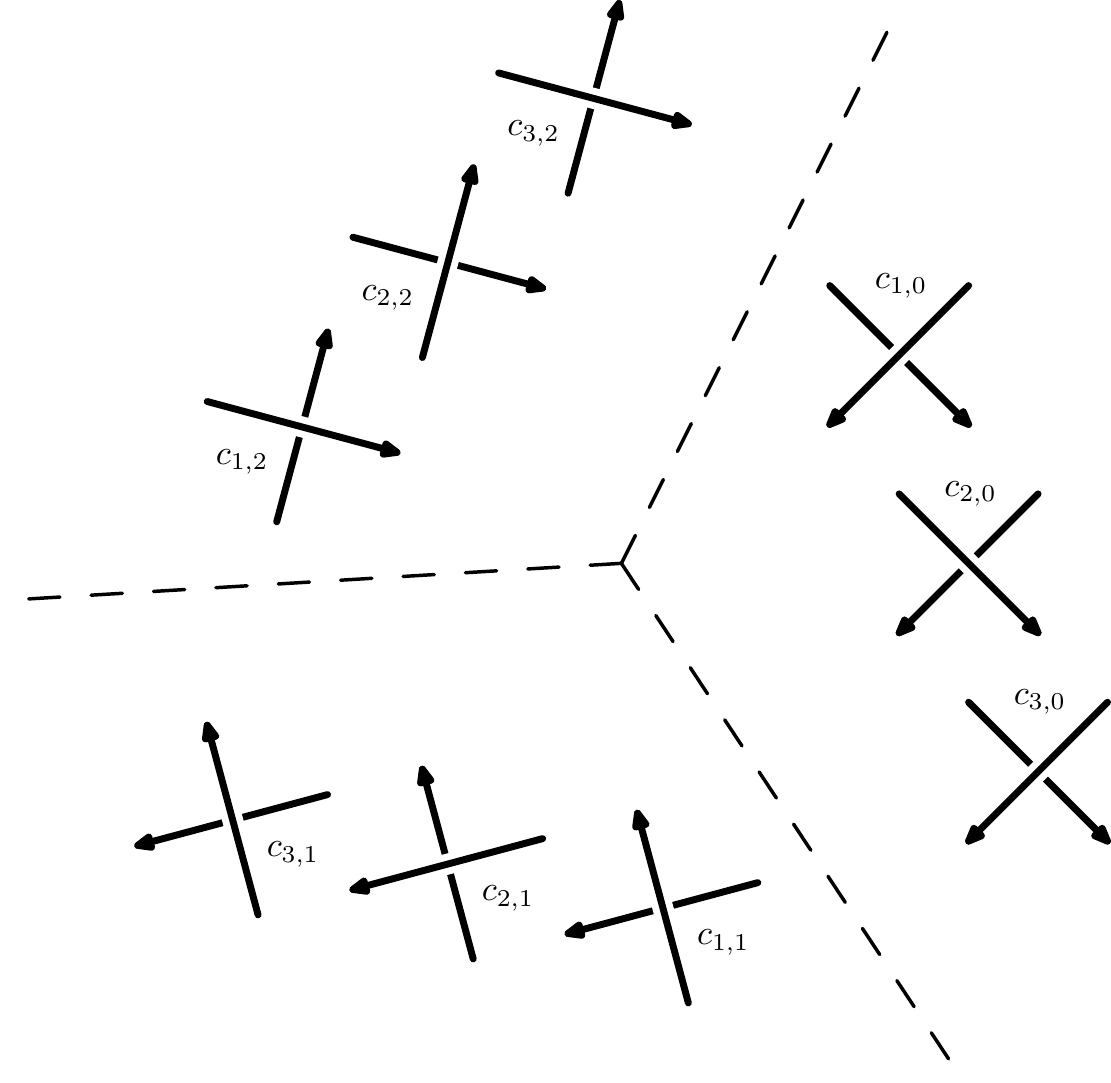}
\end{center} \label{Fig-Thm3_4}
\end{figure}

The result is that we have drawn all $qn$ crossings $$c_{1,0}, \ldots, c_{n,0}, c_{1,1}, \ldots, c_{n,1}, 
\ldots, c_{1,q-1}, \ldots, c_{n,q-1}$$ symmetrically, and we complete the diagram using an equivariant braiding process. For instance, reading the first segment of the Gauss code tells us to connect either the over or under-crossing arc from the first crossing $c_{1,0}$ to one of the arcs of another crossing, say $c_{i,j}$. By periodicity, the same arc of $c_{1,\ell}$ will be connected to the corresponding arc of $c_{i,j+\ell}$, with $j+\ell$ taken $\mod q.$ Thus, in total there will be $q$ connecting arcs, and we draw them equivariantly with respect to the $\ZZ/q$ action. This guarantees the resulting virtual knot diagram will be $q$-periodic.

To ensure we end up with the closure of a virtual braid, we draw the connecting arcs so they wind monotonically around the origin. This process will typically produce a large number of additional crossings, all of which are taken to be virtual crossings of the resulting periodic virtual knot diagram. See Example~\ref{ex-pvb} to see this process carried out for the pretzel knot $9_{35}.$
\end{proof}

The next result is a consequence of Theorems~\ref{ACrep} and~\ref{mperiod}.
\begin{corollary}
\label{ACbraidrep}
If the virtual knot $K$ is $q$-periodic and almost classical, then it can be represented as $K=\widehat{\beta^q},$ the closure of a $q$-periodic braid $\beta \in \VB_k$ that admits an Alexander numbering.
\end{corollary}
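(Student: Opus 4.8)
The plan is to chain together the two cited theorems and then verify the additional Alexander-numbering condition on the braid using the quotient results already established. First I would pass from an abstract virtual knot to a single diagram carrying both structures at once. Since $K$ is $q$-periodic, by definition it is represented by some $q$-periodic diagram $D$, and since $K$ is almost classical this $D$ represents an almost classical knot. Applying Theorem~\ref{ACrep} to $D$ yields $\bar{D}=P_f^\infty(D)$, a diagram that is simultaneously $q$-periodic and almost classical and still represents $K$. This is precisely the input the equivariant braiding construction needs.

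Next I would feed $\bar{D}$ into the equivariant braiding process of Theorem~\ref{mperiod}(ii). Because $\bar{D}$ is $q$-periodic, that construction produces a virtual braid $\beta\in\VB_k$, with $k$ the linking number of $\bar{D}$, such that $\bar{D}=\widehat{\beta^q}$. The braiding process modifies the diagram only by detour moves and planar isotopies, so it preserves the Gauss code; hence $\widehat{\beta^q}$ has the same Gauss diagram as $\bar{D}$. Since almost classicality is detected by the index-zero condition on the chords of the Gauss diagram, $\widehat{\beta^q}$ is again almost classical.

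It remains to show that $\beta$ itself admits an Alexander numbering, and this is where the main obstacle lies: knowing that $\widehat{\beta^q}$ is almost classical only guarantees an Alexander numbering of the full $q$-fold diagram, not a numbering of a single fundamental domain whose values match at the top and bottom of the braid (which is what the definition of an almost classical braid requires). To overcome this I would use the quotient structure: $\widehat{\beta^q}$ is a $q$-periodic almost classical diagram whose quotient is $\widehat{\beta}=K_*$, so Corollary~\ref{ACquotient} shows that $K_*=\widehat{\beta}$ is itself almost classical.

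Finally, I would invoke the characterization established earlier that a braid is almost classical exactly when its closure is an almost classical diagram. Applied to $\beta$, this yields an Alexander numbering of the arcs of $\beta$ whose values agree along the top and bottom, exhibiting $\beta$ as an almost classical braid with $K=\widehat{\beta^q}$. The only substantive point is the descent of the numbering through the quotient, which is handled entirely by Corollary~\ref{ACquotient} together with the braid-closure characterization; everything else is bookkeeping inherited from Theorems~\ref{ACrep} and~\ref{mperiod}.
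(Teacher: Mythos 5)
Your proposal is correct and follows the same route the paper intends: the paper derives this corollary in one line as "a consequence of Theorems~\ref{ACrep} and~\ref{mperiod}," and your argument is exactly that chain, with the details filled in. In particular, you rightly identify the one point the paper leaves implicit --- that almost classicality of the full closure $\widehat{\beta^q}$ does not immediately give a top-bottom-matching numbering of a single copy of $\beta$ --- and you close it correctly via Corollary~\ref{ACquotient} applied to the quotient $K_*=\widehat{\beta}$ together with the stated equivalence between a braid being almost classical and its closure being an almost classical diagram.
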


\begin{example} \label{ex-pvb}
Consider the classical pretzel knot $9_{35}$, which admits a 3-periodic classical diagram. It is a consequence of the theorem of Edmonds \cite{Edmonds-1984} that $9_{35}$ does not admit a classical $q$-periodic diagram for any $q>3$. Since $9_{35}$ is a genus one knot, this follows from the general bound $q \leq 2g(K)+1$ on the possible periods of a classical knot diagram $K$, where $g(K)$ denotes the Seifert genus of $K$.

On the other hand, the knot $K=9_{35}$ has Alexander polynomial $\Delta_K(t)= 7t^2 - 13t +7,$ which satisfies Murasugi's conditions (Theorem~\ref{classicalmur}) for $q=3$ and $k=2,$ see Figure~\ref{pretzel-1}. However, it is impossible to realize $K$ as the closure of a 3-periodic classical braid $\beta^3$, since $\beta$ would necessarily be a braid on two strands, and for any braid in $\beta \in B_2$, the closure $\widehat{\beta^3}$ is necessarily a $(2,n)$ torus knot or link. (Any braid on two strands would only have ${\sigma_i}^{\pm1}$ terms, so it would reduce to either ${\sigma_1}^n$, which is the $(2,n)$ torus knot; or ${\sigma_1s}^{-n}$, which is the $(2,-n)$ torus knot).

However, Theorem~\ref{mperiod} tells us that $K$ can be realized as the closure of a 3-periodic virtual braid. On the left of Figure~\ref{pretzel-2} is a 3-periodic tangle diagram for $K$ as a classical knot that attempts to wind monotonically around the origin. It is not  a braid because monotonicity fails along the six dashed arcs in that figure.

\begin{figure}[h]
\begin{center}
\includegraphics[scale=0.38]{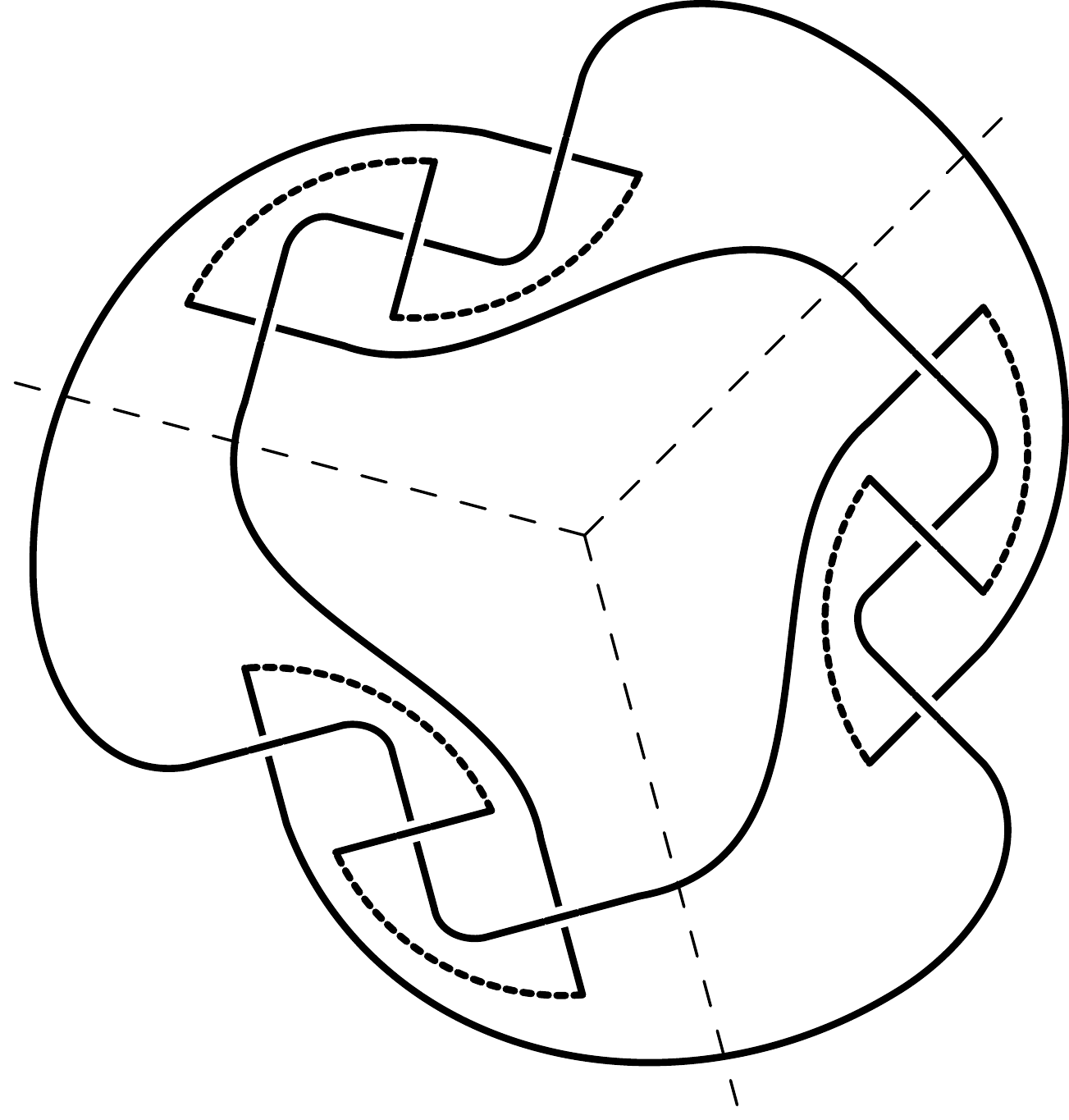}  
\qquad \qquad
\includegraphics[scale=0.38]{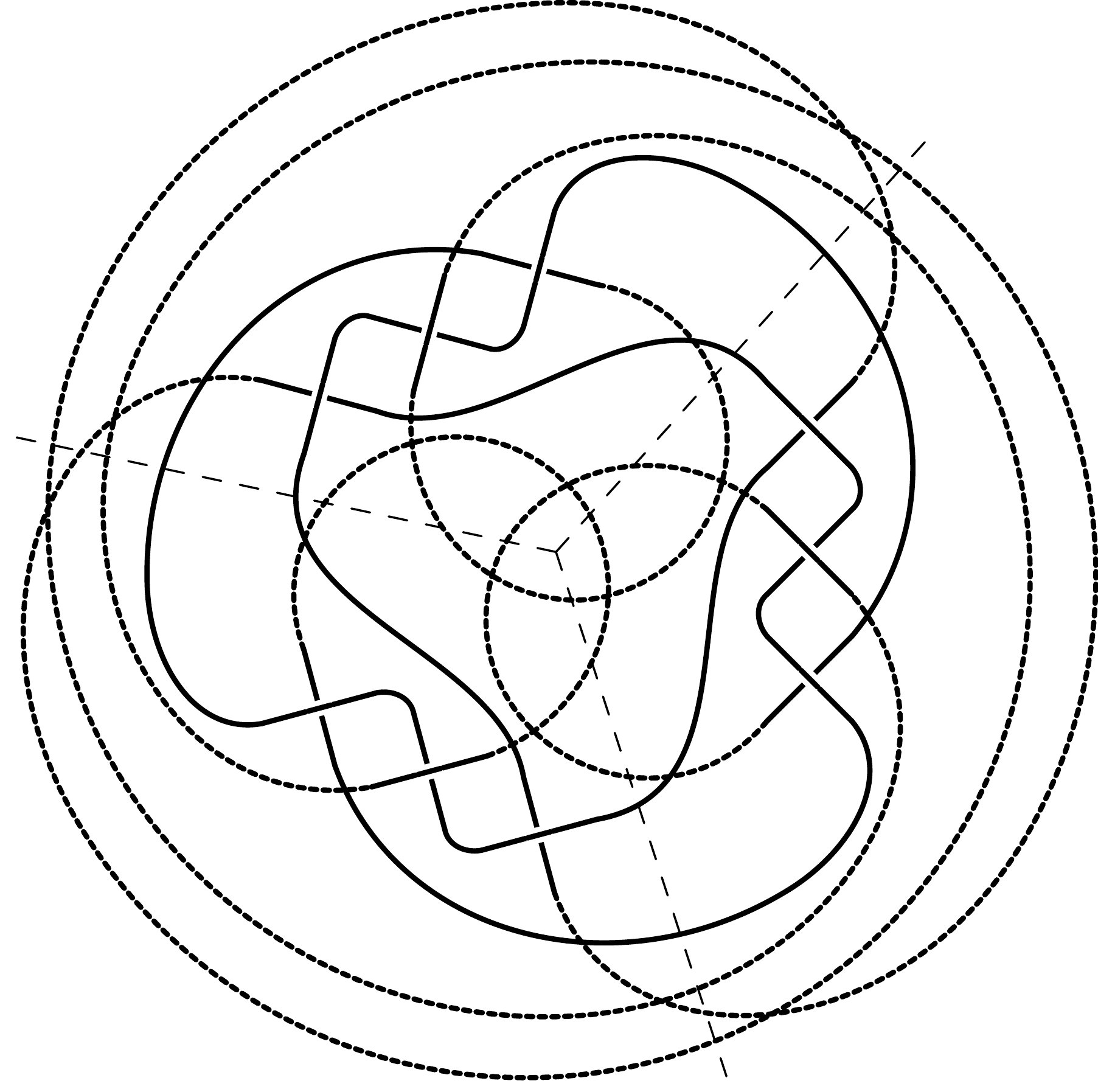}  
\end{center}

\caption{Converting a periodic tangle into a periodic virtual braid  for $9_{35}$.}
\label{pretzel-2}
\end{figure}

On the right of Figure~\ref{pretzel-2} is the result of replacing these six arcs with arcs that wind monotonically around the origin. This creates many new crossings, and all of them are virtual.  As in the proof of Theorem~\ref{mperiod}, these arcs are added so as to preserve the periodicity of the diagram, and the result is a 3-periodic virtual braid diagram for $9_{35}.$
One can check that the resulting braid $\beta \in \VB_8$ is given by the braid word 
$$\beta = \tau_2 \tau_5 \sigma_4 \tau_1 \tau_3 \sigma_4 \tau_5 \tau_7 \sigma_4 \tau_3 \tau_6.$$
\end{example}

Table \ref{tab:periodic-braids} lists the known periods of almost classical knots up to six crossing, along with periodic virtual braids closing up to the given knot.
The numbering of the virtual knots comes from Green's virtual knot table \cite{Green}.
\begin{example}
\label{two_bridge_example}
The almost classical knot 6.90227 coincides with the classical \hbox{2-bridge} knot $6_1 = K(9,7)$ (Schubert normal form).
Every 2-bridge knot is known to be 2-periodic, but this particular one is not fibered since its Alexander polynomial is $\Delta_K(t) \doteq 2t^2-5t+2$, which is not monic. Hence \cite[Corollary 3.4]{Lee-Park-1997} implies that $6_1$ cannot be written as the closure of a 2-periodic classical braid. 
Nevertheless, Table \ref{tab:periodic-braids} shows that it is the closure of the 2-periodic virtual braid
$\beta^2$ for $\beta = \tau_3\tau_2\tau_1 \sigma_2 \tau_2 \tau_3 \sigma_2^{-1} \sigma_4$.
\end{example}

In Theorem~\ref{thm:jaco}, we applied the construction of Definition~\ref{method-1} to determine the Jacobian of any periodic virtual knot $K$. We now show how to apply Definition~\ref{method-2} to give a formula for the Jacobian matrix for a periodic virtual knot $K$ that has been realized as the closure of a periodic virtual braid. As before, the matrix we obtain will be a circulant block matrix, and so completely determined by its first block row. The advantage of using Definition~\ref{method-2} here is that, as we shall see, the block matrices $A_i$ vanish for $i \geq 2$.

We label the arcs of $K$ using labels $x_i^j, y_i^j, z_i^j$ as before, with the $j=0,\ldots, q-1$ indicating the period. In particular, in the $j$-th period, the strands at the top of the braid are labelled $x_1^j,\ldots, x_{k}^j$, and the strands at the bottom are labelled $z_1^j, \ldots, z_{k}^j$. The internal arcs are labelled $y_1^j, \ldots, y_r^j$. We assume that there are $n$ crossings in each period, and so we obtain the relations $R_1^j,\ldots, R_n^j$ for the internal crossings in the $j$-th period, and the relations $S_1^j,\ldots, S_k^j$ corresponding to setting $z_i^j = x_i^{j+1}$. Notice that $n=k+r$ (since we have an $(n+k) \times (n+k)$ matrix).

The Jacobian $B$ is determined by the first block row, which is obtained by differentiating the relations $R_1^0,\ldots, R_n^0$ and $S_1^0,\ldots, S_k^0$ from the $0$-th period.   It follows that this determines the rest of the matrix since  the relations $R^j_i$ and $S^j_i$ are obtained from $R^0_i$ and $S^0_i$ by adding $j$ to the superscripts of all the occurrences of $x_i^0, y_i^0, z_i^0,$ and $x_i^1$ ($j+1$ is taken $\mod q$ here). Recall that the relations $R_1^0,\ldots, R_n^0$ are written in terms of $x_i^0, y_i^0, z_i^0,$ (for $i \in \{1, \cdots n\}$), and the relation $S_i^0$  is written in terms of $z_i^0$ and $x_i^1$.

\begin{figure}[h]
\begin{center}
\includegraphics[scale=0.8]{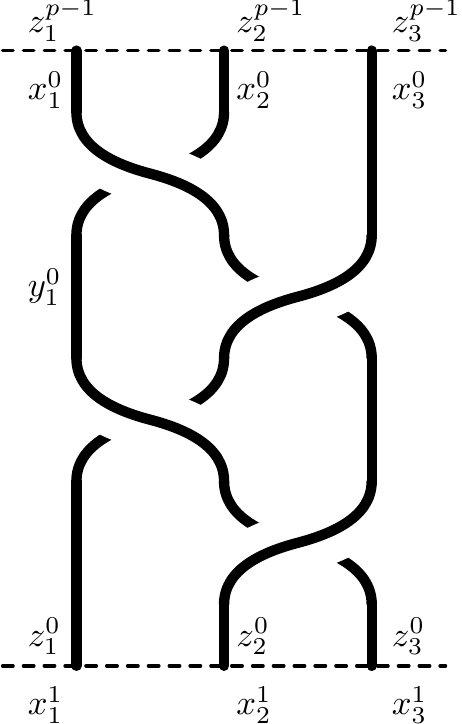}
\end{center}
\end{figure}

Consider for example the $q$-periodic braid with first period given as below (here we assume $q$ is relatively prime to 3). It has relations:
 
\begin{eqnarray*}
R_1^0 &=& x_1^0  \, x_2^0 \, (x_1^0)^{-1}\,(y_1^0)^{-1}, \\ 
R_2^0  &=& (x_3^0)^{-1}\, x_1^0 \, x_3^0\,(z_2^0)^{-1}, \\ 
R_3^0  &=& y_1^0 \, x_3^0\, (y_1^0)^{-1}\, (z_1^0)^{-1}, \\ 
R_4^0 &=& (z_2^0)^{-1}\, y_1^0 \, z_2^0\, (z_3^0)^{-1}, \\ 
S_1^0 &=& z_1^0\, (x_1^1)^{-1}, \\ 
S_2^0 &=& z_2^0\, (x_2^1)^{-1}, \\ 
S_3^0 &=& z_3^0\, (x_3^1)^{-1}. \\ 
\end{eqnarray*}

The first block row of the Jacobian $B$ therefore has the form: 
$$
\begin{blockarray}{c c c c c c c c c c c c c c c c c}
   & x_1^0 & x_2^0 & x_3^0 & y_1^0 & z_1^0 & z_2^0 & z_3^0 & x_1^1 & x_2^1 & x_3^1 & y_1^1 & z_1^1 & z_2^1 & z_3^1 & x_1^2 & \hdots \\
\begin{block}{c [c c c c c c c|c c c c c c c|c c]}
R_1^0 & 1-t & t & 0 & -1 & 0 & 0 & 0  & 0 & 0 & 0 & 0 & 0 & 0 & 0  & 0 & \hdots \\ 
R_2^0 & t^{-1} & 0 & 1-t^{-1} & 0 & 0 & -1 & 0  & 0 & 0 & 0 & 0 & 0 & 0 & 0  & 0 & \hdots \\ 
R_3^0 & 0 & 0 & t & 1-t & -1 & 0 & 0  & 0 & 0 & 0 & 0 & 0 & 0 & 0  & 0& \hdots \\ 
R_4^0 & 0 & 0 & 0 & t^{-1} & 0 & 1-t^{-1} & -1  & 0 & 0 & 0 & 0 & 0 & 0 & 0  & 0 & \hdots \\ 
\cline{2-17}
S_1^0 & 0 & 0 & 0 & 0 & 1 & 0 & 0  & -1 & 0 & 0 & 0 & 0 & 0 & 0  & 0 & \hdots \\ 
S_2^0 & 0 & 0 & 0 & 0 & 0 & 1 & 0  & 0 & -1 & 0 & 0 & 0 & 0 & 0  & 0 & \hdots \\ 
S_3^0 & 0 & 0 & 0 & 0 & 0 & 0 & 1  & 0 & 0 & -1 & 0 & 0 & 0 & 0  & 0 & \hdots \\
\end{block}
\end{blockarray}
$$

In general, we will have $A_2 = A_3 = \cdots = A_{q-1} = [0]$, and
\begin{equation} \label{A0andA1}
A_0 = \left[
\begin{array}{c|c}
* & * \\ \hline
0_{k \times n} & I_{k} \\
\end{array} \right]
 \quad \text{ and } \quad
 A_1 = \left[
\begin{array}{c|c}
0_{n \times k} & 0_{n \times n} \\ \hline
-I_{k} & 0_{k \times n} \\
\end{array} \right].
\end{equation}
The Jacobian matrix of the quotient knot $K_*$ is the matrix   
\begin{eqnarray*}
A = A_0 + A_1 
&=& 
 \left[
\begin{array}{c|c}
* & * \\ \hline
0_{k \times n} & I_{k} \\
\end{array} \right]
+
\left[
\begin{array}{c|c}
0_{k \times k} & 0_{k \times n} \\ \hline
-I_{k} & 0_{k \times n} \\
\end{array} \right] \\
&=&
 \left[
\begin{array}{c|c|c}
x_* & y_* & z_* \\ \hline
-I_{k} & 0_{k \times r} & I_{k} \\
\end{array} \right],
\end{eqnarray*}
where $x_*,y_*$ and $z_*$ represent the matrix entries in the columns corresponding to the differentiating the relations $R_1,\ldots, R_n$ with respect to the $x_i,y_i$ and  $z_i$ generators, respectively. 

\smallskip

\subsection{Periodicity, elementary ideals, and the virtual Alexander polynomial}
\label{Alexander_ideals}
In this subsection we discuss some consequences of  Theorem \ref{braid-rep} as applied to the Alexander module of a periodic virtual knot
(see Corollaries \ref{divisible-three}  and \ref{divisible-four})  and also to the ``virtual Alexander module'' of a periodic virtual knot
(see Propositions \ref{prop:KLS-2014} and \ref{vcrossing_bound}, and Corollary \ref{strong_vcrossing_bound}).

Let $A$ be an $m \times n$ matrix over a commutative ring with unit $R$.
For a non-negative integer $\ell$, the {\it $\ell$-th elementary ideal of $A$}, denoted $\cE_\ell(A)$, is:
the ideal generated by all $(n-\ell) \times (n-\ell)$ minors of $A$ if $0 < n - \ell \leq m$, 
the zero ideal  if $n - \ell > m$, and
$R$ if $n - \ell \leq 0$.
If $M$ is a finitely presented $R$-module and $A$ is a presentation matrix for $M$ then the ideal $\cE_\ell(A)$ is independent of the
choice of the presentation matrix $A$ and so the ideal $\cE_\ell(M)$ is well defined by $\cE_\ell(A)$.
Observe that $\cE_j(M) \subset \cE_{j+1}(M)$ for $j \geq 0$.

Recall that a {\it greatest common divisor domain}, abbreviated GCD domain, is an integral domain $R$ such that any two non-zero elements have a greatest common divisor.
Any unique factorization domain, for example the Laurent polynomial ring in any number of variables over the integers or a field,  is a GCD domain.
A finitely generated ideal $I= (a_1, \ldots, a_m)$ in a GCD domain is contained in a unique smallest principal ideal, namely
the ideal generated by $\operatorname{gcd}(a_1, \ldots, a_m)$.
We write $\operatorname{gcd}(I)$ for $\operatorname{gcd}(a_1, \ldots, a_m)$ and note that $\operatorname{gcd}(I)$ is well defined up to
multiplication by a unit in $R$.
The {\it $i$-th elementary divisor} of a  finitely presented module $M$ over $R$  is  $\Delta^i(M) = \operatorname{gcd}(\cE_i(M))$.
Observe that $\Delta^{i+1}(M)$ divides  $\Delta^i(M)$ for $i\geq 0$.

The divisibility properties of elementary divisors in the following two propositions and their corollaries are useful.

\begin{proposition}
\label{divisible-one}
Let $R$ be a commutative ring with unit.  Let $n$ be a positive integer and
let $M$ be an $R$-module $M$ that has  a presentation matrix of the form $A^n - I$ where
$A$ is a square matrix and $I$ is the identity matrix of the same size.
Let $N$ be the $R$-module with presentation matrix $A - I$.
Then for $\ell \geq 0$,  $\cE_\ell(M) \subset  \cE_\ell(N)$.
Consequently, if $R$ is a GCD domain then for $\ell \geq 0$,
$\Delta^\ell(N)$ divides $\Delta^\ell(M)$.
\end{proposition}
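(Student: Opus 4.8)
The plan is to reduce the statement to two elementary facts: a polynomial-style factorization of $A^n - I$ and the behavior of elementary ideals under matrix multiplication. First I would record the factorization
$$ A^n - I = (A-I)\,C, \qquad C = A^{n-1} + A^{n-2} + \cdots + A + I, $$
which holds because $A$ commutes with $I$ and with each of its own powers, so the usual identity $x^n - 1 = (x-1)(x^{n-1}+\cdots+1)$ transfers verbatim to the (commuting) matrices in question. Here $C$ is a square matrix of the same size as $A$, so that $A^n - I$ and $A - I$ are genuinely square presentation matrices for $M$ and $N$, respectively, as required by the hypothesis.

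The core of the argument is the claim that for any two $m \times m$ matrices $S$ and $T$ over $R$ and any $\ell \geq 0$ one has $\cE_\ell(ST) \subseteq \cE_\ell(S)$. I would prove this via the generalized Cauchy--Binet formula: writing $j = m - \ell$, any $j \times j$ minor of $ST$ with row index set $I$ and column index set $K$ satisfies
$$ \det\bigl((ST)_{I,K}\bigr) = \sum_{|J| = j} \det(S_{I,J})\,\det(T_{J,K}), $$
the sum running over all $j$-element subsets $J \subseteq \{1,\ldots,m\}$. Each summand is an $R$-multiple of the $j \times j$ minor $\det(S_{I,J})$ of $S$, so the left-hand side lies in the ideal generated by the $j \times j$ minors of $S$, which is exactly $\cE_\ell(S)$. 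Since the $j \times j$ minors of $ST$ generate $\cE_\ell(ST)$, this gives the containment. Applying it with $S = A - I$ and $T = C$ yields $\cE_\ell(M) = \cE_\ell(A^n - I) \subseteq \cE_\ell(A-I) = \cE_\ell(N)$, which is the first assertion.

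For the consequence, assume $R$ is a GCD domain. By definition $\Delta^\ell(N) = \operatorname{gcd}(\cE_\ell(N))$ divides every element of $\cE_\ell(N)$, and since $\cE_\ell(M) \subseteq \cE_\ell(N)$ it therefore divides every element of $\cE_\ell(M)$; hence it divides their greatest common divisor $\Delta^\ell(M)$. Equivalently, the smallest principal ideal containing $\cE_\ell(M)$ is contained in that containing $\cE_\ell(N)$, so $(\Delta^\ell(M)) \subseteq (\Delta^\ell(N))$ and thus $\Delta^\ell(N) \mid \Delta^\ell(M)$. The only point requiring genuine care is the Cauchy--Binet bookkeeping in the square case — matching the index convention $j = m - \ell$ against the definition of $\cE_\ell$ — together with the degenerate range $\ell \geq m$, where both ideals equal $R$ and the containment is trivial; note that for a square matrix the vanishing clause $n - \ell > m$ never occurs when $\ell \geq 0$, so no separate treatment of the zero ideal is needed.
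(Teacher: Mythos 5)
Your proof is correct and follows essentially the same route as the paper: the identical factorization $A^n - I = (A-I)\left(\sum_{i=0}^{n-1} A^i\right)$ followed by the containment of elementary ideals of a product in those of a factor, and the same gcd argument for the divisibility statement. The only difference is that where the paper cites Northcott's theorem that $\cE_\ell(ST) \subset \cE_\ell(S)\,\cE_\ell(T)$, you prove the (weaker, but sufficient) containment $\cE_\ell(ST) \subseteq \cE_\ell(S)$ directly via Cauchy--Binet, which makes the step self-contained.
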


\begin{proof}
Note that $A^n - I= (A -I) \left( \sum^{n-1}_{i=0} A^i\right)$.
By \cite[Theorem 1, Chapter 1]{Northcott} we have that
$\cE_\ell\left( A^n - I \right) \subset \cE_\ell\left( A-I \right) \cE_\ell\left(  \sum^{n-1}_{i=0} A^i\right)$.
Hence
\[
\cE_\ell(M)  =\cE_\ell(A^n - I)  \subset \cE_\ell\left( A-I \right) \cE_\ell\left(  \sum^{n-1}_{i=0} A^i\right)  \subset \cE_\ell\left( A-I \right) = \cE_\ell(N). \qedhere
\]
\end{proof}

For a virtual knot $K$, let $A_K$ denote its Alexander module (see section \ref{knot_and_alex}).
Let $\ell \geq 0$.
The $\ell$-th {\it Alexander ideal of $K$} is $\cE_\ell(A_K)$ and the 
{\it $\ell$-th Alexander polynomial of $K$} is $\Delta^\ell_K = \Delta^\ell(A_K)$.

\begin{corollary}
\label{divisible-three}
Let $n$ be a positive integer and
let $K$ be a $n$-periodic virtual knot diagram with quotient knot $K_*$.
Then for $\ell \geq 0$,  $\cE_\ell(A_K) \subset  \cE_\ell(A_{K_*})$.
Consequently,  for $\ell \geq 0$, 
$\Delta^\ell_{K_*}$  divides $\Delta^\ell_K$.
\end{corollary}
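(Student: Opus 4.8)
The plan is to reduce the statement to Proposition~\ref{divisible-one} by producing a single square matrix $\Phi$ over $R=\ZZ[t^{\pm 1}]$ for which the Alexander module $A_{K_*}$ is presented by $\Phi - I$ while $A_K$ is presented by $\Phi^q - I$, where I write $q$ for the period $n$ of the statement. First I would invoke Theorem~\ref{mperiod} to realize $K = \widehat{\beta^q}$ for a virtual braid $\beta \in \VB_k$, so that $K_* = \widehat{\beta}$, and then set up the Jacobian of $K$ using Definition~\ref{method-2} period by period, with top, internal, and bottom arcs $x^j, y^j, z^j$ and relations $R^j$ (from the crossings) together with the closure relations $S^j\colon z^j_i = x_i^{j+1}$ (indices mod $q$). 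This is exactly the block-circulant presentation recorded in \eqref{A0andA1}, in which only the blocks $A_0$ and $A_1$ are nonzero.

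The matrix $\Phi$ comes from the braid action on the arc labels. Each generator $\sigma_i^{\pm 1}, \tau_i$ of $\VB_k$ acts on the $k$ strand labels by an invertible elementary matrix over $R$ (a virtual Burau-type matrix), and I would take $\Phi \in GL_k(R)$ to be the product of these along $\beta$, so that the braid $\beta^q$ induces $\Phi^q$. The reduction then proceeds by two eliminations, which are Tietze transformations of the presentation and hence preserve the module (and therefore its elementary ideals, which are presentation-independent as noted in \S\ref{knot_and_alex}). Using the closure relations $S^j$, I substitute $z^j = x^{j+1}$ throughout; within each period the crossing relations $R^j$ then determine the internal arcs $y^j$ and express the output in terms of the input as $x^{j+1} = \Phi\, x^j$. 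Eliminating the $y^j$ and then $x^1,\ldots,x^{q-1}$ via $x^j = \Phi^j x^0$ collapses the presentation to the single wrap-around relation $x^0 = \Phi x^{q-1} = \Phi^q x^0$, that is, to the matrix $\Phi^q - I$. The same computation with a single period and closure $z^0 = x^0$ shows that $A_{K_*}$ is presented by $\Phi - I$.

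With both presentations in hand, Proposition~\ref{divisible-one} (applied with the matrix $\Phi$ and exponent $q$) gives $\cE_\ell(A_K) = \cE_\ell(\Phi^q - I) \subset \cE_\ell(\Phi - I) = \cE_\ell(A_{K_*})$ for all $\ell \geq 0$; since $R = \ZZ[t^{\pm 1}]$ is a GCD domain, passing to greatest common divisors yields $\Delta^\ell_{K_*}$ divides $\Delta^\ell_K$. The hard part will be the middle elimination: verifying that once the closure relations are discharged, the crossing relations genuinely solve for the internal arcs and produce the clean linear recurrence $x^{j+1} = \Phi\, x^j$ with $\Phi$ invertible. This is precisely where the invertibility of the elementary crossing matrices and the multiplicativity $\Phi_{\beta^q} = \Phi^q$ are needed; equivalently, it is the assertion that the block-circulant Jacobian of \eqref{A0andA1}, after row and column operations over $R$, is a presentation matrix equal to $\Phi^q - I$ up to unit blocks that do not affect any elementary ideal.
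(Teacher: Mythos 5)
Your proposal is correct and follows essentially the same route as the paper: realize $K=\widehat{\beta^{\,n}}$ via Theorem~\ref{mperiod}(ii), present $A_K$ by $\bar\Psi(\beta)^n - I$ and $A_{K_*}$ by $\bar\Psi(\beta)-I$ (the content of Remark~\ref{special_presentation}), and apply Proposition~\ref{divisible-one}. The only difference is that you sketch the elimination argument behind the presentation-matrix fact by hand, whereas the paper discharges it by citing the virtual Burau representation of \cite{BDGGHN}.
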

 
\begin{proof}
By Theorem \ref{mperiod}(ii),
$K$ can be realized as the closure of a periodic virtual braid,
$K=\widehat{\beta^{n}}$.
Hence the Alexander module of $K$ has a presentation matrix of the form
$A^{n} - I$, where $A$ is a square matrix and $I$ is the identity matrix of the same size;
furthermore, $A  - I$ is a presentation matrix for the Alexander module of $K_*$,  see Remark \ref{special_presentation}.
The conclusion of the Corollary follows from Proposition \ref{divisible-one}.
\end{proof}

\begin{proposition}
\label{divisible-two}
Let $R$ be a commutative ring with unit.  Assume that $R$ has prime characteristic $p>0$.
Let $M$ be an $R$-module $M$ that has  a presentation matrix of the form $A^{p^r} - I$ where
$A$ is a square matrix, $I$ is the identity matrix of the same size and $r \geq 1$.
Let $N$ be the $R$-module with presentation matrix $A - I$.
Then for $\ell \geq 0$,  $\cE_\ell(M) \subset  \cE_\ell(N)^{\, p^r}$.
Consequently, if $R$ is a GCD domain then for $\ell \geq 0$,
$\Delta^\ell(N)^{\, p^r}$ divides $\Delta^\ell(M)$.
\end{proposition}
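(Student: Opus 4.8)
The plan is to mirror the proof of Proposition~\ref{divisible-one}, with one crucial new ingredient supplied by the hypothesis that $R$ has characteristic $p$. The essential observation is that in characteristic $p$ the ``freshman's dream'' applies to the pair of commuting matrices $A$ and $I$: since the binomial coefficients $\binom{p}{i}$ vanish in $R$ for $0 < i < p$, the binomial theorem (valid because $A$ commutes with $I$) gives $(A - I)^p = A^p + (-I)^p = A^p - I$, the last equality holding for every prime $p$ (when $p=2$ we have $-I = I$ in $R$). Iterating $r$ times yields the key factorization
\begin{equation*}
A^{p^r} - I = (A - I)^{p^r}.
\end{equation*}
Thus $M$ has presentation matrix $C^{p^r}$, where $C := A - I$ is precisely a presentation matrix for $N$.

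With this factorization in hand, the argument proceeds exactly as in Proposition~\ref{divisible-one}. First I would write $C^{p^r}$ as a product of $p^r$ copies of the square matrix $C$ and apply \cite[Theorem 1, Chapter 1]{Northcott} repeatedly:
\begin{equation*}
\cE_\ell(C^{p^r}) \subset \cE_\ell(C)\,\cE_\ell(C^{p^r - 1}) \subset \cdots \subset \cE_\ell(C)^{\, p^r}.
\end{equation*}
Since $\cE_\ell(M) = \cE_\ell(C^{p^r})$ and $\cE_\ell(N) = \cE_\ell(C)$, this establishes the first claim $\cE_\ell(M) \subset \cE_\ell(N)^{\, p^r}$.

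For the divisibility statement over a GCD domain, I would argue directly rather than trying to identify $\gcd$ of a power ideal. Write $d = \Delta^\ell(N) = \operatorname{gcd}(\cE_\ell(N))$. Then $d$ divides every generator of $\cE_\ell(N)$, so $d^{\, p^r}$ divides every product of $p^r$ generators, and hence divides every element of $\cE_\ell(N)^{\, p^r}$, since these are $R$-linear combinations of such products. By the containment just proved, $d^{\, p^r}$ therefore divides every element of $\cE_\ell(M)$, and in particular divides $\operatorname{gcd}(\cE_\ell(M)) = \Delta^\ell(M)$. This gives $\Delta^\ell(N)^{\, p^r} \mid \Delta^\ell(M)$, as required.

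The only genuinely new step---and the one that makes the exponent jump from $1$ in Proposition~\ref{divisible-one} (via $A^n - I = (A-I)\sum_{i=0}^{n-1} A^i$) to the full $p^r$---is the characteristic-$p$ factorization $A^{p^r} - I = (A-I)^{p^r}$; once this is available, the rest is a routine repetition of the Northcott estimate and an elementary divisibility argument. The one point that warrants a moment of care is verifying $(-I)^p = -I$ uniformly, that is, handling $p=2$ separately, but this is immediate since $2 = 0$ in $R$ forces $-I = I$.
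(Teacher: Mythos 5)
Your proof is correct and follows essentially the same route as the paper: the key identity $(A-I)^{p^r}=A^{p^r}-I$ in characteristic $p$ (which the paper gets in one step from $p\mid\binom{p^r}{i}$ for $0<i<p^r$, and you get by iterating the prime case $r$ times), followed by Northcott's bound $\cE_\ell\bigl((A-I)^{p^r}\bigr)\subset\cE_\ell(A-I)^{p^r}$. Your explicit divisibility argument for the GCD-domain conclusion is a correct spelling-out of a step the paper leaves implicit.
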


\begin{proof}
Note that $(A-I)^{p^r} = \sum^{p^r}_{i=0} \, (-1)^i \binom{p^r}{i} A^{p^r -i} = A^{p^r} - I$
since $p$ divides the binomial coefficient $\binom{p^r}{i}$ for $0 < i < p^r$.
By \cite[Theorem 1, Chapter 1]{Northcott} we have that
$\cE_\ell\left( (A-I)^{p^r}\right) \subset \left( \cE_\ell\left( A-I \right) \right)^{p^r}$.
Hence
\[
\cE_\ell(M)  =\cE_\ell(A^{p^r} - I) = \cE_\ell\left( (A-I)^{p^r}\right)  \subset \left( \cE_\ell\left( A-I \right) \right)^{p^r} =\cE_\ell(N)^{\, p^r}. \qedhere
\]
\end{proof}

Note that $\F_p \otimes A_K$ is a module over 
$\F_p[t^{\pm1}]$ where $\F_p$ is the field of integers modulo a prime $p$.

\begin{corollary}
\label{divisible-four}
Let $p$ be a prime and
let $K$ be a $p^r$-periodic virtual knot diagram with quotient knot $K_*$.
Then for $\ell \geq 0$,  $\cE_\ell(\F_p \otimes A_K) \subset  \cE_\ell(\F_p \otimes A_{K_*})^{\, p^r}$.
Consequently,  for $\ell \geq 0$, 
$
\left( \Delta^\ell_{K_*}\right)^{p^r} \text{ \rm mod } p  \text{ divides  } \Delta^\ell_K \text{ \rm mod } p.
$
\end{corollary}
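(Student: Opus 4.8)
The plan is to mirror the proof of Corollary~\ref{divisible-three}, replacing the appeal to Proposition~\ref{divisible-one} by its characteristic-$p$ analogue, Proposition~\ref{divisible-two}, and carrying everything out over the coefficient ring $R = \F_p[t^{\pm 1}]$. First I would record that $R$ has prime characteristic $p$ and, being a Laurent polynomial ring over a field, is a GCD domain (in fact a UFD); thus both standing hypotheses of Proposition~\ref{divisible-two} on the ambient ring are met.

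Next I would produce the two presentation matrices over $R$. By Theorem~\ref{mperiod}(ii), since $K$ is $p^r$-periodic it can be written as $K = \widehat{\beta^{p^r}}$ for some $\beta \in \VB_k$, and exactly as in Corollary~\ref{divisible-three} (via Remark~\ref{special_presentation}) the Alexander module $A_K$ over $\ZZ[t^{\pm 1}]$ admits a presentation matrix of the form $A^{p^r} - I$, while $A - I$ presents $A_{K_*}$, with $A$ a square matrix and $I$ the identity of the same size. Because the tensor product is right exact and reduction modulo $p$ is a ring homomorphism commuting with matrix powers, applying $\F_p \otimes_{\ZZ[t^{\pm 1}]} (-)$ simply reduces these presentation matrices mod $p$: writing $\bar{A}$ for the reduction of $A$, the module $\F_p \otimes A_K$ is presented over $R$ by $\bar{A}^{p^r} - I$ and $\F_p \otimes A_{K_*}$ is presented by $\bar{A} - I$.

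With these identifications in place I would apply Proposition~\ref{divisible-two} directly, taking $M = \F_p \otimes A_K$, $N = \F_p \otimes A_{K_*}$, and the square matrix $\bar{A}$. The proposition yields $\cE_\ell(\F_p \otimes A_K) \subset \cE_\ell(\F_p \otimes A_{K_*})^{\,p^r}$ for all $\ell \geq 0$, which is the first assertion. Passing to greatest common divisors in the GCD domain $R$, and using that $\gcd$ of a product ideal $I^{p^r}$ equals $\gcd(I)^{p^r}$, the containment gives that $\left(\Delta^\ell(\F_p \otimes A_{K_*})\right)^{p^r}$ divides $\Delta^\ell(\F_p \otimes A_K)$; interpreting $\Delta^\ell_K \bmod p$ as $\Delta^\ell(\F_p \otimes A_K)$ (and likewise for $K_*$) then delivers the stated divisibility.

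The argument is essentially bookkeeping once Proposition~\ref{divisible-two} is available, so there is no deep obstacle. The one point requiring care is the reduction step, namely confirming that $\F_p \otimes_{\ZZ[t^{\pm 1}]}(-)$ carries the presentation matrix $A^{p^r} - I$ to $\bar{A}^{p^r} - I$ over $\F_p[t^{\pm 1}]$, which is right exactness of $\otimes$ together with the compatibility of reduction with matrix multiplication. A related subtlety worth flagging is that $\Delta^\ell_K \bmod p$ must be read as the elementary divisor of the reduced module $\F_p \otimes A_K$ rather than as the naive reduction of the integral elementary divisor $\Delta^\ell_K$: these can genuinely differ, since the $\gcd$ of the reduced generators need not equal the reduction of the integral $\gcd$, and it is precisely the former quantity that the ideal containment controls.
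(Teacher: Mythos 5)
Your proposal is correct and follows essentially the same route as the paper: realize $K=\widehat{\beta^{p^r}}$ via Theorem~\ref{mperiod}(ii), obtain the presentation matrices $A^{p^r}-I$ and $A-I$ from Remark~\ref{special_presentation}, and invoke Proposition~\ref{divisible-two}. The only difference is that you make explicit two points the paper leaves implicit --- that tensoring with $\F_p$ carries these presentation matrices to their reductions over $\F_p[t^{\pm 1}]$, and that $\Delta^\ell_K \bmod p$ should be read as the elementary divisor of the reduced module --- both of which are worthwhile clarifications rather than deviations.
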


\begin{proof}
By Theorem \ref{mperiod}(ii),
$K$ can be realized as the closure of a periodic virtual braid,
$K=\widehat{\beta^{p^r}}$.
Hence the Alexander module of $K$ has a presentation matrix of the form
$A^{p^r} - I$ where $A$ is a square matrix and $I$ is the identity matrix of the same size;
furthermore, $A  - I$ is a presentation matrix for the Alexander module of $K_*$, see Remark \ref{special_presentation}.
The conclusion of the Corollary follows from Proposition \ref{divisible-two}.
\end{proof}

\medskip

The {\it virtual Alexander polynomial} of a virtual knot $K$ (\cite[Definition 3.1]{BDGGHN}), denoted $H_K(s,t,q)$,
is the $0$-th elementary divisor of the 
{\it virtual Alexander module of $K$} associated to the {\it virtual knot group}
$\operatorname{{\it VG}}_K$
of $K$ as in  \cite[\S 3]{BDGGHN}.
(Note that the knot group  $G_K$  (Definition \ref{knot_group}) is a
quotient of $\operatorname{{\it VG}}_K$, and the precise relationship between $G_K$  and $\operatorname{{\it VG}}_K$  as well as various other groups
associated to $K$ is explained in \cite{AC}.)
The integral Laurent polynomial $H_K(s,t,q)$ in the variables $s,t,q$ is related to Sawollek's {\it generalized Alexander polynomial} of $K$ (\cite{Sawollek}), denoted  $G_K(s,t)$,
via the formula $H_K(s, t, q) = G_K(sq^{-1}, tq)$  (up to multiplication by $\pm s^a t^b q^c$) as shown in \cite[Corollary 4.8]{BDGGHN}. 
By \cite[Theorem 3.4]{BDGGHN}, we have  
\begin{equation}
\label{virtual_crossing_reln}
q\mhyphen\fatness(H_K(s,t,q) )  \leq  2v(K),
\end{equation}
where $q\mhyphen\fatness(H_K(s,t,q) )$ is the span, also known as the {\it width}, of $H_K(s,t,q)$ as a Laurent polynomial in the variable $q$.
Moreover, by \cite[Proposition 4.10]{BDGGHN}, if $H_K(s,t,q)$ is nontrivial, then
$q\mhyphen\fatness(H_K(s,t,q) )\geq 2.$
Note that if $K$ is almost classical then $H_K(s, t, q) = 0$ by \cite[Corollary 5.4]{AC} and so does not yield information on such knots.

The {\it virtual Burau representation} (\cite[Definition 4.2]{BDGGHN}) is a homomorphism  $\Psi \colon  \VB_k \rightarrow GL_k(\ZZ[s^{\pm 1}, t^{\pm 1}, q^{\pm 1}])$.
If  $K$  is the closure of the virtual braid $\beta \in \VB_k$ then, 
by \cite[Theorem 4.4]{BDGGHN}, the matrix $\Psi(\beta) - I_k$, where $I_k$ is the $k \times k$ identity matrix,
is  a  presentation matrix for the virtual Alexander module of $K$.
Hence $H_K(s,t,q) = \det\left( \Psi(\beta) - I_k \right)$.

\begin{remark}
\label{special_presentation}
Let ${\bar \Psi} \colon  \VB_k \rightarrow GL_k(\ZZ[t^{\pm 1}])$ be the homomorphism obtained from $\Psi$
by evaluation at $s=1$ and $q=1$.
If the virtual knot $K$  is the closure of the virtual braid $\beta \in \VB_k$ then ${\bar \Psi}(\beta) - I_k$
is  a  presentation matrix for its Alexander module $A_K$. 
\end{remark}

A normalization of $H_K(s,t,q)$, denoted ${\widehat H}_K(s,t,q)$, for a virtual knot or link $K$ was defined in \cite[Definition 5.4]{BDGGHN} as follows.
Let $\beta \in \VB_k$ be a virtual braid whose closure is $K$. 
Then   ${\widehat H}_K(s,t,q) = (-1)^{\operatorname{writhe}(\beta) + v(\beta)}\det\left( \Psi(\beta) - I_k \right)$
where $v(\beta)$ is the virtual crossing number of the closure of $\beta$.
The invariant  ${\widehat H}_K(s,t,q)$  is  defined up to powers of $st$ and in particular,  the lowest and highest exponents of $q$ occurring
in ${\widehat H}_K(s,t,q)$ are well defined.
This can be used to give a stronger version of \eqref{virtual_crossing_reln},  see  \cite[Theorem 5.6]{BDGGHN}.

Our  approach yields a small enhancement of \cite[Theorem 3.1]{Kim-Lee-Seo-2014}, as follows.

\begin{proposition} \label{prop:KLS-2014}
Let $p$ be a prime and 
let $K$ be a $p^r$-periodic virtual knot diagram with quotient $K_*$. Then
${\widehat H}_K(s,t,q)  = \left[{\widehat H}_{K_*}(s,t,q)\right]^{p^r} \text{ mod $p$}$,  up to multiplication by a power of $st$.
\end{proposition}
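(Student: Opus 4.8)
The plan is to reduce everything to the braid picture and a single Frobenius-type identity, and then to track the normalization sign. By Theorem~\ref{mperiod}(ii) I may write $K=\widehat{\beta^{p^r}}$ and $K_*=\widehat{\beta}$ for a single virtual braid $\beta\in\VB_k$. Set $W=\Psi(\beta)\in GL_k(\ZZ[s^{\pm1},t^{\pm1},q^{\pm1}])$. Since $\Psi$ is a homomorphism, $\Psi(\beta^{p^r})=W^{p^r}$, so by \cite[Theorem 4.4]{BDGGHN} the unnormalized virtual Alexander polynomials are $H_K=\det(W^{p^r}-I_k)$ and $H_{K_*}=\det(W-I_k)$.

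The algebraic heart of the argument is the identity already exploited in Proposition~\ref{divisible-two}. Working over $\F_p[s^{\pm1},t^{\pm1},q^{\pm1}]$, which has characteristic $p$, every binomial coefficient $\binom{p^r}{i}$ with $0<i<p^r$ vanishes, and since $W$ commutes with $I_k$ the binomial expansion gives $(W-I_k)^{p^r}=W^{p^r}-I_k$. Taking determinants and using $\det(X^{p^r})=(\det X)^{p^r}$ over a commutative ring yields $\det(W^{p^r}-I_k)\equiv\bigl(\det(W-I_k)\bigr)^{p^r}\pmod p$, that is, $H_K\equiv H_{K_*}^{\,p^r}\pmod p$ up to a unit. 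This already recovers the congruence underlying \cite[Theorem 3.1]{Kim-Lee-Seo-2014}.

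The remaining and more delicate point is to upgrade this from $H$ to the normalized invariant ${\widehat H}$ and to verify the sign. By the normalization of \cite[Definition 5.4]{BDGGHN}, ${\widehat H}_K=(-1)^{\operatorname{writhe}(\beta^{p^r})+v(\beta^{p^r})}\det(W^{p^r}-I_k)$ and ${\widehat H}_{K_*}=(-1)^{\operatorname{writhe}(\beta)+v(\beta)}\det(W-I_k)$. Here I would use that the diagram $\widehat{\beta^{p^r}}$ consists of $p^r$ concatenated copies of $\beta$ closed up, so both its writhe and its number of virtual crossings are $p^r$ times the corresponding quantities for $\widehat{\beta}$; hence $\operatorname{writhe}(\beta^{p^r})=p^r\operatorname{writhe}(\beta)$ and $v(\beta^{p^r})=p^r\,v(\beta)$. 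Consequently the normalizing sign for $K$ equals $(-1)^{p^r(\operatorname{writhe}(\beta)+v(\beta))}$, which is exactly the sign one obtains by raising the normalizing sign for $K_*$ to the $p^r$-th power. Combining this with the determinant congruence gives ${\widehat H}_K\equiv{\widehat H}_{K_*}^{\,p^r}\pmod p$. The indeterminacy by a power of $st$ is harmless: ${\widehat H}$ is itself defined only up to such a factor, and a power of $st$ raised to the $p^r$ is again a power of $st$.

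The step I expect to be the main obstacle is the normalization bookkeeping, namely confirming that the sign exponents $\operatorname{writhe}$ and $v$ scale by the factor $p^r$ for $\widehat{\beta^{p^r}}$ relative to $\widehat{\beta}$, and that this is compatible with the well-definedness of ${\widehat H}$ up to powers of $st$. For $p=2$ the sign is automatic since $-1=1$ in $\F_2$, so the genuine work is confined to odd $p$, where the parity of the virtual-crossing count must be matched correctly; the purely determinantal congruence from the Frobenius identity is the easy part.
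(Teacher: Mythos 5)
Your proposal is correct and follows essentially the same route as the paper's proof: realize $K$ as the closure of the periodic braid $\beta^{p^r}$ via Theorem~\ref{mperiod}(ii), apply the characteristic-$p$ identity $(\Psi(\beta)-I_k)^{p^r}=\Psi(\beta)^{p^r}-I_k$ from Proposition~\ref{divisible-two}, take determinants, and absorb the normalization via $(-1)^{\mu(\beta^{p^r})}=(-1)^{p^r\mu(\beta)}$ where $\mu=\operatorname{writhe}+v$. The only cosmetic difference is that you spell out the scaling of the writhe and virtual crossing count under concatenation, which the paper asserts in one line.
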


\begin{proof}
By Theorem \ref{mperiod}(ii), $K$ can be realized as the closure of a periodic virtual braid, $K=\widehat{\beta^{p^r}}$.
Note that $K_*=\widehat{\beta}$. 
For a virtual braid $\eta$,  let $\mu(\eta)  = \operatorname{writhe}(\eta) + v(\eta)$.
Note that $(-1)^{\mu\left(\beta^{p^r}\right)}  = (-1)^{p^r \mu(\beta)}$.
Also,
\[
{\widehat H}_K(s,t,q) =  (-1)^{\mu\left(\beta^{p^r}\right)} \det\left( \Psi\left(\beta^{p^r}\right) - I  \right) =  (-1)^{\mu\left(\beta^{p^r}\right)} \det\left( \Psi(\beta)^{\, p^r} - I\right).
\]
As in the proof of Proposition \ref{divisible-two},   $\left( \Psi(\beta)- I \right)^{p^r} =   \Psi(\beta)^{\, p^r} - I \text{ mod $p$}$ and so
\begin{align*}
		{\widehat H}_K(s,t,q) &= (-1)^{\mu\left(\beta^{p^r}\right)}  \det \left( \left( \Psi(\beta)- I \right)^{p^r} \right) \text{ mod $p$} \\
                                  &= (-1)^{p^r \mu(\beta)} \det  \left( \Psi(\beta)- I  \right)^{\, p^r} \text{ mod $p$}  \\
                                  &= \left[{\widehat H}_{K_*}(s,t,q)\right]^{p^r} \text{ mod $p$.} \qedhere
\end{align*}
\end{proof}

We use Proposition \ref{prop:KLS-2014} to give a bound on the prime power periods of $K$ in terms of its virtual crossing number $v(K)$. 
This applies provided $H_K(s,t,q)$ is  non-zero mod $p$ in which case
$H_{K_*}(s,t,q)$ is also non-zero mod $p$.
By \cite[Proposition 4.10]{BDGGHN} we have that $(1-tq) (q-s)$ divides $H_{K_*}(s,t,q)$.
This holds over $\F_p$ as well, and since $H_{K_*}(s,t,q)$ is non-zero modulo $p$, we conclude that $\left((1-tq) (q-s)\right)^{p^r}$ divides $H_K(s,t,q)$.  Hence
\begin{equation}
\label{virtual_crossing_inequality}
q\mhyphen\fatness(H_K(s,t,q) )  \geq  q\mhyphen\fatness_p(H_K(s,t,q) )   \geq  {p^r}  \left(q\mhyphen\fatness_p(H_{K_*}(s,t,q) ) \right) \geq 2{p^r}.
\end{equation}
Here,  $q\mhyphen\fatness_p(H_K(s,t,q))$ is the span in the variable $q$ of $H_K(s,t,q)$ reduced modulo $p$, 
see section \ref{sec:periods} for a discussion of $\fatness$ and $\fatness_p$.

Combining \eqref{virtual_crossing_inequality} with \eqref{virtual_crossing_reln} yields the following result.

\begin{proposition}
\label{vcrossing_bound}
If $K$ is a ${p^r}$-periodic virtual knot such that $H_K(s,t,q)$ is  non-zero modulo $p$ 
then $p^r \leq v(K)$, where $v(K)$ is the virtual crossing number of $K$.\qed
\end{proposition}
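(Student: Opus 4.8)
The plan is to sandwich the $q$-width of $H_K(s,t,q)$ between $2p^r$ and $2v(K)$ and then cancel the factor of $2$. The upper bound is handed to us by \eqref{virtual_crossing_reln}, namely $q\mhyphen\fatness(H_K(s,t,q)) \leq 2v(K)$, so all the work lies in producing the lower bound $q\mhyphen\fatness(H_K(s,t,q)) \geq 2p^r$, and this is where periodicity enters.

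First I would use Theorem~\ref{mperiod}(ii) to realize $K$ as the closure $\widehat{\beta^{p^r}}$ of a periodic virtual braid $\beta \in \VB_k$, so that $K_* = \widehat{\beta}$. Proposition~\ref{prop:KLS-2014} then supplies the congruence ${\widehat H}_K = [{\widehat H}_{K_*}]^{p^r} \bmod p$ (up to a power of $st$, which affects no $q$-exponent); since ${\widehat H}$ and $H$ differ only by an overall sign, the same congruence holds for $H_K$ and $H_{K_*}$. In particular, the hypothesis that $H_K$ is nonzero modulo $p$ forces $H_{K_*}$ to be nonzero modulo $p$ as well. To extract a numerical lower bound I would then isolate a controllable factor of $H_{K_*}$: by \cite[Proposition~4.10]{BDGGHN} the polynomial $(1-tq)(q-s)$, which has $q$-width $2$, divides $H_{K_*}$, and because its coefficients are $\pm1$ this divisibility survives reduction modulo $p$. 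Feeding this into the congruence shows that $\big((1-tq)(q-s)\big)^{p^r}$ divides the mod-$p$ reduction of $H_K$, so $q\mhyphen\fatness_p(H_K(s,t,q)) \geq 2p^r$. Chaining $2p^r \leq q\mhyphen\fatness_p(H_K) \leq q\mhyphen\fatness(H_K) \leq 2v(K)$, exactly as recorded in \eqref{virtual_crossing_inequality}, and dividing by $2$ gives $p^r \leq v(K)$.

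The subtle point, and the one I would be most careful about, is the interplay between the integral $q$-width and its mod-$p$ counterpart: reducing modulo $p$ can only shrink a width, since top or bottom coefficients in $q$ may vanish. Consequently the lower bound cannot be argued directly over $\ZZ$ and must be produced over $\F_p$, where the divisibility by $\big((1-tq)(q-s)\big)^{p^r}$ is genuinely available. Once one observes that $(1-tq)(q-s)$ remains nonzero mod $p$ --- so that its $p^r$-th power really does have $q$-width $2p^r$ --- and that the $q$-width of a nonzero multiple is at least that of the divisor, the lower bound follows and the proposition drops out by comparison with the crossing-number inequality.
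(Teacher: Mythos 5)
Your argument is correct and is essentially the paper's own proof: the paper likewise uses Proposition~\ref{prop:KLS-2014} to pass the non-vanishing mod $p$ down to $H_{K_*}$, invokes \cite[Proposition~4.10]{BDGGHN} to get the factor $(1-tq)(q-s)$ of $q$-width $2$, raises it to the $p^r$ via the congruence to obtain $q\mhyphen\fatness_p(H_K) \geq 2p^r$, and compares with \eqref{virtual_crossing_reln}. Your added caution about working with $\fatness_p$ rather than $\fatness$ for the lower bound is exactly the point encoded in the first inequality of \eqref{virtual_crossing_inequality}.
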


We obtain the following upper bound for the periods of $K$.

\begin{corollary}
\label{strong_vcrossing_bound}
Assume $K$ is a virtual knot with $H_K(s,t,q)$ non-zero modulo $p$ for all primes $p$.
Then  $e^{v(K)^{1.3841}}$ is an upper bounded for a period of $K$.
\end{corollary}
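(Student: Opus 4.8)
The plan is to bound an arbitrary period $q$ of $K$ by showing that $q$ divides $\operatorname{lcm}(1,2,\ldots,v(K))$, and then to estimate this least common multiple. The only input specific to $K$ is Proposition~\ref{vcrossing_bound}; everything else is elementary number theory.

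First I would reduce to prime powers. If $K$ admits a $q$-periodic diagram, then the underlying rotation by $2\pi/q$ has, for every divisor $d\mid q$, a power equal to the rotation by $2\pi/d$, so the same diagram is $d$-periodic for every $d\mid q$ with $d>1$. In particular, writing the prime factorization $q=\prod_i p_i^{r_i}$, the knot $K$ is $p_i^{r_i}$-periodic for each $i$. Since by hypothesis $H_K(s,t,q)$ is non-zero modulo $p_i$ for every prime $p_i$, Proposition~\ref{vcrossing_bound} applies and gives $p_i^{r_i}\le v(K)$ for each $i$. As the $p_i^{r_i}$ are pairwise coprime positive integers, each at most $v(K)$, their product $q$ divides $\operatorname{lcm}(1,2,\ldots,v(K))$; in particular $q\le \operatorname{lcm}(1,2,\ldots,v(K))$.

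It then remains to bound this least common multiple. Here I would pass to logarithms, so that $\log q\le \log\operatorname{lcm}(1,\ldots,v(K))=\psi(v(K))$ with $\psi$ the Chebyshev function, and invoke an explicit estimate. Two convenient routes are available: the crude factorwise bound $q\le v(K)^{\pi(v(K))}$ together with a Rosser--Schoenfeld type inequality $\pi(x)<c\,x/\log x$, or directly an explicit Chebyshev bound such as $\operatorname{lcm}(1,\ldots,m)<3^m$. Either way one obtains a linear estimate $\log q\le C\,v(K)$ for an explicit constant $C$, and the claim $q\le e^{v(K)^{1.3841}}$ follows once $C\,v(K)\le v(K)^{1.3841}$.

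The hard part will be this last inequality, which must hold for every admissible integer value of $v(K)$. Since $C\,v\le v^{1.3841}$ is equivalent to $C\le v^{0.3841}$ and the right side increases with $v$, the inequality is tightest at the smallest relevant value of $v(K)$; the exponent $1.3841$ is then precisely the constant calibrated to make this hold across all $v(K)\ge 2$ for the explicit $C$ coming from the chosen prime-counting estimate (the cases $v(K)\le 1$ are vacuous, since then $K$ admits no nontrivial period). I would therefore fix one explicit estimate, read off $C$, and verify the elementary inequality $C\le v^{0.3841}$ at the finitely many small values of $v(K)$ where it is binding, monotonicity handling all larger values. Sharper exponents (approaching $1$) are in principle obtainable by using $\psi(v(K))\le v(K)^{\alpha}$ directly via the prime number theorem, but $1.3841$ is what the self-contained explicit bound yields. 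The entire topological content of the argument is carried by Proposition~\ref{vcrossing_bound}, with the remaining work being the selection of the number-theoretic estimate and the small-case verification.
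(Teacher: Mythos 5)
Your proof is correct, and the topological core --- reducing to prime-power periods and invoking Proposition~\ref{vcrossing_bound} to get $p_i^{r_i}\le v(K)$ for each prime power in the factorization --- is exactly the paper's (the paper leaves the reduction to divisors implicit; you spell it out, which is a small gain in rigor). Where you diverge is the number theory. The paper combines $q\le v(K)^{\omega(q)}$ with Robin's explicit bound $\omega(q)\le 1.3841\,\ln q/\ln\ln q$ for $q\ge 3$, which gives $\ln\ln q\le 1.3841\ln v(K)$ and hence the stated bound with no case analysis; in particular the constant $1.3841$ is not calibrated against small values of $v(K)$ as you conjecture --- it is simply Robin's constant for $\omega$, and the exponent falls out of the algebra. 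Your route through $q\mid\operatorname{lcm}(1,\ldots,v(K))$ and a Chebyshev-type estimate $\psi(m)\le Cm$ (Rosser--Schoenfeld gives $C=1.03883$, Hanson's $\operatorname{lcm}(1,\ldots,m)<3^m$ gives $C=\ln 3$) is equally valid, with the $v(K)\le 1$ cases vacuous as you note, and it actually proves the stronger bound $q\le e^{C\,v(K)}$; the stated form then follows from the elementary inequality $C\le v^{0.3841}$ for $v\ge 2$. So your argument buys a sharper asymptotic bound at the cost of quoting an explicit Chebyshev constant and checking one small case, while the paper's buys the exact stated exponent directly from Robin's theorem.
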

\begin{proof}
For an integer $n$, 
let $\omega(n)$ be the number distinct primes dividing $n$.
Assume $n$ is a period of $K$ and write
$n = \prod^{\omega(n)}_{j=1}  p_j^{r_j}$ where the $p_j$'s are distinct primes.
By Proposition \ref{vcrossing_bound},  $p^{r_j} \leq v(K)$ for each $j$.
Hence $n \leq v(K)^{\omega(n)}$ and so $\ln(n) \leq {\omega(n)} \ln(v(K))$.
Robin showed that $\omega(n) \leq 1.3841 \ln(n)/ \ln(\ln(n))$ for $n \geq 3$,  \cite[Theorem 11]{robin}.
Hence $\ln(\ln(n)) \leq 1.3841 \ln(v(K))$ from which the conclusion follows.
\end{proof}

For example, the hypothesis of Corollary \ref{strong_vcrossing_bound} is satisfied whenever $\pm 1$ appears a coefficient of $H_K(s,t,q).$ 
However, this hypothesis obviously fails when $H_K(s,t,q)$ vanishes as in the case when $K$ is an almost classical knot. 
In addition, there are virtual knots with $H_K(s,t,q)\neq 0$ where the hypothesis of Corollary \ref{strong_vcrossing_bound} fail, for instance the knot $K= 4.43$ (from the table of virtual knots in\cite{Green}) has $$H_K(s,t,q) = 2 s^3 t^3 - 2 s t + 2 q s t^2 + 2 s^2 t q^{-1} - 2 q s^2 t^3 - 2 s^3 t^2 q^{-1},$$ which reduces to zero modulo 2.


\section{Circulant matrices in positive characteristic}
\label{sec:circulant}

In this section we study circulant matrices since they arise naturally in our approach to the computation of the Alexander invariants of periodic knots.
While circulant matrices over field of characteristic $0$ have been extensively examined in the literature,
we are mainly interested in circulant matrices over a field of characteristic $p > 0$.
Our two key results are Theorems~\ref{xinvpx} and~\ref{xinbx_upper}, which may be also of independent interest to algebraists.

Let $R$ be a commutative ring with unit and let $M$ be a left $R$-module.
Since $R$ is assumed to be commutative, $M$ is also a $R$-$R$-bimodule where the right $R$-action is given
by $x r = rx$ for $r \in R$ and $x \in M$.
For a positive integer $n$, 
let $\operatorname{Mat}(n,R)$ denote the $R$-algebra of $n \times n$ matrices over $R$ and $\operatorname{Mat}(n,M)$
the $R$-$R$-bimodule of $n \times n$ matrices over $M$.
The $R$-$R$-bimodule structure on $M$ induces a
$\operatorname{Mat}(n,R)$-$\operatorname{Mat}(n,R)$-bimodule structure on $\operatorname{Mat}(n,M)$ via
left and right matrix multiplication.

We will be primarily interested in the case $R = \F_p$, the field of integers modulo a prime $p$,
and $M = \operatorname{Mat}(\ell,S)$  over an $\F_p$-algebra $S$.
In that case, $n \times n$ circulant matrices over $M$ (see Definition \ref{def:circulant} below) are also called \emph{block circulant matrices},
as the elements of $M$ are themselves matrices.
Forgetting the block structure yields an $\ell n \times \ell n$ matrix over $S$.

\begin{definition}
\label{def:circulant}
Let $R$ be a commutative ring with unit and let $M$ be an $R$-module.
For elements $A_0, \ldots , A_{n-1} \in M$ the corresponding {\it circulant matrix} is the $n \times n$ matrix over $M$ given by
\[
C(A_0, \ldots, A_{n-1})=\left[
\begin{array}{ccccc}
A_0 & A_1  & A_2 & \cdots & A_{n-1} \\
A_{n-1}  & A_0 & A_1 & \cdots & A_{n-2} \\
A_{n-2}  & A_{n-1} & A_0 & \cdots & A_{n-3} \\ 
\vdots & \vdots & \vdots & \ddots & \vdots \\ 
A_1 & A_2 & A_3 & \cdots & A_0
\end{array}\right].
\]
\end{definition}

Let $P$ be the $n \times n$ matrix over $R$ given by
\begin{equation*}
P = C(0, 1, 0, \ldots, 0)=
\left[
\begin{array}{cccc}
 0 &1 & \cdots& 0 \\
  \vdots&  \ddots& \ddots& \vdots\\
   0& \cdots  & & 1\\
 1 & 0 & \cdots  & 0 
\end{array} 
 \right],
\end{equation*}
where $1 \in R$ is the unit element for $R$.

Observe that for $0 \leq j \leq n-1$, we have $P^j = C(0,\ldots, 0, 1, 0, \ldots, 0),$ where $1$ appears in the $(j+1)$-st slot.
Hence the circulant matrix $C(A_0, \ldots, A_{n-1})$ over $M$ can be written as 
\begin{equation}
\label{circformula}
C(A_0, \ldots, A_{n-1}) = \sum^{n-1}_{i=0}  A_i P^i.
\end{equation}
(Note that $P^0 = I$, the $n \times n$ identity matrix.)

\medskip

\noindent
{\it Conventions for binomial coefficients.}   Let $a, b$ be non-negative integers.  We define
\[
\binom{a}{b} = \begin{cases}
\frac{a!}{b! \, (a -b)!} & \text{if $a \geq b$,} \\
 0  & \text{if $a < b$}.
\end{cases}
\]
It is also convenient to define
\[
\binom{-1}{b} =(-1)^b.
\]

The following identities involving binomial coefficients modulo a prime $p$ will be useful.

\smallskip

\begin{lemma}
\label{binomlemma}
Let $p$ be a prime and $r$ a positive integer.   
\begin{enumerate}
\item  If $0 \leq \ell \leq p^r-1$, then
\[
\binom{p^r -1}{\ell} = (-1)^\ell \mod p.
\]

\item If $1 \leq \ell \leq p^r -1$, then
\[
\binom{p^r + \ell -1}{p^r-1} = 0 \mod p.
\]
\end{enumerate}
\end{lemma}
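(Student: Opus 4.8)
The plan is to deduce both congruences from the Frobenius identity $(1\pm x)^{p^r}\equiv 1\pm x^{p^r}\pmod p$ in $\F_p[x]$, together with a comparison of coefficients. Throughout I would work over the polynomial ring or the formal power series ring with coefficients in $\F_p$, so that these reductions become literal equalities rather than term-by-term congruences, and the coefficient extraction is unambiguous.

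For part (1), I would establish the polynomial identity $(1+x)^{p^r-1}=\sum_{k=0}^{p^r-1}(-1)^k x^k$ in $\F_p[x]$ and then read off the coefficient of $x^\ell$. The quickest route is to multiply the right-hand side by $1+x$: the product telescopes to $1+x^{p^r}$ (here one uses $(-1)^{p^r}=-1$ in $\F_p$, which holds for odd $p$ since $p^r$ is odd, and holds trivially for $p=2$ since $-1=1$), and $1+x^{p^r}\equiv(1+x)^{p^r}\pmod p$ by Frobenius. Since $\F_p[x]$ is an integral domain and $1+x$ is not a zero divisor, I may cancel the factor $1+x$ to obtain the claimed identity, whence $\binom{p^r-1}{\ell}\equiv(-1)^\ell\pmod p$ for $0\le\ell\le p^r-1$.

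For part (2), I would first rewrite $\binom{p^r+\ell-1}{p^r-1}=\binom{p^r+\ell-1}{\ell}$ and recognize this as the coefficient of $x^\ell$ in the negative binomial series $(1-x)^{-p^r}=\sum_{k\ge 0}\binom{p^r+k-1}{k}x^k$, viewed in $\ZZ[[x]]$. Reducing modulo $p$ and again invoking Frobenius gives $(1-x)^{-p^r}=\big((1-x)^{p^r}\big)^{-1}\equiv(1-x^{p^r})^{-1}=\sum_{m\ge 0}x^{mp^r}$ in $\F_p[[x]]$, a series supported only on exponents divisible by $p^r$. Comparing the coefficient of $x^\ell$ then shows $\binom{p^r+\ell-1}{\ell}\equiv 0\pmod p$ whenever $p^r\nmid\ell$, and in particular for every $\ell$ with $1\le\ell\le p^r-1$.

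I expect no serious obstacle, as both statements are classical; the only points requiring care are bookkeeping ones. For (2) I must ensure the manipulations take place in $\F_p[[x]]$, where $1-x^{p^r}$ is invertible because its constant term is a unit, so that passing to inverses is legitimate; for (1) I should justify the cancellation of $1+x$ by appealing to the integral-domain structure of $\F_p[x]$. A self-contained alternative, which I would mention, is to invoke Lucas' theorem directly: part (1) reduces digit by digit to the standard fact $\binom{p-1}{j}\equiv(-1)^j\pmod p$, and part (2) follows from the base-$p$ expansion $p^r+\ell-1=1\cdot p^r+(\ell-1)$ compared against $\ell$, where the low-order digits force a vanishing factor $\binom{\ell-1}{\ell}=0$.
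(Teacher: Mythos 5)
Your proof is correct, but it follows a genuinely different route from the paper's. For part (1) the paper uses the Pascal identity $\binom{p^r-1}{\ell}+\binom{p^r-1}{\ell-1}=\binom{p^r}{\ell}$ together with the divisibility of $\binom{p^r}{\ell}$ by $p$ to get the recursion $\binom{p^r-1}{\ell}\equiv-\binom{p^r-1}{\ell-1}\pmod p$, and then inducts on $\ell$; for part (2) it applies Lucas's theorem directly, comparing the base-$p$ digits of $p^r+\ell-1$ against those of $p^r-1=\sum_{i=0}^{r-1}(p-1)p^i$ and locating a digit strictly less than $p-1$, which kills the product. You instead package everything into Frobenius: the identity $(1+x)^{p^r-1}=\sum_{k}(-1)^kx^k$ in $\F_p[x]$ for (1), and the negative binomial expansion $(1-x)^{-p^r}=(1-x^{p^r})^{-1}$ in $\F_p[[x]]$ for (2). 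Both arguments are sound, and your bookkeeping (cancellation of $1+x$ in the integral domain $\F_p[x]$, invertibility of $1-x^{p^r}$ in $\F_p[[x]]$) is exactly the care required. Your approach buys a cleaner and strictly stronger version of (2) — the coefficient vanishes whenever $p^r\nmid\ell$, not just for $1\le\ell\le p^r-1$ — while the paper's argument is more elementary in part (1) and, in part (2), leans on Lucas, which the paper cites anyway. One small caution on your closing Lucas-based alternative for (2): the phrase ``vanishing factor $\binom{\ell-1}{\ell}=0$'' is not itself a Lucas factor, since Lucas operates digit by digit; what you actually need is that $\ell-1<\ell$ forces some single base-$p$ digit of $\ell-1$ to be strictly smaller than the corresponding digit of $\ell$ (look at the highest digit where they differ), which does make one genuine Lucas factor vanish. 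As written that sentence is loose, but it is only an aside and the main argument does not depend on it.
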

\begin{proof}
Proof of (1).
The standard binomial coefficient identity $\binom{p^r-1}{\ell} + \binom{p^r-1}{\ell-1} = \binom{p^r}{\ell}$
gives $\binom{p^r-1}{\ell} = - \binom{p^r-1}{\ell-1} \mod p$, valid for $0 < \ell \leq p^r-1$,
because
$\binom{p^r}{\ell}$ is divisible by $p$ for $0 < \ell < p^r-1$
and $\binom{p^r-1}{(p^r-1)-1}  = p^r - 1 = -1 \mod p$.
The formula $\binom{p^r -1}{\ell} = (-1)^\ell \mod p$ now easily follows by induction on $\ell$.

Proof of (2).
We apply Lucas's Theorem
\cite[Theorem 1]{Fine}
which asserts that if $M$ an $N$ are non-negative integers
written in base $p$ as
$M = \sum^k_{i=0} M_i p^i$ and $N = \sum^k_{i=0} N_i p^i$,
with $0 \leq M_i,  N_i < p$, then
$
\binom{M}{N} = \prod^k_{i=0} \binom{M_i}{N_i} \mod p
$.
Since $1 \leq \ell \leq p^r -1$, 
we have $p^r + \ell -1 = \sum^r_{i=0} M_i p^i$
where $M_r =1$ and at least one of the numbers $M_i$ for $0 \leq i \leq  r-1$ is strictly less than $p-1$.
Also, $p^r -1 = \sum^{r-1}_{i=0} (p-1) p^i$.
Hence at least one the numbers $\binom{M_i}{p-1}$ is $0$ and so by Lucas's Theorem,
$\binom{p^r + \ell -1}{p^r-1} = 0 \mod p$.
\end{proof}

\smallskip

We will show that the matrix $P = C(0,1,0,\ldots,0)$  over $\F_p$ of size $p^r \times p^r$ is
conjugate, via an explicitly given matrix over $\F_p$,  to an elementary Jordan matrix.
Before proving that, we establish a useful lemma.

\begin{lemma}
\label{lem-xinvpx}
Let $X$ be the matrix  of size $p^r \times p^r$ over $\F_p$ with
\[
X_{i,j} = {\binom{i-2}{j-1}}, \ 1 \leq i,j \leq p^r.
\]
Then $X$ is invertible and 
 \[
(X^{-1})_{i,j} = { \binom{p^r-j+1}{p^r-i}}, \ 1 \leq i,j \leq p^r.
\]
\end{lemma}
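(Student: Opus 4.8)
The plan is to verify directly that the stated matrix is a right inverse of $X$; since both are square matrices over the field $\F_p$, producing a right inverse will simultaneously establish invertibility and identify $X^{-1}$. Write $Y$ for the candidate inverse, so that $Y_{j,k} = \binom{p^r-k+1}{p^r-j}$, and compute the $(i,k)$ entry of the product $XY$.

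First I would expand
\[
(XY)_{i,k} = \sum_{j=1}^{p^r} \binom{i-2}{j-1}\binom{p^r-k+1}{p^r-j}
\]
and reindex via $a = j-1$ to get $\sum_{a=0}^{p^r-1}\binom{i-2}{a}\binom{p^r-k+1}{p^r-1-a}$. This is a Chu--Vandermonde convolution with upper summation limit $p^r-1$. I would invoke the Vandermonde identity $\sum_{a=0}^{s}\binom{m}{a}\binom{n}{s-a} = \binom{m+n}{s}$ as a polynomial identity in the top variable $m$, so that it remains valid at $m = i-2 = -1$ (the case $i=1$), where the stated convention $\binom{-1}{a} = (-1)^a$ is precisely the value of the polynomial $\binom{m}{a}$ at $m=-1$. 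Carrying this out over $\ZZ$ and then reducing modulo $p$ yields
\[
(XY)_{i,k} = \binom{p^r+i-k-1}{p^r-1} \bmod p.
\]

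The crux is then to show this binomial coefficient equals the Kronecker delta $\delta_{i,k}$ in $\F_p$, which splits into three cases according to the sign of $i-k$. When $i=k$ it is $\binom{p^r-1}{p^r-1}=1$. When $i>k$, set $\ell = i-k$, so that $1 \le \ell \le p^r-1$ and the coefficient is $\binom{p^r+\ell-1}{p^r-1} = 0 \bmod p$ by Lemma~\ref{binomlemma}(2). When $i<k$, the top entry satisfies $0 \le p^r+i-k-1 < p^r-1$, so it is strictly smaller than the bottom entry $p^r-1$ and the coefficient vanishes by the convention $\binom{a}{b}=0$ for $a<b$. Since $1\le i,k\le p^r$ these three cases exhaust every entry, so $XY=I$, and over a field this gives that $X$ is invertible with $X^{-1}=Y$.

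The main obstacle is the bookkeeping required to apply Vandermonde at the nonstandard top value $-1$ and to confirm that the three cases partition the index range correctly; the one genuinely nontrivial input is Lemma~\ref{binomlemma}(2), which supplies the off-diagonal vanishing when $i>k$, whereas the $i<k$ vanishing and the diagonal value fall out immediately from the binomial conventions.
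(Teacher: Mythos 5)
Your proposal is correct and follows essentially the same route as the paper: both verify $XY=I$ by reindexing the entrywise sum into a Chu--Vandermonde convolution (extended to top entry $-1$ via the convention $\binom{-1}{a}=(-1)^a$), obtaining $(XY)_{i,k}=\binom{p^r+i-k-1}{p^r-1}$, and then disposing of the three cases exactly as you do, with Lemma~\ref{binomlemma}(2) supplying the vanishing for $i>k$. No gaps; nothing further to add.
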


\begin{proof}
Let $Y$ be the matrix over $\F_p$ given  by
$
Y_{i,j} = { \binom{p^r-j+1}{p^r-i}}, \ 1 \leq i,j \leq p^r.
$
We will show that $XY =I$, from which the lemma will follow.  

\[
(XY)_{i,j} = \sum_{k=1}^{p^r} X_{i,k}Y_{k,j} = \sum_{k=1}^{p^r} { \binom{i-2}{k-1} } { \binom{p^r-j+1}{p^r-k} } = \sum_{\ell=0}^{p^r-1} { \binom{i-2}{\ell} } { \binom{p^r-j+1}{p^r-1 - \ell} }.
\]

The well-known {\it Vandermonde Convolution formula} asserts that for non-negative integers $m, n, q$,
\[
\sum^q_{\ell=0} \binom{m}{\ell}  \binom{n}{q-\ell}  =   \binom{m+n}{q}.
\]
This formula is also valid for $m = -1$ with our convention  $\binom{-1}{\ell} = (-1)^\ell$.
Applying Vandermonde Convolution to the above expression for $(XY)_{i,j}$ (with $m=i-2,$ $q=p^r-1,$ and $n=p^r-j+1$) yields
\[
(XY)_{i,j} = \binom{p^r+ i-j-1}{p^r-1}.
\]
We need that $XY = I$, so we need to show that $(XY)_{i,i} = 1$ and $(XY)_{i,j} = 0$ for $i \neq j$.
If $i < j$ then $p^r + i -j -1 < p^r -1$ and so $(XY)_{i,j} = \binom{p^r+ i-j-1}{p^r-1} =0$ in this case.
We have $(XY)_{i,i} = \binom{p^r-1}{p^r-1} =1$.
If $i > j$ then Lemma~\ref{binomlemma}(2) applies (as $1 \le i-j \le p^r$), and we have $(XY)_{i,j} = \binom{p^r+ i-j-1}{p^r-1} =0 \mod p$ in this case.
Hence $XY = I$.
\end{proof}

\begin{theorem}
\label{xinvpx}
Let $P = C(0,1,0,\ldots,0)$ be the circulant matrix  of size $p^r \times p^r$ over $\F_p$ and let $X$ be the matrix  of size $p^r \times p^r$ over $\F_p$ with
\[
X_{i,j} = {\binom{i-2}{j-1}}, \ 1 \leq i,j \leq p^r
\]
and
 \[
(X^{-1})_{i,j} = { \binom{p^r-j+1}{p^r-i}}, \ 1 \leq i,j \leq p^r.
\] Then
$$
X^{-1}PX = \left[
\begin{array}{c c c c}
1 & 1 &  \cdots & 0 \\ 
0 & \ddots & \ddots  &  \vdots \\ 
\vdots & \ddots & \ddots & 1 \\ 
0 & \cdots &   0 & 1
\end{array}\right] = J.
$$
\end{theorem}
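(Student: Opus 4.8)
The plan is to avoid inverting $X$ altogether by proving the equivalent identity $PX = XJ$ and then left-multiplying by $X^{-1}$, whose existence is guaranteed by Lemma~\ref{lem-xinvpx}. Write $n = p^r$. The matrix $P = C(0,1,0,\ldots,0)$ is the cyclic shift, so $P_{i,k} = 1$ exactly when $k = i+1$ (for $1 \le i \le n-1$), or when $i = n$ and $k = 1$, and is $0$ otherwise. Likewise $J = I + N$, where $N$ carries $1$'s on the superdiagonal, so $J_{k,j} = 1$ precisely when $k = j$ or $k = j-1$. Thus the proof reduces to a pair of binomial identities: one governing the generic rows of $PX$, and one governing the single row where the shift wraps around.

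First I would compute the left-hand side from the shift structure of $P$. For $1 \le i \le n-1$ this gives $(PX)_{i,j} = X_{i+1,j} = \binom{i-1}{j-1}$, while for the last row $(PX)_{n,j} = X_{1,j} = \binom{-1}{j-1} = (-1)^{j-1}$, where the final equality uses the convention $\binom{-1}{b} = (-1)^b$. Next I would compute the right-hand side. Since $J_{k,j}$ is nonzero only for $k \in \{j, j-1\}$, one has $(XJ)_{i,j} = X_{i,j} + X_{i,j-1} = \binom{i-2}{j-1} + \binom{i-2}{j-2}$, and Pascal's rule collapses this to $\binom{i-1}{j-1}$; the boundary column $j=1$ is immediate, since there $J_{k,1}$ is nonzero only for $k=1$ and both $(PX)_{i,1}$ and $(XJ)_{i,1}$ equal $1$. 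Comparing the two computations, the rows $1 \le i \le n-1$ of $PX$ and $XJ$ already agree, with no appeal to the characteristic.

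The only remaining equality is in the wrap-around row $i = n$, where I must verify $(-1)^{j-1} = \binom{p^r-1}{j-1} \bmod p$ for $1 \le j \le p^r$; this is exactly the content of Lemma~\ref{binomlemma}(1). This row is the main obstacle, and it is the unique place where the characteristic of $\F_p$ is used: it is precisely Lemma~\ref{binomlemma}(1) that renders the cyclic identification $X_{1,j} = (-1)^{j-1}$ compatible with the Jordan entry $\binom{p^r-1}{j-1}$, whereas everything else is the characteristic-free Pascal recursion encoded in the definition $X_{i,j} = \binom{i-2}{j-1}$. Having established $PX = XJ$ over $\F_p$, left-multiplication by $X^{-1}$ (from Lemma~\ref{lem-xinvpx}) yields $X^{-1}PX = J$, completing the proof.
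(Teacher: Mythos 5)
Your proposal is correct and follows essentially the same route as the paper's own proof: both reduce the statement to the identity $PX = XJ$, verify the generic rows via Pascal's rule on $\binom{i-2}{j-1}$, and handle the wrap-around row $i = p^r$ by invoking Lemma~\ref{binomlemma}(1) to match $(-1)^{j-1}$ with $\binom{p^r-1}{j-1} \bmod p$, with invertibility of $X$ supplied by Lemma~\ref{lem-xinvpx}. Your observation that the characteristic enters only in the wrap-around row is a nice framing, but the argument is the same.
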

\begin{proof}
By Lemma~\ref{lem-xinvpx}, we know that $X$ and $X^{-1}$ are indeed inverses, and the theorem follows once we verify that $PX = XJ$.

Since $P_{k,k+1} =1$ for $k = 1, \ldots p^r-1$ and $P_{p^r,1} =1$ and $P_{i,j} =0$ otherwise, 
\[
(PX)_{i,j} = \sum^{p^r}_{k=1} P_{i,k} X_{k,j} = 
\begin{cases}
X_{i+1,j} = \binom{i-1}{j-1}  & \text{if  $i \neq p^r$,}\\
X_{1,j} = \binom{-1}{j-1}  = (-1)^{j-1} & \text{if  $i = p^r$.}
\end{cases}
\]
Note that this is because if $i=p^r$, $P_{p^r,1} =1$ and $P_{i,j} =0$ otherwise, so our sum becomes $1\cdot X_{1,j} = X_{1,j}$. If $i \neq p^r$, then $P_{i,i+1} =1$ and $P_{i,j} =0$ otherwise, so our sum becomes $X_{i+1,j}$.
Since $J_{k,k+1} =1$ for $k= 1, \ldots p^r-1$ and $J_{k,k} =1$ for $k = 1, \ldots p^r$ and $J_{i,j} =0$ otherwise,
\[
(XJ)_{i,j} = \sum^{p^r}_{k=1} X_{i,k} J_{k,j} = 
\begin{cases}
X_{i,j-1} + X_{i,j} = \binom{i-2}{j-2} + \binom{i-2}{j-1}  = \binom{i-1}{j-1}& \text{if  $j \neq 1$,}\\
X_{i,1} = \binom{i-2}{0} = 1 & \text{if  $j=1$.}
\end{cases}
\]

If $j=1$, $J_{k,1} = 1$ for $k=1$ and $J_{k,1} = 0$ otherwise, so we have the sum equalling $X_{i,1}J_{1,1} = X_{i,1}$. If $j \neq 1$, $J_{k,j} = 1$ for $k\in \{j-1,j\}$ and $J_{k,j} = 0$ otherwise, so we have the sum equalling $X_{i,j}+ X_{i,j-1}$.

It follows immediately that $(PX)_{i,j} = (XJ)_{i,j}$ for $i \neq p^r$, $j \neq 1$.   
For the $j=1$ case, $(PX)_{i,1} = \binom{i-1}{0} = 1$. For the $i=p^r$ case, we have
\[
(XJ)_{p^r, \,j} =  \binom{p^r -1}{j-1}  = (-1)^{j-1} \mod p.
\]
with the last equality coming from Lemma~\ref{binomlemma}(1).
Since $(PX)_{p^r, \,j} = (-1)^{j-1}$, we obtain $(PX)_{p^r, \,j} =(XJ)_{p^r, \,j}  \mod p$,
completing the proof that  $PX = XJ$.
\end{proof}

We remark that
numbers of the form ${\binom{p^r+m-1}{p^r -1}}$,  $1 \leq m \leq p^r -1$, while divisible by $p$, need not be divisible by higher powers of $p$.
For example, if $p=3$, $r=2$ and $m = 6$ then
\[
\binom{p^r+m-1}{p^r -1} = \binom{14}{8} = 3 \cdot 7 \cdot 11 \cdot 13
\]
which is divisible by 3 but not by $3^2$.
This observation obstructs a version of Theorem~\ref{xinvpx} where $\F_p$ would conceivably be replaced by the ring of integers modulo $p^r$.

\begin{theorem}
\label{xinbx_upper}
Let $M$ be vector space over  $\F_p$ and let $B=C(A_0, \ldots, A_{p^r-1})$ be a $p^r \times p^r$ circulant matrix over $M$.
Then $X^{-1} B X  = \sum^{p^r-1}_{i=0} A_i J^i$, where 
$X$ is the matrix  of size $p^r \times p^r$ over $\F_p$ with
\[
X_{i,j} = {\binom{i-2}{j-1}}, \ 1 \leq i,j \leq p^r,
\]
 \[
(X^{-1})_{i,j} = { \binom{p^r-j+1}{p^r-i}}, \ 1 \leq i,j \leq p^r,
\]
and
$$
J = X^{-1}PX = \left[
\begin{array}{c c c c}
1 & 1 &  \cdots & 0 \\ 
0 & \ddots & \ddots  &  \vdots \\ 
\vdots & \ddots & \ddots & 1 \\ 
0 & \cdots &   0 & 1
\end{array}\right].
$$
In particular, $X^{-1} B X$ is upper triangular as a matrix over $M$.
\end{theorem}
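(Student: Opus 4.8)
The plan is to reduce everything to the single conjugation identity $X^{-1}PX = J$ already established in Theorem~\ref{xinvpx}, by exploiting the representation of a circulant matrix as a polynomial in the shift matrix $P$. First I would invoke formula \eqref{circformula}, which expresses the circulant $B = C(A_0,\ldots,A_{p^r-1})$ over $M$ as
\[
B = \sum_{i=0}^{p^r-1} A_i P^i,
\]
where each $A_i$ is identified with the scalar matrix $A_i I \in \operatorname{Mat}(p^r, M)$ and $P^i \in \operatorname{Mat}(p^r,\F_p)$ acts by right multiplication through the $\operatorname{Mat}(p^r,\F_p)$-bimodule structure on $\operatorname{Mat}(p^r, M)$. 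Conjugating by the $\F_p$-matrix $X$ and using that conjugation is additive gives
\[
X^{-1} B X = \sum_{i=0}^{p^r-1} X^{-1} (A_i I) P^i X.
\]

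The key observation is that a scalar matrix $A_i I$, with entry $A_i \in M$, commutes with every matrix over $\F_p$. This is precisely where the bimodule hypothesis $xr = rx$ for $x \in M$ and $r \in \F_p$ enters: for an $\F_p$-matrix $Y$ one checks entrywise that $(A_i I)\,Y$ and $Y\,(A_i I)$ each have $(k,\ell)$-entry $A_i Y_{k,\ell} = Y_{k,\ell} A_i$. Hence $X^{-1}(A_i I) = (A_i I) X^{-1}$, and I can slide the scalar matrix to the front:
\[
X^{-1}(A_i I) P^i X = (A_i I)\, X^{-1} P^i X = (A_i I)\,(X^{-1} P X)^i = A_i J^i,
\]
the last equality by Theorem~\ref{xinvpx}. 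Summing over $i$ yields $X^{-1} B X = \sum_{i=0}^{p^r-1} A_i J^i$, as claimed.

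For the final assertion, I would write $J = I + N$, where $N$ is the nilpotent superdiagonal shift, so that $J$ is upper triangular; every power $J^i$ is then upper triangular, and therefore so is the $M$-valued matrix $\sum_{i} A_i J^i$. I do not anticipate a genuine obstacle here, since the hard computational content already lives in Theorem~\ref{xinvpx} and Lemma~\ref{lem-xinvpx}. The only point requiring care is the module-theoretic bookkeeping: one must keep track of which matrices have entries in $\F_p$ and which have entries in $M$, and verify that the scalar matrices $A_i I$ commute with $\F_p$-matrices so that they may be pulled through the conjugation. Once that is in place, the statement is a purely formal consequence of \eqref{circformula} together with the identity $X^{-1}PX = J$.
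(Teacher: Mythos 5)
Your proposal is correct and follows essentially the same route as the paper: expand $B=\sum_i A_iP^i$ via \eqref{circformula}, conjugate term by term, commute the coefficients $A_i$ past the $\F_p$-matrices using the bimodule structure, and invoke Theorem~\ref{xinvpx} to replace $X^{-1}P^iX$ by $J^i$. Your explicit justification of the step $X^{-1}A_iP^iX = A_i(X^{-1}PX)^i$ is a point the paper leaves implicit, but the argument is the same.
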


\begin{proof}
By \eqref{circformula},  $B  = \sum^{p^r-1}_{i=0} A_i P^i$ where $P = C(0,1,0,\ldots,0)$.
Hence
	\begin{align*}
		X^{-1} B X  &= X^{-1} \left( \sum^{p^r-1}_{i=0} A_i P^i  \right) X\\
		& =  \sum^{p^r-1}_{i=0}  X^{-1} A_i P^i  X   =  \sum^{p^r-1}_{i=0} A_i ( X^{-1} P  X)^i  \\
                 &  = \sum^{p^r-1}_{i=0} A_i J^i \qquad \text{by Theorem~\ref{xinvpx}.}
	\end{align*}
Since each $A_i J^i$ is upper triangular, so is $ \sum^{p^r-1}_{i=0} A_i J^i$.
\end{proof}

\begin{corollary}\label{xinbx}
Let $B$ and $X$ be as Theorem~\ref{xinbx_upper}.
Then
\[
(X^{-1}BX)_{j, \, j + \ell} =  \sum_{i=\ell}^{p^r-1} \binom{i}{\ell} A_i.
\]
\end{corollary}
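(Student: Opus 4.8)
The plan is to combine the explicit formula $X^{-1}BX = \sum_{i=0}^{p^r-1} A_i J^i$ supplied by Theorem~\ref{xinbx_upper} with a direct computation of the entries of the powers $J^i$ of the Jordan block $J$. Reading off the $(j,j+\ell)$ entry of both sides, the left-hand side becomes $(X^{-1}BX)_{j,j+\ell}$ and the right-hand side becomes $\sum_{i=0}^{p^r-1} A_i\,(J^i)_{j,j+\ell}$, so the whole problem reduces to identifying the scalar $(J^i)_{j,j+\ell}$.

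First I would write $J = I + N$, where $N$ is the nilpotent matrix over $\F_p$ whose only nonzero entries are $N_{a,a+1} = 1$ for $1 \le a \le p^r-1$. A short induction shows that $N^\ell$ has $(N^\ell)_{a,a+\ell} = 1$ for $a+\ell \le p^r$ and all other entries zero; in particular $N^\ell = 0$ once $\ell \ge p^r$. Since $I$ and $N$ commute, the binomial theorem applies and gives
\[
J^i = (I+N)^i = \sum_{\ell=0}^{i} \binom{i}{\ell} N^\ell.
\]
Extracting the $(j,j+\ell)$ entry and using the description of $N^\ell$ then yields $(J^i)_{j,j+\ell} = \binom{i}{\ell}$, where the convention $\binom{i}{\ell}=0$ for $i<\ell$ (already in force in the paper) takes care of the cases where the superdiagonal in question lies beyond the support of $N^\ell$.

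Substituting this back into the entrywise expression produces $(X^{-1}BX)_{j,j+\ell} = \sum_{i=0}^{p^r-1}\binom{i}{\ell} A_i$, and since $\binom{i}{\ell}$ vanishes for $i<\ell$ the lower summation limit may be raised to $i=\ell$, which is exactly the claimed formula. I do not expect any genuine obstacle here: the argument is essentially the computation of powers of a single Jordan block. The only points requiring mild care are keeping the index ranges consistent so that $j+\ell \le p^r$ (ensuring the entry exists), and remembering that the integer coefficients $\binom{i}{\ell}$ are to be read modulo $p$ because $M$ is an $\F_p$-vector space; both are automatic and cause no difficulty.
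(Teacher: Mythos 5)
Your proposal is correct and follows essentially the same route as the paper: the paper likewise writes $J = I + Q$ with $Q$ the superdiagonal nilpotent matrix, expands $J^i = \sum_{\ell=0}^{i}\binom{i}{\ell}Q^\ell$ by the binomial theorem, and reads off the $(j,j+\ell)$ entry. The only cosmetic difference is that the paper first interchanges the order of summation to collect coefficients of each $Q^\ell$ before extracting entries, whereas you extract entries of $J^i$ directly; the computation is the same.
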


\begin{proof}
Let $Q$ be the $p^r \times p^r$ matrix over $\F_p$ given by
$$
Q = \left[
\begin{array}{c c c c}
0 & 1 &  \cdots & 0 \\ 
& \ddots & \ddots  &  \vdots \\ 
\vdots &  & \ddots & 1 \\ 
0 & \cdots &   & 0
\end{array}\right].
$$

Then $J = I +Q$ and so
$J^i = (I+Q)^i = \sum_{\ell=0}^{i}{\binom{i}{\ell}} Q^\ell$.  By Theorem~\ref{xinbx_upper},
\[ 
X^{-1} B X  = \sum^{p^r-1}_{i=0} A_i J^i
                  = \sum^{p^r-1}_{i=0} A_i \left( \sum_{\ell=0}^{i}{\binom{i}{\ell}} Q^\ell \right)
                  = \sum^{p^r-1}_{\ell=0} \left( \sum_{i=\ell}^{p^r-1}{\binom{i}{\ell}} A_i \right) Q^\ell 
\]
Then $(X^{-1} B X)_{j,j+\ell} = \sum^{p^r-1}_{\ell=0} \left( \sum_{i=\ell}^{p^r-1}{\binom{i}{\ell}} A_i \right) Q^\ell $. Note that $Q_{j,j+1} = 1$ and in general, ${Q^{\ell}}_{j,j+\ell} = 1$ and ${Q^k}_{j,j+\ell} = 0$, for $k \neq \ell$. So we then get $(X^{-1} B X)_{j,j+\ell} = \sum_{i=\ell}^{p^r-1} \binom{i}{\ell} A_i$.
\end{proof}
The next result is obtained by evaluating the formula in Corollary~\ref{xinbx} for $\ell = 0,1.$
\begin{corollary}\label{xinbx-2}
For $B$ and $X$ as Theorem~\ref{xinbx_upper}, we have
$$
X^{-1}BX =  \begin{bmatrix}
A & D &    \cdots& * \\
 & \ddots & \ddots&  \vdots \\
\vdots & & \ddots &  D \\
0 & \cdots &  &   A
\end{bmatrix},
$$
where $A =\sum_{k=0}^{p^r-1} A_k$ and $D = \sum_{k=1}^{p^r-1} kA_k$.
\end{corollary}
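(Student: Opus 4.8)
The plan is to read off the two relevant diagonals directly from the entry formula already established in Corollary~\ref{xinbx}, so that essentially all the content has been front-loaded into that result. The key structural observation is that the formula
\[
(X^{-1}BX)_{j,\, j+\ell} = \sum_{i=\ell}^{p^r-1} \binom{i}{\ell} A_i
\]
is independent of $j$. This immediately tells us that $X^{-1}BX$ is constant along each of its diagonals, i.e.\ it is an upper triangular Toeplitz matrix over $M$; combined with the upper triangularity from Theorem~\ref{xinbx_upper}, this is exactly the banded shape asserted in the statement, with a single value $A$ repeated down the main diagonal and a single value $D$ repeated down the first superdiagonal.

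The next step is to evaluate the formula at $\ell = 0$ and $\ell = 1$. For $\ell = 0$ we use $\binom{i}{0} = 1$ for every $i \geq 0$, which gives the diagonal entry $(X^{-1}BX)_{j,j} = \sum_{i=0}^{p^r-1} A_i = A$. For $\ell = 1$ we use $\binom{i}{1} = i$, and since $\binom{0}{1} = 0$ the summation effectively begins at $i=1$, yielding the superdiagonal entry $(X^{-1}BX)_{j,\,j+1} = \sum_{i=1}^{p^r-1} i\, A_i = D$. The entries strictly below the diagonal vanish by the upper triangularity already recorded in Theorem~\ref{xinbx_upper}, and the entries further above the superdiagonal (the $*$ in the displayed matrix) correspond to $\ell \geq 2$ and need not be computed, as the statement leaves them unspecified.

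There is no genuine obstacle here: the corollary is a pure bookkeeping consequence of Corollary~\ref{xinbx}. The only point requiring any care is tracking the lower limits of the summations — noting that $\binom{i}{\ell} = 0$ whenever $i < \ell$ under the paper's conventions, so that the sum over $i$ may be taken to start at $i = \ell$ without changing its value. Once this is observed, the identifications $A = \sum_{k=0}^{p^r-1} A_k$ and $D = \sum_{k=1}^{p^r-1} k A_k$ follow at once, completing the proof.
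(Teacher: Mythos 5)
Your proof is correct and is exactly the paper's argument: the paper states that this corollary ``is obtained by evaluating the formula in Corollary~\ref{xinbx} for $\ell = 0,1$,'' which is precisely your computation of the diagonal and superdiagonal entries, with the Toeplitz and upper-triangular structure supplied by Corollary~\ref{xinbx} and Theorem~\ref{xinbx_upper}. No further comment is needed.
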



\section{Murasugi's Theorem for Almost Classical Knots}
\label{sec:virtualMurasugi}

In this section, we prove Theorem~\ref{virtmur-0} from the Introduction, which is the analogue of Murasugi's Theorem~\ref{classicalmur} for periodic almost classical knots. We begin by restating the result. First, recall that for a $q$-periodic almost classical knot $K$, Corollary~\ref{ACbraidrep} allows us to write $K = \widehat{{\beta}^{q}}$ for some $k$-strand virtual braid $\beta$ that admits an Alexander numbering.

\begin{theorem}\label{virtmur-1}
Let $K = \widehat{{\beta}^{q}}$ be a $q$-periodic almost classical knot diagram, where $\beta$ a $k$-strand virtual braid
that admits an Alexander numbering, and $q=p^r$ a prime power.
Then $K_* = \widehat{\beta}$, and
\begin{enumerate}
\item $\Delta_{K_*}(t)$ divides $\Delta_{K}(t)$ in $\ZZ[t^{\pm 1}],$ and
\item $\Delta_{K}(t) ~\doteq~ \left(\Delta_{K_*}(t)\right)^{q} \left(f(t) \right)^{q - 1} \mod p,$ 
where $f(t) = {\textstyle \sum_{i=1}^{k}} t^{\lambda_i}$ and $\lambda_i$ is the Alexander number on the $i$-th strand of $\beta$.
\end{enumerate}
\end{theorem}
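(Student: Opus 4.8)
The plan is to dispatch part (1) immediately and to spend the real effort on part (2), where the prime-power hypothesis enters through the characteristic-$p$ Jordan form of a circulant.

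For part (1), I would simply invoke Corollary~\ref{divisible-three} with $\ell=1$: since $K$ is $q$-periodic with quotient $K_*$, we have $\Delta^1_{K_*}\mid \Delta^1_K$ in $\ZZ[t^{\pm1}]$, and for an almost classical knot the first elementary ideal is principal (Proposition~\ref{linearcombo}), so $\Delta^1=\Delta$ is the Alexander polynomial. Nothing further is needed here, and both $K$ and $K_*$ qualify because the quotient of an almost classical periodic diagram is almost classical (Corollary~\ref{ACquotient}).

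For part (2), first use Corollary~\ref{ACbraidrep} to write $K=\widehat{\beta^q}$ with $\beta\in\VB_k$ almost classical, and form the Jacobian $B$ of Definition~\ref{method-2}. By the periodic-braid structure \eqref{A0andA1}, $B$ is block circulant $C(A_0,A_1,0,\ldots,0)$ with $A_0+A_1=\mathcal A$ the Definition~\ref{method-2} Jacobian of $K_*$, and crucially $A_i=0$ for $i\ge 2$. Working over $\F_p$ with $q=p^r$, I would conjugate by the matrix $X$ of Theorem~\ref{xinvpx}; by Corollary~\ref{xinbx} the $(a,a+\ell)$-block of $\tilde B:=X^{-1}BX$ is $\sum_{i\ge\ell}\binom{i}{\ell}A_i$, which (using $A_i=0$ for $i\ge 2$) collapses to $\mathcal A$ on the diagonal, $A_1$ on the first superdiagonal, and $0$ elsewhere. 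Thus $\tilde B$ is \emph{block bidiagonal}; this collapse is exactly the structural payoff of using Definition~\ref{method-2} in place of Definition~\ref{method-1}.

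Next I would record the linear algebra of the single block $\mathcal A$. Since $K_*$ is almost classical, $\mathcal A$ carries the unit-coefficient left null vector $L$ of Lemma~\ref{LC-method-2} (with $L_{S_i}=t^{\lambda_i}$ on the $S$-rows) together with the all-ones right null vector $\mathbf 1$ coming from the Fox identity; hence $\operatorname{adj}(\mathcal A)\doteq \Delta_{K_*}\,\mathbf 1 L^{T}$, so deleting any $S$-row $i$ and any $x$-column $j$ from $\mathcal A$ produces a minor $\doteq \Delta_{K_*}\,t^{\lambda_i}$ (the value is independent of $j$ because $\mathbf 1$ is all-ones). Because $K$ is almost classical, $\cE_1(B)=\cE_1(\tilde B)$ is principal (Lemma~\ref{LC-method-2}), so $\Delta_K\bmod p$ equals, up to a unit, $\det\tilde B'$, where $\tilde B'$ is obtained from $\tilde B$ by deleting one $S$-row $i_0$ from the last block and one $x$-column $j_0$ from the first block.

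The heart of the argument is the evaluation of $\det\tilde B'$ by a permutation (flow) expansion adapted to the bidiagonal shape. The only nonzero entries of the superdiagonal block $A_1$ link the $S$-rows of block $a$ to the $x$-columns of block $a+1$, each with entry $-1$; since the first block-column is short one $x$-column and the last block-row short one $S$-row, every nonzero permutation must route a single ``defect'' down the chain, reserving in each block $a$ one $S$-row $S_{d_a}$ to couple into the $x$-column $x_{d_a}$ of block $a+1$. What remains in each block is a minor of $\mathcal A$ of the type above, contributing $\doteq \Delta_{K_*}\,t^{\lambda_{d_a}}$ (and $\doteq \Delta_{K_*}\,t^{\lambda_{i_0}}$ in the last block), with the all-ones null vector decoupling the incoming column indices. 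Summing over the $q-1$ free choices $d_0,\ldots,d_{q-2}\in\{1,\ldots,k\}$ and collecting the $q-1$ coupling factors $-1$ gives
\[
\det\tilde B' \;\doteq\; (-1)^{q-1}\,\Delta_{K_*}^{\,q}\,t^{\lambda_{i_0}}\Big(\sum_{d=1}^{k} t^{\lambda_d}\Big)^{q-1} \;=\; \Delta_{K_*}^{\,q}\,t^{\lambda_{i_0}} f(t)^{q-1},
\]
and comparing with $\det\tilde B'\doteq \Delta_K\,t^{\lambda_{i_0}}$ yields $\Delta_K\doteq \Delta_{K_*}^{q}f(t)^{q-1}\bmod p$. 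The main obstacle is precisely this final computation: organizing the flow expansion correctly and verifying that the permutation signs are uniform across all relay choices, so that the terms assemble into a clean $(q-1)$-st power rather than partially cancelling; the decoupling furnished by the all-ones null vector is exactly what forces the sum to factor as $f(t)^{q-1}$.
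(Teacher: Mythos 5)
Your proposal is correct in outline and arrives at the right formula, but it evaluates the key determinant by a genuinely different route from the paper. Part (1) is handled identically (Corollary~\ref{divisible-three}). For part (2), both arguments start the same way: realize $K=\widehat{\beta^q}$ with $\beta$ almost classical, form the block circulant Jacobian, conjugate by $X$ to get the block bidiagonal matrix with $\mathcal A=A_0+A_1$ on the diagonal and $A_1$ on the superdiagonal, and exploit the two null vectors of $\mathcal A$ (all-ones on the right from the Fox identity, unit coefficients $\omega_i$ on the left from Lemma~\ref{LC-method-2}). Where you diverge is the endgame. The paper performs explicit row and column operations on each block --- replacing the first column of each diagonal block by the sum of all columns and the last row by the linear combination $\sum_i\vartheta_i(\text{row})_i$ --- which kills those entries in the $\mathcal A$ blocks and concentrates the superdiagonal contribution into the single corner entry $\widetilde u_D=\sum_{i,j}\omega_i (A_1)_{i,j}=-\sum_i t^{\lambda_i}$; the determinant of the resulting matrix is then read off from a visibly block-triangular form as $(\det\bar A)^q(\widetilde u_D)^{q-1}$. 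You instead use the rank-one adjugate $\operatorname{adj}(\mathcal A)\doteq\Delta_{K_*}\mathbf 1 L^T$ and a permutation (flow) expansion over relay choices $d_1,\dots,d_{q-1}$. Your version is more conceptual and has the minor advantage of working with Definition~\ref{method-2} throughout (the paper first proves the factorization shape with Definition~\ref{method-1} and only then switches to Definition~\ref{method-2} to identify $f$), but the paper's row/column operations buy something concrete: they sidestep all sign bookkeeping.

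That sign bookkeeping is the one real gap. You assert that ``the permutation signs are uniform across all relay choices'' and explicitly name it as the main obstacle without resolving it. The claim is true --- in the Laplace expansion the factor $(-1)^{d_a}$ from deleting row $S_{d_a}$ in block $a$, the factor $(-1)^{d_a}$ from deleting column $x_{d_a}$ in block $a+1$, and the two $d_a$-dependent signs coming from the relay displacement cancel in fourth powers, so each relay choice enters with the same sign and the sum does factor as $f(t)^{q-1}$ --- but this must actually be checked, and your proof as written does not do so. A second, smaller gap: you justify $\det\tilde B'\doteq\Delta_K$ by citing only the principality of $\cE_1$, which does not by itself identify \emph{which} maximal minors are generators. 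The correct justification is that the transformed null vectors of $\tilde B$ are supported on the first block-column (with entries $1$) and the last block-row (with unit entries $\pm t^{\lambda_i}$), so precisely the minors obtained by deleting a column from the first block and a row from the last block generate $\cE_1$; your choice of $i_0,j_0$ is the right one, but the reason needs to be stated.
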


\begin{proof}

Part 1 follows from Corollary \ref{divisible-three}.
We divide the proof of part 2 into several claims.

\begin{claim}\label{claim-1}
${\displaystyle \Delta_K(t) \doteq (\Delta_{K_{*}}(t))^{q}(f(t))^{q - 1} \mod p}$ for some $f(t) \in \F_p[t^{\pm 1}]$. 
\end{claim}

The proof of this claim requires extensive matrix manipulation which we now present.
Let $B$ be the block circulant Jacobian matrix for $K$ constructed in Definition~\ref{method-1} and written out as in Equation \eqref{eq:cm}. We will assume it has $n \times n$ blocks, or equivalently that there are $n$ crossings in each period of $K$. Then Theorem~\ref{thm:jaco} and Corollary~\ref{xinbx-2}  
show that 

$$
B \cong 
X^{-1}BX =  
\begin{bmatrix}
A & D &    \cdots& * \\
 & \ddots & \ddots&  \vdots \\
\vdots & & \ddots &  D \\
0 & \cdots &  &   A
\end{bmatrix} \mod p,
$$
where $A=\sum_{k=0}^{q-1} A_k$ is the Jacobian of $K_*$ and $D = \sum_{k=1}^{q-1} kA_k$.

Set
$$C =\begin{bmatrix} 
A & D &    \cdots& * \\
 & \ddots & \ddots&  \vdots \\
\vdots & & \ddots &  D \\
0 & \cdots &  &   A
\end{bmatrix},$$
which is an $nq \times nq$ matrix written in block form with $n\times n$ blocks. 

Let $\bar{A}$ and $\bar{C}$ denote the matrices  $A$ and $C$ with their last row and first column removed. So $\bar{A}$ is an $(n-1)\times (n-1)$ matrix and $\bar{C}$ is an $(nq-1) \times (nq-1)$ matrix. Then modulo $p$,
the  Alexander polynomials of $K$ and $K_*$ are given by 
$$\Delta_K(t) \doteq \det (\bar{C}) \mod p \quad \text{ and } \quad  \Delta_{K_*}(t)  \doteq \det (\bar{A}) \mod p.$$

Let ${A'}$  be the $(n-1) \times n$ matrix obtained by removing the last row from $A$, and let ${A''}$ be the $n \times (n-1)$ matrix obtained by removing the first column of $A$. Also let $0_n$ be the $n \times n$ matrix of zeroes, $0_n'$ the $(n-1) \times n$ matrix of zeroes, and $0_n''$ the $n \times (n-1)$ matrix of zeroes. 
Note that $\bar{0}_n = 0_{n-1}$.

Using these to rewrite $\bar{C}$, we get

$$
\bar{C} = \left[
\begin{array}{c c c c c c}
A'' & D & * & \hdots & \hdots & * \\
0_n'' & A & D & \ddots & & \vdots  \\
\vdots & {0_n} & \ddots & \ddots & \ddots & \vdots    \\
\vdots &   & \ddots  & \ddots & \ddots & *   \\
\vdots &  &  & {0_n} & A & D \\
\bar{0}_n & \hdots & \hdots & 0_n' & 0_n' & A'  \\
\end{array}\right].
$$

Notice that  $\bar{A}$ is a submatrix of each of the $q$  block terms $A$, $A'$, and $A''$ appearing on the block diagonal of $\bar{C},$ and our goal is to extract those terms using row and column operations on $\bar{C}$ to reduce it to an upper block triangular matrix with $\bar{A}$ blocks on the diagonal. 

For that, we require some additional notation.
Let $r_A=[A_{n,2}, \ldots, A_{n,n}]$ be the last row of $A$ minus the first entry, let $c_{A}=[A_{1,1}, \ldots, A_{n-1,1}]^t$ be the first column of $A$ minus the last entry, and let $u_A=A_{1,n}$ be the bottom left corner entry for $A$.  We can now rewrite $A, A', A''$ in terms of $c_A, r_A,$ and $u_A$ as

$$
A = \left[
\begin{array}{c | c c c}
 & & & \\
c_{A} & & \bar{A} & \\
 & & & \\ \hline \\
u_A & & r_A & \\
\end{array}\right],
\quad
A' = \left[
\begin{array}{c | c c c}
 & & & \\
c_{A} & & \bar{A} & \\
 & & & \\ 
\end{array}\right],
\quad \text{ and } \quad
A'' = \left[
\begin{array}{c c c}
 & &  \\
 & \bar{A} & \\
  & & \\ \hline \\
  & r_A & \\
\end{array}\right].
$$

Further, let $r_D, c_D,$ and $u_D$ be the corresponding row, column, and bottom left corner entry of $D$, which we use to write
$$
D = \left[
\begin{array}{c | c c c}
 & & & \\
c_{D} & & \bar{D} & \\
 & & & \\ \hline
u_D & & r_D & \\
\end{array}\right],
$$
where $\bar{D}$ is the $(n-1) \times (n-1)$ matrix  $D$ with its last row and first column removed.

Using these matrices, we can rewrite $\bar{C}$ as:

$$
\bar{C} = \left[
\begin{array}{c c c c c c c c}
\bar{A} & c_D & \bar{D} & * & * & \hdots & \hdots & * \\
r_A & u_D & r_D & * & * & \hdots & \hdots & * \\
0_{n-1} & c_{A} & \bar{A} & c_D & \bar{D} & \ddots & & \vdots \\
0_{1 \times (n-1)} & u_A & r_A & u_D & r_D & \ddots & \ddots & \vdots \\
\vdots & \ddots &  &  & \ddots &  & \ddots & \vdots \\
\vdots & & \ddots &  & c_{A} & \bar{A} & c_D & \bar{D} \\
\vdots & &  & \ddots & u_A & r_A & u_D & r_D \\
0_{n-1} & \hdots & \hdots & \hdots & 0_{(n-1) \times 1} \!\! & 0_{n-1} & c_{A} & \bar{A} 
\end{array}\right].
$$

Since $A$ is the Jacobian of the quotient knot $K_*$, we know that the sum of its columns equals zero, in other words, $\sum_{j=1}^n A_{*,j} =0$. (Here, $A_{*,j}$ denotes the $j$-th column of $A$.)  Further, since $K_*$ is almost classical, Proposition~\ref{linearcombo} shows that there is a linearly dependence among the rows. More specifically, we have units $\vartheta_1,\ldots, \vartheta_n \in \ZZ[t^{\pm 1}]$ such that $\sum_{i=1}^n \vartheta_i A_{i,*} =0$. (Here, $A_{i,*}$ denotes the $i$-th row of $A$.)
Thus, replacing the first column in $A$ by the sum of all its columns, and replacing the last row by the linear combination $\sum_{i=1}^n \vartheta_i A_{i,*}$, we obtain the matrix
$$\widetilde{A} = \left[
\begin{array}{c | c c c}
 & & & \\
0'' & & \bar{A} & \\
 & & & \\ \hline \\
0 & & 0' & \\
\end{array}\right],$$
where $0'=0_{1 \times (n-1)}$ is a row of zeros and $0'' = 0_{(n-1) \times 1}$ is a column of zeros.

Performing the same row and column operations to $D$ gives the matrix
$$
\widetilde{D} = \left[
\begin{array}{c | c c c}
 & & & \\
\widetilde{c}_{D} & & \bar{D} & \\
 & & & \\ \hline \\
\widetilde{u}_D & & \widetilde{r}_D & \\
\end{array}\right],$$
where $\widetilde{c}_D$ is the sum of the columns in $D$ minus the last entry,
$\widetilde{r}_D = \sum_{i=1}^n \vartheta_i D_{i,*}$ is the linear combination of the rows in $D$ minus the first entry, and
$$\widetilde{u}_D =\sum_{i,j=1}^n \vartheta_i D_{i,j}.$$ 
(This is proved in Claim~\ref{claim-2} below.) 
The result of performing these operations on each of the blocks of $\bar{C}$ gives the matrix

$$
\widetilde{C} = \left[
\begin{array}{c c c c c c c c c c}
\tikzmark{left}\bar{A} & \widetilde{c}_D & \bar{D} &  &  &  &  &  &  &  \\
{0'} & \widetilde{u}_D \tikzmark{right} & \widetilde{r}_D &  & &  &    &   & \text{\huge{*}} &  \\ 
\DrawBox[thick]
 & 0'' & \tikzmark{left}\bar{A} & \widetilde{c}_D & \bar{D} &  &  &  &  &  \\
 & 0 & 0' & \widetilde{u}_D\tikzmark{right} & \widetilde{r}_D & \ddots &  &  &  &  \\
\DrawBox[thick]
 &  &  &  &  \ddots&    & \ddots &  &  &  \\
 & & & & & \ddots&  & \ddots & & \\
 & & & & & & \ddots&   & \ddots & \\
 &  &  &  &  &  &  0'' & \tikzmark{left}\bar{A} & \widetilde{c}_D & \bar{D} \\
 & \text{\huge0} &  &  &  &  & 0 &0'  & \widetilde{u}_D\tikzmark{right} & \widetilde{r}_D \\
\DrawBox[thick]
  &  &  &  &  &  &  &  & 0'' & \tikzmark{left}\bar{A}\tikzmark{right}  
\end{array}\right],
\DrawBox[thick]
$$
where $0'=0_{1 \times (n-1)}$ and $0'' = 0_{(n-1) \times 1}$.


Expanding along the block diagonal of $\widetilde{C}$, we compute that:
\begin{eqnarray*}
\det (\widetilde{C}) &=& \det(\bar{A}) \left(\det 
\begin{bmatrix} \bar{A} & \tilde{c}_D \\
0 & \tilde{u}_D
\end{bmatrix}\right)^{q-1} \mod p \\
&\doteq& \det (\bar{A})(\tilde{u}_D \det (\bar{A}))^{q-1} \mod p \\
&\doteq& (\det (\bar{A}))^{q}(\tilde{u}_D)^{q-1} \mod p.
\end{eqnarray*}
Since $\Delta_K(t) \doteq \det (\bar{C}) \mod p$ and $\Delta_{K_*}(t) = \det (\bar{A})$, the equations above imply that 
$$\Delta_K(t) \doteq (\Delta_{K_{*}}(t))^{q}(f(t))^{q - 1} \mod p,$$
provided we take $f(t) = \widetilde{u}_D$. This completes the proof of Claim~\ref{claim-1}. 

The last step in proving  Theorem~\ref{virtmur-1} is to show that
$$f(t) = \sum_{i=1}^{k}t^{\lambda_i},$$
where $\lambda_i$ is the Alexander number on the $i$-th strand at the top of $\beta$. 

Before doing that, we shall prove the following
\begin{claim}\label{claim-2}
${\displaystyle \widetilde{u}_D = \sum_{i=1}^n\sum_{j=1}^n \vartheta_i D_{i,j}}$
\end{claim}

The element $\widetilde{u}_D$ is the bottom left corner entry of $\widetilde{D}$, the matrix obtained by performing row and column operations to $D$. Specifically, $\widetilde{D}$ is obtained in two steps. The first step is to replace the first column of $D$ by the sum $\sum_{j=1}^n D_{*,j}$ of all its columns. Here, $D_{*,j}$ denotes the $j$-th column of $D$. 

Let $D'$ be the matrix obtained after the first step. The second step is to replace the last row of $D'$ by the linear combination $\sum_{i=1}^n \vartheta_i D'_{i,*}$. Here, $D'_{i,*}$ denotes the $i$-th row of $D'$, and $\vartheta_i$ is the unit in $\ZZ[t^{\pm 1}]$ whose existence is guaranteed by Proposition~\ref{linearcombo}.  Therefore, the bottom left entry in the resulting matrix $\widetilde{D}$ is given by 
$$\widetilde{u}_D =  \sum_{i=1}^n \vartheta_i D'_{i,1} = \sum_{i=1}^n \sum_{j=1}^n \vartheta_i D_{i,j}.$$
Note that $ D'_{i,1} = \sum_{j=1}^n D_{i,j}$ since $D'_{i,j} = D_{i,j}$ for $j \neq 1$.
This completes the proof of Claim~\ref{claim-2}.

We are now ready to complete the proof of Theorem~\ref{virtmur-1}. To that end, suppose now that $B$ is the block circulant Jacobian matrix obtained by applying Definition~\ref{method-2} to the braid $\beta^q \in \VB_k$. It follows that 
$$B=
\begin{bmatrix}
A_0 & A_1 & 0 & \cdots & 0 \\ 
0 & A_0 & A_1& \cdots & 0 \\ 
\vdots & \ddots & \ddots & \vdots\\ 
A_1 & 0& \cdots & 0& A_0 
\end{bmatrix},$$
That is, $A_\ell$ is the zero matrix for $\ell \geq 2.$ 
Now Corollary~\ref{xinbx-2} implies that 
$$C:=X^{-1}BX= \begin{bmatrix} 
A & D &    \cdots& 0 \\
 & \ddots & \ddots&  \vdots \\
\vdots & & \ddots &  D \\
0 & \cdots &  &   A
\end{bmatrix},$$
where $A=A_0+A_1$ and $D = A_1$. Note that the blocks in $C$ are $(n+k) \times (n+k)$ matrices. After performing the corresponding row and column operations to $\bar{C}$,  Claim~\ref{claim-2} implies that
$$f(t) = \widetilde{u}_D =  \sum_{i=1}^{n+k} \sum_{j=1}^{n+k} {\omega_i} A_1(i,j).$$
Here, the coefficients $\omega_1,\ldots, \omega_{n+k}$ are the units in $\ZZ[t^{\pm 1}]$ whose existence is guaranteed by Lemma~\ref{LC-method-2}.

Equation \eqref{A0andA1} implies that
$$
A_1 = \left[
\begin{array}{c|c}
0_{n \times k} & 0_{n \times n} \\ \hline
-I_k & 0_{k \times n} \\
\end{array} \right],
$$
so $(A_1)(n+i,i) = -1$ for $1 \le i \le k$ and $(A_1)_(i,j) = 0$ otherwise.

Thus, we have
\begin{eqnarray*}
f(t) &=& \sum_{i=1}^{n+k} \sum_{j=1}^{n+k} \omega_{i} A_1(i,j) \\
&=& \sum_{i=1}^{k} - \omega_{n+i} \doteq \sum_{i=1}^{k}  \omega_{n+i}.
\end{eqnarray*}

(The last step holds because $f(t)$ is only defined up to multiplication by $\pm t^\ell$.) 
From Lemma~\ref{LC-method-2}, we have that $\omega_{n+i} = t^{\lambda^S_{i}}$ for $1 \le i \le  k$, where recall that $\lambda^S_i$ is the Alexander number on the $i$-th strand of $\beta.$ (That same Alexander number is denoted simply $\lambda_i$ here.) Therefore, it follows that
$$f(t) \doteq \sum_{i=1}^{k} \omega_{n+i} = \sum_{i=1}^{k}t^{\lambda_i}$$
as desired.
\end{proof}

By constructing almost classical braids $\beta$ with a given polynomial $f(t) = \sum_{i=1}^k t^{\lambda_i}$, one can realize $q$-periodic knots $K$ as the closure of $\beta^q$ for many different prime powers $q=p^r.$
We present an example that illustrates this idea.

\begin{figure}[h]
\begin{center}
\includegraphics[scale=0.7]{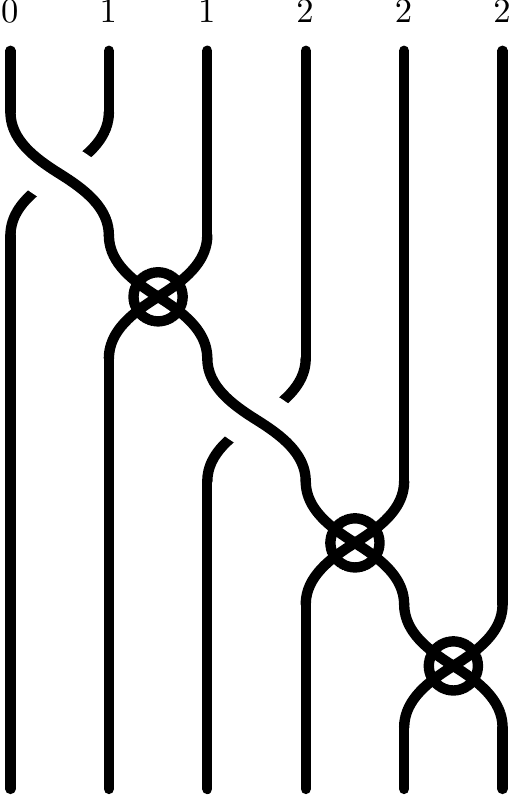}
\end{center} 
\caption{The braid $\beta=\sigma_1 \tau_2\sigma_3\tau_4\tau_5$ is almost classical.} \label{ACbraid-6}
\end{figure}

\begin{example}
Let $f(t) = 1+2t+3t^2$ and set $k = f(1) = 6$ (so $\beta$ will have $6$ strands). We start by labeling the strands at the top of the braid in $\VB_6$ with Alexander numbers,  which are $0,1,1,2,2,2$ going from left to right. (The more general process and reasoning for our choices will be described below). We then form the braid $\beta$ which drags the left-most strand across all the others with either classical or virtual crossings (the choice of classical or virtual is determined by the Alexander numbering). In this case, we get the 
braid $\beta=\sigma_1 \tau_2\sigma_3\tau_4\tau_5,$ see Figure~\ref{ACbraid-6}. Notice that $\beta$ is almost classical and its closure $\widehat{\beta}$ represents the trivial knot. Notice further that $\beta^q$ closes up to a virtual knot (as opposed to a virtual link) for any prime power $q=p^r$ as long as $p \neq 2,3$. For such $q$, the closure $K = \widehat{\beta^q}$ will be a q-periodic almost classical knot diagram with trivial quotient. Applying Theorem~\ref{virtmur-1}, we see that $$
\Delta_K(t) \doteq f(t)^{q-1} = (1+2t+3t^2)^{q-1} \mod p.$$ 
\end{example}

In this way, we can realize many different Alexander polynomials as arising from $q$-periodic almost classical knots. The next result gives a general construction along these lines.

\begin{proposition}
Suppose $f(t) = a_0 + a_1 t + \cdots + a_{n} t^{n} \in \ZZ[t^{\pm 1}]$ is a polynomial satisfying $f(1) \not\equiv 0 \mod p$. Assume also that $a_i > 0$ for $i=0,\ldots, n.$ Then for $q=p^r$, there exists a $q$-periodic almost classical knot $K$ with trivial quotient $K_*$ and
$\Delta_K(t) \doteq (f(t))^{q-1} \mod p$.  
\end{proposition}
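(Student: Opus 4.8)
The plan is to follow and generalize the construction in the preceding example: set $k = f(1) = \sum_{i=0}^n a_i$, build a $k$-strand almost classical braid $\beta \in \VB_k$ whose closure is the unknot and whose strand Alexander numbers realize $f(t)$, and then take $K = \widehat{\beta^q}$ and apply Theorem~\ref{virtmur-1}.

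To construct $\beta$, I would label the $k$ strand positions at the top $1, \ldots, k$ and assign them Alexander numbers in non-decreasing order, namely $a_0$ copies of $0$, then $a_1$ copies of $1$, and so on up to $a_n$ copies of $n$. Because $a_i > 0$ for every $i$, no value in $\{0, 1, \ldots, n\}$ is skipped, so the numbers at any two adjacent positions differ by $0$ or $1$. I then let $\beta$ drag the leftmost strand rightward past all the others, inserting at the crossing of positions $j$ and $j+1$ a classical generator $\sigma_j$ when the two position numbers differ by $1$ and a virtual generator $\tau_j$ when they are equal. For $f(t) = 1 + 2t + 3t^2$ this recovers the braid $\beta = \sigma_1 \tau_2 \sigma_3 \tau_4 \tau_5$ above.

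The next step is to verify that this position numbering is a genuine Alexander numbering exhibiting $\beta$ as almost classical. A classical crossing between numbers $m$ and $m+1$ and a virtual crossing between equal numbers each satisfy the conditions of Figure~\ref{Alexander-Numbering-Def-2}, and since the numbering is carried by the positions it is preserved along the braid and in particular agrees at the top and bottom of $\beta$. By construction the associated polynomial is $f_\beta(t) = \sum_{i=1}^k t^{\lambda_i} = \sum_{i=0}^n a_i t^i = f(t)$, where $\lambda_i$ is the number at position $i$. The underlying permutation of $\beta$ is the $k$-cycle carrying position $1$ to position $k$ and each other position $j$ to $j - 1$, so $\widehat{\beta}$ has a single component; and since $\beta$ simply transports one strand across all the others, its closure $\widehat{\beta}$ is the unknot, as in the example above. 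Thus $K_* = \widehat{\beta}$ is trivial and $\Delta_{K_*}(t) \doteq 1$.

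Finally, the permutation of $\beta^q$ is the $q$-th power of a $k$-cycle, which splits into $\gcd(k, q)$ cycles. The hypothesis $f(1) \not\equiv 0 \mod p$ says precisely that $p \nmid k$, so $\gcd(k, p^r) = 1$ and $K = \widehat{\beta^q}$ is a knot rather than a link. It is $q$-periodic as the closure of a $q$-th power and almost classical because $\beta$, hence $\beta^q$, is. Theorem~\ref{virtmur-1} then yields $\Delta_K(t) \doteq (\Delta_{K_*}(t))^q (f(t))^{q-1} \doteq (f(t))^{q-1} \mod p$, using $\Delta_{K_*}(t) \doteq 1$. The only real work lies in the construction itself: one must arrange the numbers in non-decreasing order so that adjacent differences never exceed $1$ — exactly where the hypothesis $a_i > 0$ enters — and confirm that the resulting diagram is Alexander numberable with matching top and bottom. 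Once $\beta$ is in hand, the condition $f(1) \not\equiv 0$ secures that $\widehat{\beta^q}$ is a knot and Theorem~\ref{virtmur-1} completes the computation at once.
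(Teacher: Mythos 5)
Your proposal is correct and follows essentially the same route as the paper's own proof: the same assignment of $a_i$ strands to Alexander number $i$ in non-decreasing order, the same braid dragging the leftmost strand across all others with $\sigma_j$ or $\tau_j$ according to whether adjacent labels differ by one or agree, the same observation that $a_i>0$ prevents skipped values, the same $k$-cycle/$\gcd(k,q)=1$ argument using $f(1)\not\equiv 0 \bmod p$, and the same final appeal to Theorem~\ref{virtmur-1} with $\Delta_{K_*}\doteq 1$. No gaps.
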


\begin{proof}
We will construct an almost classical braid $\beta \in \VB_k$ whose closure $\widehat{\beta}$ is trivial with the property that, for the given polynomial, we have $f(t)  = \sum_{i=1}^k t^{\lambda_i} \mod p,$ where $\lambda_i$ denotes the Alexander number on the $i$-th strand at the top of $\beta$.  The result will then follow by applying Theorem~\ref{virtmur-1} to the closure $K=\widehat{\beta^q}$.

Let $k=f(1)$ and notice that since $f(1) \not\equiv 0 \mod p$, it follows that $k$ is relatively prime to $p$ (otherwise, we could end up with $\widehat{\beta^q}$ being a virtual link, which we explain near the end of the proof).
Let $\beta \in \VB_k$ be a braid with Alexander numbers along the top strands given by
$$\overbrace{0,\ldots, 0}^{a_0},\; \overbrace{1 \ldots, 1}^{a_1}, \ldots, \overbrace{n,\ldots, n}^{a_n}$$
as one goes from left to right.
We form the braid $\beta$ by crossing the left-most strand across all the others, using a negative real crossing whenever the Alexander numbers allow it (In other words, whenever the Alexander number on the left is one less than the Alexander number on the right) and a virtual crossing otherwise (that is, whenever the Alexander numbers on the two strands are equal).
The condition that each coefficient $a_i>0$ is positive ensures that, for every Alexander number $i$ with $0 \leq i \leq n,$ there is at least one strand labeled $i$. (Otherwise, $\widehat{\beta}$ would be a virtual link with two or more components.) 

In any case, we can write $\beta$ as the braid word 
$$\beta = \theta_1 \theta_2 \cdots \theta_{k-1},\quad \text{where} \quad 
\theta_i =\begin{cases} \sigma_i & \text{if $\lambda_{i+1}=\lambda_{i}+1$,}\\
\tau_i & \text{if $\lambda_{i+1}=\lambda_{i}$.}
\end{cases}$$
It follows that $\beta$ is almost classical (since it was Alexander numberable), and the permutation induced by $\beta$ is the $k$-cycle
$(k, k-1, k-2, \ldots, 1)$ (since each $\theta_i$ corresponds to the transposition $(i \ i+1)$), which has order $k$. Notice, as in Figure~\ref{ACbraid-6}, that the $i$-th strand at the top goes to the $(i-1)$-st at the bottom, for $1<i \le k$, and the first strand goes to the $k$-th. This is what the permutation is reflecting. Thus the closure $K_*=\widehat{\beta}$ is a knot, and in fact an easy argument using real and virtual Reidemeister I moves shows that $K_*$ is trivial. If $p$ is a prime which does not divide $k$ (note that the assumptions of our theorem assumed that $k \neq 0 \mod p$, so $p$ cannot divide $k$), then the closure $K =\widehat{\beta^q}$ for $q=p^r$ is also a virtual knot diagram (as opposed to being a virtual link). The number of components in a link are determined by the number of closed cycles when you write the permutation mentioned above as a product of closed cycles. For $\beta^q$, we have $(k, k-1, k-2, \ldots, 1)^q$, which will give us a $k$-cycle as long as $k$ and $q$ are relatively prime, which they are. By construction, it follows that $K$ is $q$-periodic and almost classical, and its quotient $K_* = \widehat{\beta}$ is trivial. Theorem~\ref{virtmur-1} applies to show that
$\Delta_K(t) \doteq (f(t))^{q-1} \mod p$ as claimed.
\end{proof}


\section{Periods of almost classical knots}
\label{sec:periods}

In this section we apply  Theorem~\ref{virtmur-1}
to the problem of determining the possible periods of an almost classical knot.

We recall some facts about Laurent polynomials.
Let $R$ be an integral domain.  A non-zero Laurent polynomial $f \in R[t^{\pm 1}]$, also written as $f(t)$,  can be expressed
uniquely in the form $f(t) =\sum^n_{j=m} a_j t^j$ where $m, n$ are integers with $m \leq n$,   $a_j \in R$
and where $a_m, a_n \neq 0$.
The {\it span} of $f$,   denoted $\fatness f$, is the non-negative integer $n -m$. 
Since $R$ is assumed to have no zero divisors,
$\fatness f  g  = \fatness f + \fatness g $.
We write $f \doteq g $ if there exists a unit $c \in R$ and an integer $j$
such that $c\,  t^j f(t) = g(t)$. 
If $f  \doteq g $ then $\fatness f = \fatness g $.
%
%
If $R$ is a field, then a non-zero $f \in R[t^{\pm 1}]$ has $\fatness f =0$ if and only if $f$ is trivial, that is, $f \doteq 1$.

Given an integral Laurent polynomial $f \in\ZZ[t^{\pm 1}]$, its reduction modulo a prime $p$,  denoted $f \mod p$, is obtained by reducing the coefficients of $f(t)$ modulo $p$.
We denote  the span of $f\mod p$, as an element of $\F_p[t^{\pm 1}]$, by $\fatness_p f$ (assuming $f\mod p$ is not zero).
Observe that $\fatness_p f \leq \fatness f$.

We make use of the following fact about the Alexander polynomial, $\Delta_K$, of an
almost classical knot $K$.
By \cite[Lemma 7.5]{AC},  $\Delta_K(1) =  \pm 1$
and so  $\Delta_K \mod p$ is never the zero polynomial for any prime $p$.

\begin{proposition}\label{Anew}
Let $K$ be an almost classical knot, $p$ a prime, and $r$ a positive integer.
Assume $\Delta_K \mod p$ is not trivial.
If $K$ has period $p^r$, 
then $\fatness_p \Delta_K \geq p^r -1$.
\end{proposition}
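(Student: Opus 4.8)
The plan is to read off the bound from the Murasugi-type factorization in Theorem~\ref{virtmur-1} by tracking spans in $\F_p[t^{\pm 1}]$. Since $K$ is almost classical and $p^r$-periodic, Corollary~\ref{ACbraidrep} lets me write $K = \widehat{\beta^{p^r}}$ for an almost classical braid $\beta \in \VB_k$ with quotient $K_* = \widehat{\beta}$. Theorem~\ref{virtmur-1}(2) then gives
\[
\Delta_K(t) \doteq \left(\Delta_{K_*}(t)\right)^{p^r}\left(f(t)\right)^{p^r-1} \mod p,
\]
where $f(t) = \sum_{i=1}^k t^{\lambda_i}$. The whole argument is essentially bookkeeping with this one relation.

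First I would verify that both factors on the right are nonzero modulo $p$. This is exactly where the nontriviality hypothesis enters: $\Delta_K \mod p$ nontrivial is in particular nonzero, so the product $(\Delta_{K_*})^{p^r}(f)^{p^r-1}$ is nonzero mod $p$, which forces each of $\Delta_{K_*} \mod p$ and $f \mod p$ to be nonzero. (The fact, recorded just above, that $\Delta_K(1) = \pm 1$ for almost classical $K$ already guarantees $\Delta_K \mod p \neq 0$.) Because $\F_p[t^{\pm 1}]$ is an integral domain, span is additive under products and invariant under $\doteq$, so applying $\fatness_p$ to both sides of the displayed relation yields
\[
\fatness_p \Delta_K = p^r\,\fatness_p \Delta_{K_*} + (p^r-1)\,\fatness_p f.
\]

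To conclude, write $a = \fatness_p \Delta_{K_*} \geq 0$ and $b = \fatness_p f \geq 0$. Since $\Delta_K \mod p$ is nontrivial, $\fatness_p \Delta_K \geq 1$, so $a$ and $b$ cannot both vanish and hence $a + b \geq 1$. Using $p^r \geq p^r - 1$, I then estimate
\[
\fatness_p \Delta_K = p^r a + (p^r-1) b \geq (p^r-1)(a+b) \geq p^r - 1,
\]
which is the asserted inequality.

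The computation is short, and I do not anticipate a real obstacle. The only point needing care is the case $\fatness_p f = 0$, that is, $f \mod p$ is a monomial: then none of the bound can come from the $(f)^{p^r-1}$ factor and it must be supplied entirely by $(\Delta_{K_*})^{p^r}$. Writing the estimate through $p^r a + (p^r-1)b \geq (p^r-1)(a+b)$ absorbs this possibility automatically, so no case analysis is required.
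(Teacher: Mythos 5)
Your proposal is correct and follows essentially the same route as the paper: both invoke the Murasugi-type factorization $\Delta_K \doteq (\Delta_{K_*})^{p^r} f^{p^r-1} \bmod p$ from Theorem~\ref{virtmur-1}, use additivity of $\fatness_p$ over the product, and conclude from nontriviality of $\Delta_K \bmod p$ that at least one of the two spans is positive, giving the bound $p^r-1$. Your inequality $p^r a + (p^r-1)b \geq (p^r-1)(a+b)$ is just a tidier packaging of the paper's ``at worst $\fatness_p f = 1$ and $\fatness_p g = 0$'' case check.
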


\begin{proof}
By Theorem~\ref{virtmur-1}, there are $f, g  \in \ZZ[t^{\pm 1}]$ such that
\[
\Delta_K  \doteq g^{p^r} f^{p^r -1}   \mod p.
\]
It follows that  $\fatness_p \Delta_K = {p^r}\fatness_p g +  (p^r -1) \fatness_p f$. Note that $f$ or $g$ could be trivial.
Since, by assumption, $\Delta_K \mod p$ is not trivial, we have $\fatness_p \Delta_K > 0$  and so one of the numbers  $\fatness_p g$, $\fatness_p f$ is positive. At worst, we have $\fatness_p f = 1$ and $\fatness_p g = 0$. Hence
$\fatness_p \Delta_K \geq p^r -1$.
\end{proof}

An immediate consequence of 
the inequality $\fatness_p \Delta_K \geq p^r -1$ is the following restriction on prime power periods for a given prime $p$.

\begin{corollary} \label{cor:finite-p}
Let $K$ be an almost classical knot and $p$ a prime.
If  $\Delta_K \mod p$ is not trivial, then $p^r$ is a period for $K$ for at most finitely many $r$.
\end{corollary}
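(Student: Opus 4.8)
The plan is to read off the finiteness directly from the span inequality established in Proposition~\ref{Anew}, so the argument is essentially a one-line deduction once that proposition is in hand. First I would fix the knot $K$ and the prime $p$ and record the crucial structural point: the quantity $\fatness_p \Delta_K$ is a single non-negative integer determined by $K$ and $p$ alone, completely independent of any exponent $r$. The hypothesis that $\Delta_K \mod p$ is not trivial guarantees that $\Delta_K \mod p$ is a genuine nonzero element of $\F_p[t^{\pm 1}]$; indeed, since $\Delta_K(1) = \pm 1$ for an almost classical knot by \cite[Lemma 7.5]{AC}, the reduction $\Delta_K \mod p$ is never the zero polynomial, so its span $\fatness_p \Delta_K$ is well defined and finite.

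Next I would invoke Proposition~\ref{Anew}: if $p^r$ is a period of $K$, then $\fatness_p \Delta_K \geq p^r - 1$, equivalently $p^r \leq \fatness_p \Delta_K + 1$. Since the right-hand side is the constant $N := \fatness_p \Delta_K + 1$, independent of $r$, and since $p \geq 2$ forces $p^r \to \infty$ as $r \to \infty$, the inequality $p^r \leq N$ can hold for only finitely many positive integers $r$ (explicitly, for $r \leq \log_p N$). Hence $p^r$ can be a period of $K$ for at most finitely many $r$, which is exactly the assertion of the corollary.

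I do not anticipate any genuine obstacle here, since all the substantive content has already been carried out in Proposition~\ref{Anew}; this corollary is merely the observation that a fixed upper bound on $p^r - 1$ caps the exponent $r$. The only point warranting a word of care is confirming that $\fatness_p \Delta_K$ is genuinely finite and well defined, which rests on the non-triviality (in particular the non-vanishing) of $\Delta_K \mod p$. Once that is noted, the conclusion is immediate.
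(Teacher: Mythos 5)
Your proposal is correct and follows exactly the paper's argument: the paper's proof is the one-line observation that since $\fatness_p \Delta_K \le \fatness \Delta_K$ is a fixed finite quantity, the inequality $p^r - 1 \le \fatness_p \Delta_K$ from Proposition~\ref{Anew} can hold for only finitely many $r$. Your additional remark that $\Delta_K \bmod p$ is nonzero (via $\Delta_K(1) = \pm 1$) is also the justification the paper gives just before stating Proposition~\ref{Anew}.
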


\begin{proof}
Since $\fatness_p \Delta_K \le \fatness \Delta_K$ there are finitely many $r$ such that $p^r-1 \le \fatness_p \Delta_K$.
\end{proof}

The next result is a direct consequence of Proposition~\ref{Anew} and Corollary~\ref{cor:finite-p}. 
\begin{corollary}[Finitely many periods]
\label{cor:finitely-many-periods}
If $K$ is an almost classical knot such that \linebreak
$\fatness_p \Delta_K > 0$ for all primes $p$, then $K$ admits only finitely many periods.   
 \end{corollary}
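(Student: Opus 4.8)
The plan is to show that every period $q$ of $K$ is bounded above by a single constant depending only on $\fatness \Delta_K$, from which finiteness is immediate. First I would unwind the hypothesis: the assumption $\fatness_p \Delta_K > 0$ for all primes $p$ is precisely the statement that $\Delta_K \mod p$ is non-trivial for every $p$, since over the field $\F_p$ a non-zero Laurent polynomial has span zero if and only if it is a unit. Thus both Proposition~\ref{Anew} and Corollary~\ref{cor:finite-p} are available for every single prime $p$, which is what makes the argument go through uniformly.

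The key reduction is a divisibility observation about periodicity. If $K$ admits a $q$-periodic diagram $D$, then $D$ is invariant under rotation by $2\pi/q$ and hence under every power of that rotation; in particular, for each divisor $d > 1$ of $q$ the same diagram $D$ is invariant under rotation by $2\pi/d$, so $K$ is also $d$-periodic. Consequently, for every prime power $p^s$ dividing $q$, the knot $K$ is $p^s$-periodic. This lets me pass from an arbitrary period to its prime power factors, where Proposition~\ref{Anew} and Corollary~\ref{cor:finite-p} apply.

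With this in hand I would bound the primes and the exponents separately. Applying Proposition~\ref{Anew} with $r=1$ to any prime $p \mid q$ (which is a period by the reduction above) gives $p - 1 \leq \fatness_p \Delta_K \leq \fatness \Delta_K$, so every prime dividing a period lies in the finite set $S = \{p \text{ prime} : p \leq \fatness \Delta_K + 1\}$. For each $p \in S$, Corollary~\ref{cor:finite-p} guarantees that $p^r$ is a period for only finitely many $r$; let $r_p$ be the largest such exponent. Writing a given period as $q = \prod_{p \in S} p^{e_p}$, each prime power $p^{e_p}$ divides $q$ and is therefore itself a period, so $e_p \leq r_p$. Hence every period satisfies $q \leq \prod_{p \in S} p^{r_p}$, a fixed finite bound, and $K$ admits only finitely many periods.

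I expect the only genuinely non-routine point to be making the divisibility reduction precise, namely that a $q$-periodic diagram is automatically $d$-periodic for each divisor $d \mid q$, since everything else is a direct combination of Proposition~\ref{Anew} (bounding the primes) with Corollary~\ref{cor:finite-p} (bounding the exponents). This reduction is the same one implicitly used in the proof of Corollary~\ref{strong_vcrossing_bound}, so no new ideas are required, and one could even replace the crude bound $\prod_{p\in S} p^{r_p}$ by a cleaner estimate via Robin's theorem exactly as there.
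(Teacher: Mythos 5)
Your proof is correct and is essentially the argument the paper intends: the paper states Corollary~\ref{cor:finitely-many-periods} without an explicit proof, calling it a direct consequence of Proposition~\ref{Anew} and Corollary~\ref{cor:finite-p}, and your write-up supplies exactly the missing details --- the reduction from a period $q$ to the prime powers dividing it, the bound $p \leq \fatness \Delta_K + 1$ on primes via Proposition~\ref{Anew} with $r=1$, and the bound on exponents via Corollary~\ref{cor:finite-p}. The divisor reduction you single out as the only non-routine step is indeed the same one the paper uses implicitly in the proof of Corollary~\ref{strong_vcrossing_bound}, so your argument matches the intended one.
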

Note that assuming $\fatness_p \Delta_K >0$ is equivalent to assuming that $\Delta_K\mod p$ is non-trivial.
Corollary~\ref{cor:finitely-many-periods} applies in many circumstances and gives a positive answer to Question~\ref{question-1} from the Introduction. For instance, it applies to any classical fibered knot $K$, and shows that such knots are virtually periodic for only finitely many periods. Table~\ref{acks2} lists the Alexander polynomials of the 76 almost classical knots up to 6 crossings, and 
Corollary~\ref{cor:finitely-many-periods} applies in 44 instances to show the given almost classical knot admits only finitely many periods.

The next result applies more generally, but gives a weaker conclusion.

\begin{theorem}[Finitely many prime periods]
\label{thm:finitely-many-periods}
Let $K$ be an almost classical knot such that $\Delta_K$ is not trivial.
If $K$ has prime period $p$,  then $p$ divides a non-zero coefficient of  $\Delta_K$, 
or $\fatness \Delta_K  \geq  p-1$.
In particular, there are at most finitely many primes $p$ for which $K$ has period $p$.
\end{theorem}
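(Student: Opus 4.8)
The plan is to split according to whether the reduction $\Delta_K \bmod p$ is trivial, and then to read off finiteness from the two resulting alternatives. The two facts I would invoke are Proposition~\ref{Anew}, which gives $\fatness_p \Delta_K \geq p^r - 1$ whenever $K$ has period $p^r$ and $\Delta_K \bmod p$ is non-trivial, and the normalization $\Delta_K(1) = \pm 1$ from \cite[Lemma 7.5]{AC}, which guarantees that $\Delta_K \bmod p$ is never the zero polynomial. I would also use the elementary inequality $\fatness_p \Delta_K \leq \fatness \Delta_K$ noted just before Proposition~\ref{Anew}.

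So suppose $K$ has prime period $p$, i.e.\ $r = 1$. If $\Delta_K \bmod p$ is non-trivial, then Proposition~\ref{Anew} (with $r=1$) yields $\fatness_p \Delta_K \geq p - 1$, and combining with $\fatness_p \Delta_K \leq \fatness \Delta_K$ gives the second alternative $\fatness \Delta_K \geq p - 1$. If instead $\Delta_K \bmod p$ is trivial, then $\Delta_K \bmod p \doteq 1$ in $\F_p[t^{\pm 1}]$, so it is a single monomial. Here I would use the hypothesis that $\Delta_K$ is not trivial together with $\Delta_K(1) = \pm 1$ to conclude $\fatness \Delta_K > 0$, so $\Delta_K$ has nonzero integer coefficients at (at least) two distinct powers of $t$. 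Since at most one of these coefficients can survive modulo $p$, at least one nonzero coefficient of $\Delta_K$ must be divisible by $p$, which is the first alternative.

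For the final ``in particular'' statement, I would argue that each alternative is satisfied by only finitely many primes. The inequality $\fatness \Delta_K \geq p - 1$ forces $p \leq \fatness \Delta_K + 1$, so it holds for finitely many $p$ since $\fatness \Delta_K$ is a fixed integer. The divisibility alternative restricts $p$ to the set of prime divisors of the nonzero coefficients of $\Delta_K$; as $\Delta_K$ has only finitely many nonzero integer coefficients, each with finitely many prime factors, this set is also finite. The union of the two finite sets bounds the primes $p$ for which $K$ can have period $p$.

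The main obstacle I anticipate is the trivial-reduction case: one must argue carefully that the collapse of $\Delta_K \bmod p$ to a monomial forces a genuine nonzero coefficient of $\Delta_K$ to be divisible by $p$, and this is exactly where the almost classical normalization $\Delta_K(1)=\pm1$ is needed, both to rule out $\fatness \Delta_K = 0$ and to pin down what triviality modulo $p$ means. The remainder is bookkeeping with spans together with the observation that a nonzero integer has only finitely many prime divisors.
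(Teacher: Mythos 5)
Your proof is correct and follows essentially the same route as the paper: both rest on Proposition~\ref{Anew} with $r=1$, the inequality $\fatness_p \Delta_K \leq \fatness \Delta_K$, and the fact that $\Delta_K(1)=\pm 1$ forces $\fatness\Delta_K>0$ for a non-trivial $\Delta_K$. The only (cosmetic) difference is that you split cases on whether $\Delta_K \bmod p$ is trivial, whereas the paper splits on whether $p$ divides some non-zero coefficient and deduces $\fatness_p\Delta_K=\fatness\Delta_K$ when it does not; the two organizations are logically equivalent and use the same ingredients.
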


\begin{proof}
Let $S$ be the (possibly empty) set of  primes that divide some non-zero coefficient of $\Delta_K$.
Since $\Delta_K$ has only finitely many coefficients, $S$ is finite.
For a prime $p \notin S$, we want to show that, if $K$ has prime period $p$, $\fatness_p K \ge p-1$. Since $p \notin S$, $p$ does not divide any coefficients of $\Delta_K$, and we have $\fatness \Delta_K = \fatness_p \Delta_K$. 
By assumption, $\fatness \Delta_K >0$  and hence for such $p$ we have that $\Delta_K \mod p$ is non-trivial.
Let $p$ be a prime for which $K$ has period $p$, and assume $p \notin S$.
By Proposition~\ref{Anew},  $\fatness \Delta_K = \fatness_p \Delta_K \geq  p-1$.
Since $\fatness \Delta_K \ge p-1$, there are at most finitely many such $p$.
\end{proof}


\smallskip

The following refinement of Proposition~\ref{Anew} will be useful.

\begin{proposition}\label{Bnew}
Let $K$ be an almost classical knot, $p$ a prime, and $r$ a positive integer.
Assume that $\Delta_K \mod p$ is divisible by distinct irreducibles $u_1, \ldots, u_s$
where $s \geq 1$.
If $K$ has period $p^r$, 
then $\fatness_p \Delta_K \geq (p^r -1)\sum^s_{j=1} \fatness_p u_j$.
\end{proposition}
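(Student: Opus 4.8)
The plan is to follow the strategy of Proposition~\ref{Anew}, but to extract more information from the factorization supplied by Theorem~\ref{virtmur-1} by working prime-by-prime in the unique factorization domain $\F_p[t^{\pm 1}]$.

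First I would invoke Theorem~\ref{virtmur-1}: since $K$ is almost classical with period $p^r$, we may write $K = \widehat{\beta^{p^r}}$ and obtain Laurent polynomials $f, g \in \ZZ[t^{\pm 1}]$ (namely $g = \Delta_{K_*}$ and $f = \sum_{i=1}^k t^{\lambda_i}$) with
\[
\Delta_K \doteq g^{p^r} f^{p^r - 1} \mod p.
\]
The ring $\F_p[t^{\pm 1}]$ is a localization of the PID $\F_p[t]$ and hence a UFD, in which the units are exactly the elements $c\, t^j$ with $c \in \F_p^\times$ and $j \in \ZZ$; in particular the relation $\doteq$ preserves the set of non-associate irreducible divisors.

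The key step is the following divisibility observation. Each $u_j$ is irreducible, hence prime in $\F_p[t^{\pm 1}]$, and by hypothesis $u_j$ divides $\Delta_K \mod p$, hence divides $g^{p^r} f^{p^r-1}$. By primality, $u_j$ divides $g$ or $u_j$ divides $f$. In the first case $u_j^{p^r}$ divides $g^{p^r}$, and in the second $u_j^{p^r-1}$ divides $f^{p^r-1}$; since $p^r \geq p^r - 1$, in either case $u_j^{p^r-1}$ divides $\Delta_K \mod p$. Because $u_1, \ldots, u_s$ are distinct (pairwise non-associate) irreducibles, the powers $u_1^{p^r-1}, \ldots, u_s^{p^r-1}$ are pairwise coprime, so their product $\prod_{j=1}^s u_j^{p^r-1}$ also divides $\Delta_K \mod p$.

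Finally I would pass to spans. Writing $\Delta_K \doteq h \cdot \prod_{j=1}^s u_j^{p^r-1} \mod p$ for some $h \in \F_p[t^{\pm 1}]$ and using that the span is additive over products in the integral domain $\F_p[t^{\pm 1}]$, we obtain
\[
\fatness_p \Delta_K = \fatness_p h + (p^r - 1)\sum_{j=1}^s \fatness_p u_j \geq (p^r - 1)\sum_{j=1}^s \fatness_p u_j,
\]
as desired. The argument is essentially formal once the factorization of Theorem~\ref{virtmur-1} is in hand; the only point requiring care is the case analysis establishing that every irreducible factor of $\Delta_K \mod p$ occurs to at least the power $p^r - 1$, together with the coprimality needed to multiply these local divisibilities into a single global one. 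Proposition~\ref{Anew} is recovered as the special case $s = 1$, since $\fatness_p u_1 \geq 1$ for any irreducible $u_1$.
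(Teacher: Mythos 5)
Your proof is correct and follows essentially the same route as the paper's: both apply Theorem~\ref{virtmur-1} to get $\Delta_K \doteq g^{p^r} f^{p^r-1} \bmod p$, argue that each irreducible $u_j$ must divide $g$ or $f$ and hence occur with multiplicity at least $p^r-1$, conclude that $(u_1\cdots u_s)^{p^r-1}$ divides $\Delta_K \bmod p$, and finish by additivity of the span. You simply make explicit the UFD/coprimality details that the paper leaves implicit.
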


\begin{proof}
By Theorem~\ref{virtmur-1}, there are $f, g  \in \ZZ[t^{\pm 1}]$ such that
\[
\Delta_K  \doteq g^{p^r} f^{p^r -1}   \mod p.
\]
Since  $u_j$ divides $\Delta_K \mod p$ and is irreducible, then $u_j$ divides  either $g$ or $f$ and
hence must appear with multiplicity at least $p^r-1$. Therefore, $(u_1 \cdots u_s)^{p^r-1}$ divides $\Delta_K \mod p$, from which the conclusion follows.
\end{proof}

\begin{corollary}\label{Cnew}
Let $K$ be an almost classical knot, $p$ a prime, and $r$ a positive integer.
Assume that $\Delta_K = u_1 \cdots u_s \mod p$ where the $u_j$'s are distinct irreducible factors
and $s \geq 1$.
If $K$ has period $p^r$ 
then $p=2$ and $r=1$.
\end{corollary}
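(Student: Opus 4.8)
The plan is to read off the conclusion directly from Proposition~\ref{Bnew} together with the additivity of the span. The hypothesis $\Delta_K \doteq u_1 \cdots u_s \mod p$ with the $u_j$ distinct irreducibles says precisely that $\Delta_K \mod p$ is squarefree, so each $u_j$ occurs in it with multiplicity one; this is the feature that will be in tension with the multiplicity forced by Murasugi's factorization. Note first that $\Delta_K \mod p$ is a legitimate nonzero element of $\F_p[t^{\pm 1}]$: by \cite[Lemma 7.5]{AC} we have $\Delta_K(1) = \pm 1$, so $\Delta_K$ never reduces to zero, and the hypothesis $s \geq 1$ guarantees it is non-trivial.

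First I would compute $\fatness_p \Delta_K$ from the assumed factorization. Since $\F_p[t^{\pm 1}]$ is an integral domain, the span is additive over products, i.e. $\fatness_p(fg) = \fatness_p f + \fatness_p g$, and $\doteq$ preserves span. Hence
\[
\fatness_p \Delta_K = \fatness_p(u_1 \cdots u_s) = \sum_{j=1}^s \fatness_p u_j .
\]
On the other hand, Proposition~\ref{Bnew} (applicable precisely because the $u_j$ are distinct irreducibles dividing $\Delta_K \mod p$, with $s \geq 1$) gives the lower bound
\[
\fatness_p \Delta_K \;\geq\; (p^r - 1)\sum_{j=1}^s \fatness_p u_j .
\]
Combining the two displays yields $\sum_{j=1}^s \fatness_p u_j \geq (p^r - 1)\sum_{j=1}^s \fatness_p u_j$.

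The final step is to cancel the common factor and extract the arithmetic conclusion. Each $u_j$ is irreducible in $\F_p[t^{\pm 1}]$, hence a non-unit, hence non-trivial, so $\fatness_p u_j > 0$; together with $s \geq 1$ this makes $\sum_{j=1}^s \fatness_p u_j$ a strictly positive integer, which may therefore be divided out. We are left with $1 \geq p^r - 1$, i.e. $p^r \leq 2$. Since $p$ is prime and $r \geq 1$ force $p^r \geq 2$, we conclude $p^r = 2$, and therefore $p = 2$ and $r = 1$, as claimed. The only point demanding care is this last cancellation: it rests entirely on the strict positivity of $\sum_{j=1}^s \fatness_p u_j$, so the role of the hypothesis $s \geq 1$ and of the fact that irreducibles over a field have positive span must be made explicit. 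Everything else is a direct invocation of Theorem~\ref{virtmur-1} (through Proposition~\ref{Bnew}) and the elementary span identities recalled at the start of this section.
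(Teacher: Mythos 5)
Your proof is correct and follows essentially the same route as the paper: the paper likewise applies Proposition~\ref{Bnew} to obtain $\fatness_p \Delta_K \geq (p^r-1)\fatness_p \Delta_K$ and cancels the positive span to conclude $p^r - 1 = 1$. Your extra care in justifying the positivity of $\sum_j \fatness_p u_j$ is a harmless elaboration of the paper's one-line observation that $\fatness_p \Delta_K > 0$.
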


\begin{proof}
The hypothesis on  $\Delta_K$ implies that $\fatness_p \Delta_K >0$.
By Proposition~\ref{Bnew},  $\fatness_p \Delta_K \geq (p^r -1)\fatness_p \Delta_K$, and so $p^r -1 =1$.
  It follows that $p=2$ and $r=1$.
\end{proof}

\begin{corollary}\label{cor-irr}
Let $K$ be an almost classical knot, $p$ a prime, and $r$ a positive integer.
Assume that $\Delta_K = u_1  \mod p$ ,where $u_1$ is irreducible. Then $K$ cannot have period $p^r$ for $p \neq 2$.
\end{corollary}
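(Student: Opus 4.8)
The plan is to obtain this as the immediate $s=1$ specialization of Corollary~\ref{Cnew}, or equivalently to run a short self-contained argument through Proposition~\ref{Bnew}. First I would record the only point requiring any care: since $\Delta_K = u_1 \mod p$ with $u_1$ irreducible, $u_1$ is in particular a non-unit of $\F_p[t^{\pm 1}]$, and a non-zero Laurent polynomial over a field has span $0$ if and only if it is $\doteq 1$, i.e. a unit. Hence $\fatness_p u_1 > 0$, and therefore $\fatness_p \Delta_K = \fatness_p u_1 > 0$; in particular $\Delta_K \mod p$ is non-trivial, so the hypotheses of Proposition~\ref{Bnew} are satisfied with a single distinct irreducible factor $u_1$ (so $s=1$).

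Next, suppose for contradiction that $K$ has period $p^r$. Applying Proposition~\ref{Bnew} with $s=1$ yields
$$\fatness_p \Delta_K \;\geq\; (p^r - 1)\,\fatness_p u_1 \;=\; (p^r - 1)\,\fatness_p \Delta_K.$$
Because $\fatness_p \Delta_K > 0$, I may divide both sides by it to conclude $1 \geq p^r - 1$, that is, $p^r \leq 2$. For $p \neq 2$ one has $p^r \geq 3$, which is a contradiction; hence $K$ cannot have period $p^r$ when $p \neq 2$, as claimed.

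I do not expect any genuine obstacle here: this corollary is purely a bookkeeping consequence of results already established. All of the substantive work has been done upstream — in Theorem~\ref{virtmur-1} (the almost-classical Murasugi congruence $\Delta_K \doteq g^{p^r} f^{p^r-1} \mod p$), which itself rests on the positive-characteristic diagonalization of the circulant matrix $P$ in Theorem~\ref{xinvpx} and on the divisibility refinement in Proposition~\ref{Bnew}. The single subtle step in the present proof is confirming $\fatness_p u_1 > 0$ so that the division by $\fatness_p \Delta_K$ is legitimate; everything else is a direct substitution and the elementary inequality $p^r \leq 2 \Rightarrow p = 2$.
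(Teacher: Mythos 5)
Your proof is correct and is essentially the paper's own argument: Corollary~\ref{cor-irr} is just the $s=1$ instance of Corollary~\ref{Cnew}, whose proof runs exactly your computation $\fatness_p \Delta_K \geq (p^r-1)\fatness_p \Delta_K$ via Proposition~\ref{Bnew} to force $p^r - 1 = 1$. Your extra care in checking $\fatness_p u_1 > 0$ (so the division is legitimate) matches the paper's implicit use of the non-triviality of $\Delta_K \bmod p$.
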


Theorem~\ref{virtmur-1} is effective for the analysis of the (virtual) periods of  torus knots.

\begin{theorem}
Let $m,n$ be relatively prime integers with $m, n\geq 2$ and
let $K_{m,n}$ be the classical $(m,n)$-torus knot.
If $p^r$ is a (virtual) period for $K_{m,n}$ then $p$ divides $m$ or $n$, or 
possibly $p=2$ and $r=1$.
\end{theorem}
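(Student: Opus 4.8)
The plan is to read the Alexander polynomial of $K_{m,n}$ off the standard formula and feed it into the machinery of Section~\ref{sec:periods}. Since $K_{m,n}$ is a classical knot it is in particular almost classical, so Theorem~\ref{virtmur-1} and its corollaries apply with the virtual notion of period. Recall that
\[
\Delta_{K_{m,n}}(t) \doteq \frac{(t^{mn}-1)(t-1)}{(t^m-1)(t^n-1)} = \prod_{d \mid mn,\; d \nmid m,\; d\nmid n} \Phi_d(t),
\]
where $\Phi_d$ is the $d$-th cyclotomic polynomial; the product uses $\gcd(m,n)=1$ together with $t^N-1 = \prod_{d\mid N}\Phi_d(t)$, and it is nonempty because $d=mn$ occurs. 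In particular, over $\ZZ$ the polynomial $\Delta_{K_{m,n}}$ is a product of \emph{distinct} irreducibles and has span $(m-1)(n-1)\geq 1$.

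First I would record that $\Delta_{K_{m,n}}$ divides $t^{mn}-1$ in $\ZZ[t]$: indeed $t^{mn}-1 = \Delta_{K_{m,n}}(t)\cdot \frac{(t^m-1)(t^n-1)}{t-1}$ and the second factor lies in $\ZZ[t]$. Now suppose $p \nmid m$ and $p \nmid n$, so that $p \nmid mn$. Then $t^{mn}-1$ is separable modulo $p$, since its formal derivative $mn\,t^{mn-1}$ is a unit times $t^{mn-1}$ and hence coprime to $t^{mn}-1$ over $\F_p$ (as $t \nmid t^{mn}-1$). Consequently its divisor $\Delta_{K_{m,n}} \bmod p$ is squarefree over $\F_p$. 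It is also non-trivial: its leading coefficient is $\pm 1$, and because it divides $t^{mn}-1$, whose constant term is $-1$, its own constant term is nonzero mod $p$; hence $\fatness_p \Delta_{K_{m,n}} = (m-1)(n-1) \geq 1$.

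With these two facts in hand I would finish by applying Corollary~\ref{Cnew}. Squarefreeness and non-triviality mean that $\Delta_{K_{m,n}} \doteq u_1\cdots u_s \bmod p$ for distinct irreducibles $u_1,\dots,u_s$ over $\F_p$ with $s\geq 1$. If $K_{m,n}$ has period $p^r$, Corollary~\ref{Cnew} then forces $p=2$ and $r=1$. Thus, whenever $p^r$ is a period of $K_{m,n}$, either $p \mid m$ or $p \mid n$, or else $p \nmid mn$ and the preceding argument yields $p=2,\ r=1$; this is exactly the claimed dichotomy.

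I expect the only real content to be the squarefreeness step, namely arranging that $\Delta_{K_{m,n}} \bmod p$ has no repeated factors. The cleanest route is the separability of $t^{mn}-1$ modulo $p$ under the hypothesis $p\nmid mn$, combined with the divisibility $\Delta_{K_{m,n}}\mid t^{mn}-1$. Everything else is bookkeeping: identifying the cyclotomic factorization, verifying non-triviality of the reduction, and invoking Corollary~\ref{Cnew}, which already packages the span inequality of Proposition~\ref{Bnew} into precisely the conclusion $p^r\le 2$.
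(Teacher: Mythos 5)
Your argument is correct and follows the paper's own proof essentially verbatim: both use the divisibility $\Delta_{K_{m,n}} \mid t^{mn}-1$ and the separability of $t^{mn}-1$ modulo $p$ when $p \nmid mn$ to conclude that $\Delta_{K_{m,n}} \bmod p$ is squarefree and nontrivial, and then rule out repeated factors forced by the Murasugi congruence. The only cosmetic difference is that you invoke Corollary~\ref{Cnew} to finish, whereas the paper re-derives the same multiplicity contradiction directly from Theorem~\ref{virtmur-1}.
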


\begin{proof}
The Alexander polynomial of $K_{m,n}$ is
\[
\Delta_{K_{m,n}} (t) = \frac{(t-1)(t^{mn}-1)}{(t^m -1)(t^n-1)},
\]
see \cite[page 119]{Lick}.
Note that  $\Delta_{K_{m,n}}$ has highest order term $t^{(m-1)(n-1)}$ and
lowest order term $1$.
Hence for any prime $p$,  $\fatness \Delta_{K_{m,n}} = \fatness_p \Delta_{K_{m,n}} = (m-1)(n-1)$,  a positive number since we have assumed $m,n\geq2$.
Observe that $\Delta_{K_{m,n}}$ divides $t^{mn}-1$.
Let $p$ be a prime that does not divide $m$ or $n$.
We have $\tfrac{d~}{dt} \left( t^{mn} - 1 \right) = mn t^{mn-1}$
which is not $0$ modulo $p$ because $p \nmid mn$.
Hence $t^{mn} - 1$ and $mn t^{mn-1}$ are relatively prime as polynomials over $\F_p$
and so $t^{mn} - 1$ does not have multiple roots. The only root of $mnt^{mn-1}$ is $t=0$ (with multiplicity $mn-1$), and this is not a root of $t^{mn}-1$.
Since $\Delta_{K_{m,n}} $ divides $t^{mn} - 1$, it follows that $\Delta_{K_{m,n}} $ does
not have multiple roots as a polynomial over $\F_p$.
If $p^r$ is a virtual period for $K_{m,n}$, then by Theorem~\ref{virtmur-1}
\[
\Delta_{K_{m,n}}   = \left( \Delta_{(K_{m,n})_*}   \right)^{p^r} f^{p^r-1}  \mod p.
\]
If $r > 1$ or if $r = 1$ and $p \neq 2$ then the right hand side has multiple roots, a contradiction.
This shows that if $p^r$ is a virtual period for $K_{m,n}$ then $p$ divides $m$ or $n$ or 
possibly $p=2$ and $r=1$.
\end{proof}


We can also use Theorem~\ref{virtmur-1} to exclude certain composite periods, as illustrated by the example below.

\begin{example} 
\label{threesix}
Let $K$ be any of the knots: 3.6 (the classical trefoil), 5.2160, 6.72938, 6.73053, 6.76479, 6.77833, 6.77844, 6.77985 in Table~\ref{tab:periods}.
The knot $K$ does not have period 6.
\end{example}

\begin{proof}
For $K$ in the given list of knots $\Delta_{K} = t^2 - t +1$. 
Suppose that $K$ has period $6=2 \cdot 3$.
Then by Theorem~\ref{ACrep},
there exists an almost classical braid $\beta$ such that $K =\widehat{\beta^6}$.
Note that $K$ also has period 2 with quotient $\widehat{\beta^3}$,  and period 3  with quotient $\widehat{\beta^2}$.
Since the period 3 and period 2 diagrams are the same (coming from the same period 6 diagram), the polynomial $f(t)$
appearing in Theorem~\ref{virtmur-1}
will be the same, whether we  regard $K$ as having period 2 or 3. (This comes from the definition of $f(t)$ as $\sum^k_{i=1} t^{\lambda_{i}}$ where $k$ is the number of strands in $\beta$.)
By Theorem~\ref{virtmur-1}, we have:
\[
\Delta_{K} = t^2 + t + 1 \doteq (\Delta_{\widehat{\beta^3}})^2f(t) \mod 2.
\]
Note that  $t^2+t+1$ is irreducible modulo 2.  Hence
$$ f(t)  \doteq t^2+t+1  \mod 2$$
and $\Delta_{\widehat{\beta^3}} \doteq 1 \mod 2$.
Again by Theorem~\ref{virtmur-1}, we also have:
$$\Delta_{K} = t^2+2t+1 \doteq (\Delta_{\widehat{\beta^2}})^3(f(t))^2 \mod 3$$
$$\text{ implies }~~ (t+1)^2 \doteq (f(t))^2 \mod 3$$
$$\text{ implies }~~ t+1 \doteq f(t) \mod 3$$
and $\Delta_{\widehat{\beta^2}} \doteq 1 \mod 3$.
Since  $\Delta_{K} = t^2-t+1$ is irreducible over $\ZZ$ and $\Delta_{\widehat{\beta^2}} ~|~ \Delta_{K}$,
we have  $\Delta_{\widehat{\beta^2}} \doteq 1$ or $\Delta_{\widehat{\beta^2}} \doteq \Delta_{K}$.
The latter possibility is excluded because $\Delta_{\widehat{\beta^2}} \doteq 1 \mod 3$ and $\Delta_K \neq 1 \mod 3$,
 and so  $\Delta_{\widehat{\beta^2}} \doteq 1$.
Applying  Theorem~\ref{virtmur-1} again to $\Delta_{\widehat{{\beta}^2}}$ as a period $2$ knot,
\[
1 \doteq \Delta_{\widehat{\beta^2}}  \doteq ( \Delta_{\widehat{\beta}})^2 \, f(t) \mod 2.
\]
Hence $f(t) \doteq 1 \mod 2$,  a contradiction since $f(t)  \doteq t^2+t+1  \mod 2$.
Thus $K$ cannot have period 6.
\end{proof}

Generalizing Example~\ref{threesix}, we give some criteria for the exclusion of composite periods of
the form $2p$, where $p$ is an odd prime.

Let $K$ be an almost classical knot with period $2p$ where $p$ is an odd prime.
By Theorem~\ref{ACrep},
there exists an almost classical braid $\beta$ such that $K =\widehat{\beta^{2p}}$.
Note that $K$ also has period $p$ with quotient $\widehat{\beta^2}$,  and period 2  with quotient $\widehat{\beta^p}$.
The knot $\widehat \beta$ is a quotient of both $\widehat{\beta^2}$  and  $\widehat{\beta^p}$.
By Theorem~\ref{virtmur-1} there is a polynomial $f(t) \in \ZZ[t^{\pm 1}]$ such that

\begin{align}
\Delta_K = \Delta_{\widehat{{\beta}^{2p}}} &  \doteq (\Delta_{\widehat{\beta^p}})^2 f \mod 2 \label{Meq1}\\
\Delta_K = \Delta_{\widehat{{\beta}^{2p}}} & \doteq (\Delta_{\widehat{\beta^2}})^p f^{p-1} \mod p \label{Meq2}\\
\Delta_{\widehat{{\beta}^{2}}} & \doteq (\Delta_{\widehat{\beta}})^2f  \mod 2 \label{Meq3}\\
\Delta_{\widehat{{\beta}^{p}}} & \doteq (\Delta_{\widehat{\beta}})^pf^{p-1} \mod p \label{Meq4}
\end{align}


\begin{proposition}\label{M_1}
Let $K$ be an almost classical knot.
Assume that $\Delta_K \mod p$ is not trivial, where $p$ is an odd prime.
If $\Delta_K$ is irreducible over $\ZZ$ and $\Delta_K \doteq g^2 \mod 2$ where $g$ is irreducible modulo $2$,  then $K$ cannot have period $2p$.
\end{proposition}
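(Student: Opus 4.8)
The plan is to argue by contradiction. Assume $K$ has period $2p$, so that by Theorem~\ref{ACrep} we may write $K=\widehat{\beta^{2p}}$ for an almost classical braid $\beta$, and the four congruences \eqref{Meq1}--\eqref{Meq4} hold with a common polynomial $f$. The strategy is to extract structural information about $f \bmod 2$ and about $\Delta_{\widehat{\beta^p}} \bmod 2$ from the hypothesis $\Delta_K \doteq g^2 \bmod 2$ (with $g$ irreducible mod $2$), and then clash this against the integral irreducibility of $\Delta_K$.

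First I would pin down the intermediate quotient over $\ZZ$. Since $\widehat{\beta^2}$ is the period-$p$ quotient of $K$, Theorem~\ref{virtmur-1}(1) (equivalently Corollary~\ref{divisible-three}) gives $\Delta_{\widehat{\beta^2}}\mid\Delta_K$; as $\Delta_K$ is irreducible over $\ZZ$, either $\Delta_{\widehat{\beta^2}}\doteq 1$ or $\Delta_{\widehat{\beta^2}}\doteq\Delta_K$. The latter is impossible: substituting into \eqref{Meq2} and cancelling $\Delta_K$ (which is nonzero mod $p$, since $\Delta_K\bmod p$ is assumed nontrivial) yields $1\doteq(\Delta_K f)^{\,p-1}\bmod p$, forcing $\fatness_p\Delta_K=0$ and contradicting nontriviality of $\Delta_K\bmod p$. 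Hence $\Delta_{\widehat{\beta^2}}\doteq 1$.

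Next I would feed $\Delta_{\widehat{\beta^2}}\doteq 1$ into \eqref{Meq3} to obtain $1\doteq(\Delta_{\widehat\beta})^2 f\bmod 2$; comparing spans over $\F_2$ forces $\fatness_2 f=0$, i.e. $f\doteq 1\bmod 2$. Returning to \eqref{Meq1} with $f\doteq 1\bmod 2$ gives $(\Delta_{\widehat{\beta^p}})^2\doteq g^2\bmod 2$, and since $\F_2[t^{\pm1}]$ is a UFD this yields $\Delta_{\widehat{\beta^p}}\doteq g\bmod 2$. To close the loop I use that $\widehat{\beta^p}$ is the period-$2$ quotient of $K$, so $\Delta_{\widehat{\beta^p}}\mid\Delta_K$ over $\ZZ$; irreducibility again forces $\Delta_{\widehat{\beta^p}}\doteq 1$ or $\doteq\Delta_K$. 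The first is excluded because $\Delta_{\widehat{\beta^p}}\doteq g\bmod 2$ is nontrivial mod $2$ (as $g$ is irreducible, $\fatness_2 g\geq 1$). Thus $\Delta_{\widehat{\beta^p}}\doteq\Delta_K\doteq g^2\bmod 2$, which together with $\Delta_{\widehat{\beta^p}}\doteq g\bmod 2$ gives $g^2\doteq g\bmod 2$, i.e. $g\doteq 1\bmod 2$, contradicting irreducibility of $g$.

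I expect the only delicate point to be the bookkeeping of the relation $\doteq$ across two different reductions (mod $2$ versus mod $p$) and the repeated use of integral irreducibility of $\Delta_K$ to upgrade a divisibility into an equality up to units; the square-root step in the UFD $\F_2[t^{\pm1}]$ and the span comparisons are routine. It is worth recording why each congruence is invoked exactly once (\eqref{Meq2} to kill $\Delta_{\widehat{\beta^2}}$, \eqref{Meq3} to force $f\doteq 1\bmod 2$, \eqref{Meq1} to identify $\Delta_{\widehat{\beta^p}}\bmod 2$), and the hypothesis $p$ odd is used implicitly in that $2$ and $p$ are distinct primes so the two reductions are independent.
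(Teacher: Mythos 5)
Your argument is correct, but it is organized differently from the paper's proof. The paper works directly from \eqref{Meq1}: since $g$ is irreducible modulo $2$ and $(\Delta_{\widehat{\beta^p}})^2$ is a square, the factorization $g^2 \doteq (\Delta_{\widehat{\beta^p}})^2 f \bmod 2$ forces either $f \doteq 1$ or $f \doteq g^2$ modulo $2$, and these two cases are treated separately --- Case 1 is killed inside \eqref{Meq1} by the integral irreducibility dichotomy for $\Delta_{\widehat{\beta^p}}$, while Case 2 forces $\Delta_{\widehat{\beta^p}} \doteq 1$ over $\ZZ$ and then passes through \eqref{Meq4} and \eqref{Meq2} to a contradiction modulo $p$. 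You avoid the case split entirely by first pinning down the other intermediate quotient: the span count in \eqref{Meq2} shows $\Delta_{\widehat{\beta^2}} \doteq \Delta_K$ is impossible (the paper never needs this observation), so $\Delta_{\widehat{\beta^2}} \doteq 1$ over $\ZZ$; feeding that into \eqref{Meq3} forces $f \doteq 1 \bmod 2$, which collapses the situation to the paper's Case 1, and you finish with the same irreducibility dichotomy for $\Delta_{\widehat{\beta^p}}$. In effect you use \eqref{Meq1}, \eqref{Meq2}, \eqref{Meq3} and never \eqref{Meq4}, whereas the paper uses \eqref{Meq1}, \eqref{Meq2}, \eqref{Meq4} and never \eqref{Meq3}; your version buys a linear, case-free argument at the cost of invoking the period-$2$ congruence \eqref{Meq3} for the intermediate knot $\widehat{\beta^2}$. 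Two small points of hygiene: the cancellation of $\Delta_K$ in \eqref{Meq2} needs $\Delta_K \bmod p \neq 0$, which is not literally the same as ``nontrivial'' but is guaranteed for almost classical knots by $\Delta_K(1)=\pm1$ (\cite[Lemma 7.5]{AC}); and the square-root step $(\Delta_{\widehat{\beta^p}})^2 \doteq g^2 \Rightarrow \Delta_{\widehat{\beta^p}} \doteq g$ in $\F_2[t^{\pm1}]$ is, as you say, routine in a UFD once one knows $\Delta_{\widehat{\beta^p}} \bmod 2 \neq 0$, which follows since its square is associate to $g^2 \neq 0$.
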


\begin{proof}
Suppose that $K$ has period $2p$.
By assumption, $\Delta_K \doteq g^2 \mod 2$ for some polynomial $g$, which is irreducible  modulo 2.
By \eqref{Meq1},
$$\Delta_K \doteq g^2 \doteq (\Delta_{\widehat{\beta^p}})^2 f \mod 2$$

\noindent
Case $1$: $f \doteq 1 \mod 2 $ and
$$\Delta_K \doteq g^2 \doteq (\Delta_{\widehat{\beta^p}})^2 \mod 2$$
Since $\Delta_K$ is irreducible over $\ZZ$, and $\Delta_{\widehat{\beta^p}} ~|~ \Delta_K$, then $\Delta_{\widehat{\beta^p}} = 1 \text{ or } \Delta_{\widehat{\beta^p}} = \Delta_K$. If $\Delta_{\widehat{\beta^p}} = 1$, then we would have 
$$\Delta_K \doteq g^2 \doteq (\Delta_{\widehat{\beta^p}})^2 = 1 \mod 2,$$
which is a contradiction, since $\Delta_K \neq 1 \mod 2$. Therefore, we must have $\Delta_{\widehat{\beta^p}} \doteq \Delta_K$. This gives
$$\Delta_K \doteq g^2 \doteq (\Delta_{\widehat{\beta^p}})^2  = (\Delta_K)^2\mod 2,$$
which is also a contradiction, as $\Delta_K \neq (\Delta_K)^2 \mod 2$ for non-trivial $\Delta_K$. Thus  Case $1$ cannot occur.

\smallskip

\noindent
Case 2: $f \doteq g^2 \mod 2$ and $(\Delta_{\widehat{\beta^p}})^2 \doteq 1 \mod 2$. Again, we have that $\Delta_{\widehat{\beta^p}} = 1 \text{ or } \Delta_{\widehat{\beta^p}} = \Delta_K$. Since $(\Delta_{\widehat{\beta^p}})^2 \doteq 1 \mod 2$, we must have that $\Delta_{\widehat{\beta^p}} = 1$ over $\ZZ$ (as $\Delta_K \neq 1 \mod 2$). Now we use that ${\widehat{\beta^p}}$ is a period $p$ knot. Applying  \eqref{Meq4}, we get
$$1 = \Delta_{\widehat{\beta^p}} \doteq (\Delta_{\widehat{\beta}})^pf^{p-1} \mod p,$$
so that $f \doteq 1 \mod p$.
Regarding $K$ as a period $p$ knot,  \eqref{Meq2} yields
$$\Delta_K \doteq (\Delta_{\widehat{\beta^2}})^pf^{p-1} \mod p$$
$$\text{ implies }~ \Delta_K \doteq (\Delta_{\widehat{\beta^2}})^p \mod p,$$
since  $f \doteq 1 \mod p$. Since $\Delta_K$ is irreducible over $\ZZ$, and $\Delta_{\widehat{\beta^2}} ~|~ \Delta_K$, then $\Delta_{\widehat{\beta^2}} = 1 \text{ or } \Delta_{\widehat{\beta^2}} = \Delta_K$. But $\Delta_K \neq 1 \mod p$, so $\Delta_{\widehat{\beta^2}} \neq 1$, and $\Delta_{\widehat{\beta^2}} \neq \Delta_K$ since $\Delta_K \neq (\Delta_K)^p \mod p$. Thus, Case 2 cannot hold.
Therefore, since neither case holds, $K$ cannot have period $2p$.
\end{proof}

\begin{proposition}\label{M_2}
Let $K$ be an almost classical knot.
Assume that $\Delta_K \mod p$ is not trivial, where $p$ is an odd prime.
If $\Delta_K \doteq g^2$ over $\ZZ$ for some polynomial $g$ which is irreducible modulo $2$, then $K$ cannot have period $2p$.
\end{proposition}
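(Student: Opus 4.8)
The plan is to argue by contradiction, assuming $K$ has period $2p$. As in the discussion preceding Proposition~\ref{M_1}, Theorem~\ref{ACrep} lets us write $K = \widehat{\beta^{2p}}$ for an almost classical braid $\beta$, and Theorem~\ref{virtmur-1} supplies a \emph{single} polynomial $f \in \ZZ[t^{\pm 1}]$ making the four congruences \eqref{Meq1}--\eqref{Meq4} hold simultaneously. The hypothesis $\Delta_K \doteq g^2$ over $\ZZ$ with $g$ irreducible modulo $2$ gives three facts at once: $g$ is irreducible over $\ZZ$; reducing, $\Delta_K \doteq g^2 \bmod 2$ and $\Delta_K \doteq g^2 \bmod p$; and since $\Delta_K \bmod p$ is assumed nontrivial, $g \bmod p$ is a nonzero polynomial with $s := \fatness_p g > 0$. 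The strategy is to pin down $\Delta_{\widehat{\beta^p}}$ and $f$ modulo $2$ from \eqref{Meq1}, and then to eliminate the surviving possibilities using \eqref{Meq2} and \eqref{Meq4} by comparing $\fatness_p$ (spans modulo $p$), where the oddness of $p$ does all the work.

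First I would analyze \eqref{Meq1}, namely $g^2 \doteq (\Delta_{\widehat{\beta^p}})^2 f \bmod 2$. Working in the unique factorization domain $\F_2[t^{\pm 1}]$ and using that $g$ is irreducible modulo $2$, every irreducible factor of the right-hand side must be $g$, so both $\Delta_{\widehat{\beta^p}} \bmod 2$ and $f \bmod 2$ are powers of $g$. Since $\widehat{\beta^p}$ is a quotient of $K$, Corollary~\ref{divisible-three} gives $\Delta_{\widehat{\beta^p}} \mid \Delta_K \doteq g^2$ over $\ZZ$, so $\Delta_{\widehat{\beta^p}} \doteq g^j$ over $\ZZ$ with $j \in \{0,1,2\}$; as $g \bmod 2$ is nontrivial this reduction does not collapse the multiplicity, and matching the multiplicity of $g$ in \eqref{Meq1} forces $2j$ plus the multiplicity of $g$ in $f \bmod 2$ to equal $2$. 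Only $j = 1$ (so $\Delta_{\widehat{\beta^p}} \doteq g$ over $\ZZ$ and $f \doteq 1 \bmod 2$) and $j = 0$ (so $\Delta_{\widehat{\beta^p}} \doteq 1$ over $\ZZ$ and $f \doteq g^2 \bmod 2$) survive.

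Each case is then closed by descent to the quotient $\widehat\beta$ together with a span count modulo $p$. In the case $j = 0$, relation \eqref{Meq4} reads $1 \doteq (\Delta_{\widehat\beta})^p f^{p-1} \bmod p$, forcing $f \doteq 1 \bmod p$; feeding this into \eqref{Meq2} gives $g^2 \doteq (\Delta_{\widehat{\beta^2}})^p \bmod p$, and since $\Delta_{\widehat{\beta^2}} \doteq g^i$ over $\ZZ$ for some $i \in \{0,1,2\}$, comparing $\fatness_p$ yields $2s = i p\, s$, i.e.\ $ip = 2$, which is impossible for an odd prime $p \geq 3$. In the case $j = 1$, the divisor $\Delta_{\widehat\beta}$ of $\Delta_{\widehat{\beta^p}} \doteq g$ is either $\doteq g$ or $\doteq 1$ over $\ZZ$; the former makes \eqref{Meq4} read $g \doteq g^p f^{p-1} \bmod p$, impossible by span since $ps > s$, while the latter gives $g \doteq f^{p-1} \bmod p$, which substituted into \eqref{Meq2} yields $g \doteq (\Delta_{\widehat{\beta^2}})^p \bmod p$ and the span identity $s = ip\, s$, again impossible for $p \geq 3$. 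Every branch contradicts the assumption, so $K$ has no period $2p$.

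The routine parts are the unique-factorization bookkeeping in $\F_2[t^{\pm 1}]$ and the arithmetic of spans; the one point needing genuine care, and the main obstacle relative to Proposition~\ref{M_1}, is that $\Delta_K$ is now a perfect square rather than irreducible over $\ZZ$, so each appeal to ``a divisor of $\Delta_K$'' produces a three-way split ($1$, $g$, or $g^2$) instead of a dichotomy. Keeping these trichotomies organized while insisting that the \emph{same} polynomial $f$ serve all four congruences is the crux; the uniform device that resolves every branch is the comparison of spans modulo $p$, which forces equations of the shape $ip = 1$ or $ip = 2$ and thereby exploits the hypothesis that $p$ is an odd prime.
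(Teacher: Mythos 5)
Your proposal is correct and follows the same skeleton as the paper's proof: realize $K=\widehat{\beta^{2p}}$ via Theorem~\ref{ACrep}, invoke the four congruences \eqref{Meq1}--\eqref{Meq4} with a single polynomial $f$, and split according to the multiplicity of $g$ in $\Delta_{\widehat{\beta^p}}\bmod 2$ read off from \eqref{Meq1}; this reproduces exactly the paper's Case~1 ($\Delta_{\widehat{\beta^p}}\doteq g$, $f\doteq 1\bmod 2$) and Case~2 ($\Delta_{\widehat{\beta^p}}\doteq 1$, $f\doteq g^2\bmod 2$). Where you genuinely differ is in how Case~1 is closed: the paper appeals to Corollary~\ref{cor-irr} to rule out the period-$p$ knot $\widehat{\beta^p}$ having Alexander polynomial $g$, whereas you split further on $\Delta_{\widehat{\beta}}\in\{1,g\}$ and kill both branches by comparing $\fatness_p$ in \eqref{Meq4} and \eqref{Meq2} (the identities $s=ps+(p-1)\fatness_p f$ and $s=ip\,s$). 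Your route is arguably tighter: Corollary~\ref{cor-irr} requires $g$ to be irreducible \emph{modulo $p$}, which is not part of the hypothesis (only irreducibility modulo $2$ is assumed), while your span count needs only $\fatness_p g>0$, which you correctly deduce from the nontriviality of $\Delta_K\bmod p$ together with $\Delta_K(1)=\pm1$. In Case~2 the two arguments coincide; the paper's comparisons $\Delta_K\neq\Delta_K^{\,p}$ and $g^2\neq g^p\bmod p$ are span arguments in disguise.

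One caveat: you assert that irreducibility of $g$ modulo $2$ implies irreducibility of $g$ over $\ZZ$, and you need the latter to conclude that every divisor of $g^2$ in $\ZZ[t^{\pm1}]$ is $\doteq g^j$ with $j\in\{0,1,2\}$. That implication is false in general, since a $\ZZ$-factor of $g$ can reduce to a unit of $\F_2[t^{\pm1}]$ (for instance $3t-2\equiv t\bmod 2$), so irreducibility over $\ZZ$ should be carried as an additional hypothesis (or verified in each application, as the paper does for $5.2426$) rather than derived. The paper's proof uses the same trichotomy without comment, so this does not separate your argument from theirs, but it should be stated explicitly.
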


\begin{proof}
Suppose that $K$ has period $2p$. 
\eqref{Meq1} gives
$$\Delta_K \doteq g^2  \doteq (\Delta_{\widehat{\beta^p}})^2 f \mod 2$$

\noindent
Case $1$: $f \doteq 1 \mod 2 $ and
$$\Delta_K \doteq g^2 \doteq (\Delta_{\widehat{\beta^p}})^2 \mod 2$$
Since $\Delta_K = g^2$ over $\ZZ$, and $\Delta_{\widehat{\beta^p}} ~|~ \Delta_K$, then $\Delta_{\widehat{\beta^p}} = 1, g, \text{ or } g^2 = \Delta_K$. If $\Delta_{\widehat{\beta^p}} = 1$, then we would have 
$$\Delta_K \doteq g^2 \doteq (\Delta_{\widehat{\beta^p}})^2 = 1 \mod 2,$$
which is a contradiction, since $\Delta_K \neq 1 \mod 2$. If $\Delta_{\widehat{\beta^p}} \doteq \Delta_K$, this gives
$$\Delta_K \doteq g^2 \doteq (\Delta_{\widehat{\beta^p}})^2  = (\Delta_K)^2\mod 2,$$
which is also a contradiction, as $\Delta_K \neq (\Delta_K)^2 \mod 2$ for non-trivial $\Delta_K$. If $\Delta_{\widehat{\beta^p}} \doteq g$, this gives
$$\Delta_K \doteq g^2 \doteq (\Delta_{\widehat{\beta^p}})^2 = g^2 \mod 2,$$
which holds. Then, since $\widehat{\beta^p}$ is a period $p$ knot,  \eqref{Meq4} gives
$$g = \Delta_{\widehat{\beta^p}} \doteq (\Delta_{\widehat{\beta}})^pf^{p-1} \mod p.$$
But $g$ is irreducible, so this cannot hold by Corollary~\ref{cor-irr}.
Thus Case $1$ cannot hold.

\noindent
Case 2: $f \doteq g^2 \mod 2$ and $(\Delta_{\widehat{\beta^p}})^2 \doteq 1 \mod 2$. Again, we have that $\Delta_{\widehat{\beta^p}} = 1, g, \text{ or } g^2 = \Delta_K$. Since $(\Delta_{\widehat{\beta^p}})^2 \doteq 1 \mod 2$, we must have that $\Delta_{\widehat{\beta^p}} = 1$ over $\ZZ$. Now we use that ${\widehat{\beta^p}}$ is a period $p$ knot. Applying \eqref{Meq4}, we get
$$1 = \Delta_{\widehat{\beta^p}} \doteq (\Delta_{\widehat{\beta}})^pf^{p-1} \mod p,$$
so that $f \doteq 1 \mod p$. Now, looking at $K$ as a period $p$ knot, \eqref{Meq2} gives
$$\Delta_K \doteq (\Delta_{\widehat{\beta^2}})^pf^{p-1} \mod p$$
$$\text{ implies }~ \Delta_K \doteq (\Delta_{\widehat{\beta^2}})^p \mod p,$$
since  $f \doteq 1 \mod p$. Since $\Delta_K = g^2$  over $\ZZ$,
and $\Delta_{\widehat{\beta^2}} ~|~ \Delta_K$, then $\Delta_{\widehat{\beta^2}} = 1,g, \text{ or } g^2 = \Delta_K$. But $\Delta_K \neq 1 \mod p$, so $\Delta_{\widehat{\beta^2}} \neq 1$, and $\Delta_{\widehat{\beta^2}} \neq \Delta_K$ since $\Delta_K \neq (\Delta_K)^p \mod p$.
The last choice is that $\Delta_{\widehat{\beta^2}} = g$, but then we would have $g^2 = \Delta_K \doteq (\Delta_{\widehat{\beta^2}})^p  = g^p \mod p$, and $g^2 \neq g^p \mod p$.
Thus Case 2 cannot hold.
Therefore, since neither case holds, $K$ cannot have period $2p$.
\end{proof}

We provide some examples of how the above results are used to eliminate periods, and a full list of all known and excluded periods for almost classical knots up to 6 crossings is given in Table~\ref{tab:periods}.
Proposition~\ref{Anew} is particularly useful in eliminating  many possible periods for a given knot.
However, if $\Delta_K \doteq 1 \mod p$ for some prime $p$,
then we cannot eliminate any prime power periods of the form $p^r$, 
as  Murasugi's condition  holds trivially. In particular, for knots $K$ with $\Delta_K \doteq 1$ trivial, we are unable to exclude any periods using Murasugi's conditions. This affects the knots 5.2012, 5.2025, 5.2080, 6.72507, 6.72557, 6.72692, 6.72695, 6.72975, 6,73007, and 6.73583.

The remaining knots all have $\Delta_K$ nontrivial, and we make a few general observations about their possible periods. Notice that they all
 have $\fatness \Delta_K \leq 4$, and in each case $\Delta_K \mod p$ is nontrivial for $p \geq 7.$ It follows from Proposition~\ref{Anew} that their only possible prime periods are 2, 3 and 5, though they may admit larger periods, either as prime powers of $2,3,5$, or as composite numbers.

\begin{example}[Knot $4.99$]
$\Delta_K \doteq 2t-1$.

This knot admits a 2-periodic diagram; in fact $K =  \widehat{\beta^2}$ for $\beta = \sigma_1 \tau_2 \sigma_2^{-1} \tau_2.$ 
Proposition~\ref{Anew} implies that $K$ cannot have a prime period for any $p \geq 3$, and
since $\Delta_K = 1 \mod 2$,  we cannot eliminate prime power periods of the form $2^r, r>1$.
This gives a partial answer to Question A in \cite{Kim-Lee-Seo-2014} for 4.99.

Similar considerations apply to almost classical knots $K$ with $\Delta_K(t) \doteq 2t-1$ to show that they have only $2^r, r\ge 1$ as possible periods. This applies to the knots 5.2133, 6.72944, 6.75341, 6.75348, and 6.89815 in Table~\ref{tab:periods}.
\end{example}

\begin{example}[Knot $4.105$]
$\Delta_K \doteq 2t^2-2t+1$.

This knot admits a 4-periodic diagram; in fact  $K=\widehat{\beta^4}$ for $\beta = \sigma_3 \tau_4 \tau_2 \tau_1 \tau_3 \tau_2.$
Proposition~\ref{Anew} implies that $K$ cannot have a prime period $p \ge 5$, and period 3 is eliminated by Corollary~\ref{cor-irr}, as $\Delta_K = 2t^2+t+1 \mod 3$ is irreducible.
Since $\Delta_K = 1 \mod 2$, so we cannot exclude prime power periods of the form $2^r, r>2$.
This gives a partial answer to Question A in \cite{Kim-Lee-Seo-2014} for 4.105.

Similar considerations apply to almost classical knots $K$ with $\Delta_K(t) \doteq 2t^2-2t+1$ to show that they have only $2^r, r\ge 1$ as possible periods. This applies to the knots 6.77908, 6.85613, and 6.89623 in Table~\ref{tab:periods}.
\end{example}

\begin{example}[Knot $4.108$]
$\Delta_K \doteq t^2-3t+1$.

This is the classical knot $4_1,$  which admits a classical diagram with period $p=2$.  
Proposition~\ref{Anew} implies that $K$ cannot have a prime period for $p \ge 5$, and period 3 is excluded by
Corollary~\ref{Cnew}, as $\Delta_K = t^2+1 \mod 3$ is irreducible. As well, the period $4 = 2^2$ is eliminated by Proposition~\ref{Anew}.
Thus $4.108$ has only 2 as a period, and virtual knot diagrams of $4_1$ do not introduce non-classical periods. This answers Question A in \cite{Kim-Lee-Seo-2014} for the figure eight knot.

Similar considerations apply to almost classical knots $K$ with $\Delta_K(t) \doteq t^2-3t+1$ to show that they have only 2 as a possible period. This applies to the knots 6.77905, 6.78358, and 6.79342  in Table~\ref{tab:periods}.
\end{example}

\begin{example}[Knot $5.2426$]
$\Delta_K \doteq (t^2-t+1)^2$.

This knot has no known periods, but notice that $\Delta_K \neq 1 \mod p$ for any prime $p$. 
Note that $t^2 -t +1 \mod p$ is an irreducible factor of $\Delta_K \mod p$ provided $p \neq 3$.
Using Proposition~\ref{Bnew}, with $s=1$ and $u_1=t^2 -t +1 $, if $\fatness_p \Delta_K < (p^r-1)\fatness_p (t^2 -t +1)$, then $K$ cannot have period $p^r$ (for $p \neq 3$). Now, $\fatness_p \Delta_K = 4$, and for prime $p \ge 5$, $(p-1) \fatness_p (t^2 -t +1)=2(p-1) \ge 8$, so $4 = \fatness_p \Delta_K < (p-1) \fatness_p (t^2 -t +1)$, and $K$ cannot have prime period $p \ge 5$. For $p^r = 2^2$, $4 = \fatness_2 \Delta_K < (2^2-1)\fatness_2 (t^2 -t +1) = 6$, so $K$ cannot have period $4=2^2$.
Prime power periods of the form $3^r, r >1$, are eliminated by Proposition~\ref{Anew} (since $4= \fatness_p \Delta_K < 3^r -1$), and the composite period
$6 = 2 \cdot 3$ is eliminated by Proposition~\ref{M_2} as follows. Notice that $\Delta_K \doteq (t^2-t+1)^2 \doteq g^2$ over $\ZZ$ and $\Delta_K \doteq  (t^2+t+1)^2 \doteq  g^2 \mod 2$, where $g$ is irreducible in both cases. As well, $\Delta_K$ is non-trivial modulo 3, so that Proposition~\ref{M_2} applies to show that $K$ cannot have period $q=6$. The only possible periods for $K=5.2426$ are therefore 2 and 3. 

Similar considerations apply to almost classical knots $K$ with $\Delta_K(t) \doteq (t^2-t+1)^2$ to show they have only $2, 3$ as possible periods. This applies to the knots 6.87262, 6.89187, and 6.89198 in Table~\ref{tab:periods}.
\end{example}

\begin{example}[Knot $6.90099$]
$\Delta_K = t^4-t^2+1$.

This knot admits a 3-periodic diagram; in fact $K=\widehat{\beta^3}$ for $\beta = \tau_3 \sigma_3 \sigma_4 \tau_1 \tau_3 \tau_2.$ Proposition~\ref{Anew} rules out prime periods for $p \ge 7$, and also the prime power $3^2$. Corollary~\ref{Cnew} excludes $p=5$ as a period since $\Delta_K = (t^2+ 2t +4)(t^2+3t+4) \mod 5$, with both factors irreducible modulo $5$.
In addition, Proposition~\ref{Bnew} can be used to exclude $2^2$ as a period, since $\Delta_K = (t^2+t+1)^2 \mod 2$ and $t^2+t+1$ is irreducible modulo 2, so $4 = \fatness_2 \Delta_K < (3)(2)$.
To exclude the composite period $q=6=2\cdot 3$, 
observe that $\Delta_K$ is irreducible over $\ZZ$, and that $\Delta_K \neq 1 \mod 3$ and $\Delta_K = (t^2+t+1)^2 = g^2 \mod 2$, where $g$ is irreducible modulo 2.
Hence, Proposition~\ref{M_1} applies to show $K$ cannot have period $q=6$.
Therefore $K$ has the known period 3 and also possibly period 2, but no others.
\end{example}

\begin{example}[Knot $6.90209$]
$\Delta_K = t^4-3t^3+3t^2-3t+1$.

This is the classical knot $6_2$, which admits a classical diagram with period 2.
By Proposition~\ref{Anew}, $K$ cannot have prime power period $p$ for $p \ge 7$.
Since $\Delta_K = (t^2+t+2)(t^2+2t+2) \mod 3$ with both factors irreducible modulo 3, 
Corollary~\ref{Cnew} excludes $p=3.$ 
Further, since $\Delta_K = (t^2+t+1)^2 \mod 5$, where $t^2+t+1$ is irreducible modulo 5, Proposition~\ref{Bnew} excludes $p=5$ as well, as $\fatness_5 \Delta_K = 4 < (4)(2)$. 
Note that  $\Delta_K = t^4+t^3+t^2+t+1 \mod 2$, which is irreducible modulo 2,
thus Proposition~\ref{Bnew} applies again to show that $K$ cannot have period $2^2$. 
Thus $K$ has only the known period 2 and no others.  In particular, we see that virtual knot diagrams of the classical knot $6_2$ do not introduce any non-classical periods. 
\end{example}


\renewcommand{\arraystretch}{1.00}
\begin{table}[ht] 

\begin{tabular}{cc}
\begin{minipage}{0.50\textwidth}
\begin{tabular}{|l|c|}
\hline
Knot & Alexander polynomial  \\
\hline \hline
{\bf 3.6} & $t^2-t+1$\\ \hline
4.99  & $2t-1$ \\ \hline
4.105 & $2t^2-2t+1$ \\ \hline
{\bf 4.108}  & $t^2-3t+1 $ \\  \hline
5.2012 & $1$ \\  \hline
5.2025 & $1 $ \\  \hline
5.2080 & $1$ \\  \hline
5.2133 & $2t-1$ \\  \hline
5.2160 & $t^2-t+1 $ \\  \hline
5.2331 & $t^3-t+1 $ \\  \hline
5.2426 & $(t^2-t+1)^2$ \\  \hline
5.2433 & \;\; $t^4-2t^3 +4t^2 -3t +1 $ \;\; \\  \hline
{\bf 5.2437} & $2t^2 - 3t + 2 $ \\  \hline
5.2439 & $t^3-2t^2 +3t -1$ \\  \hline
{\bf 5.2445} & $ t^4 - t^3 + t^2 - t + 1$\\  \hline
6.72507 & $1$ \\  \hline
6.72557 &$1$ \\  \hline
6.72692 & $1 $ \\  \hline
6.72695 & $1$ \\  \hline
6.72938 & $t^2-t+1  $ \\  \hline
6.72944 & $2t-1  $ \\  \hline
6.72975 & $1$ \\  \hline
6.73007 & $1$ \\  \hline
6.73053 & $t^2-t+1 $ \\  \hline
6.73583 &$1$ \\  \hline
6.75341 & $2t-1 $ \\  \hline
6.75348 & $t-2$ \\  \hline
6.76479 & $t^2-t+1 $ \\  \hline
6.77833 & $t^2-t+1 $ \\  \hline
6.77844 & $t^2-t+1 $ \\  \hline
6.77905 & $t^2-3t+1$ \\  \hline
6.77908 & $2t^2-2t+1 $ \\  \hline
6.77985 & $t^2-t+1 $ \\  \hline
6.78358 & $t^2-3t+1 $ \\  \hline
6.79342 & $t^2-3t+1$ \\  \hline
6.85091 & $t^2+t-1 $ \\  \hline
6.85103 & $t^3-t^2+2t-1 $ \\  \hline
6.85613\;\; & $t^2-2t+2 $ \\  \hline

\end{tabular}
\end{minipage}

\begin{minipage}{0.50\textwidth}
 \begin{tabular}{|l|c|} 
\hline
Knot  & Alexander polynomial   \\ 
\hline \hline
6.85774 & $t^3 - t^2 + 1 $ \\  \hline
6.87188 & $(2t - 1)(t^2 - t + 1) $ \\  \hline
6.87262 & $(t^2-t+1)^2$ \\  \hline
6.87269 & $(2t-1)^2$ \\  \hline
6.87310 & $t^4 -t^3 +2t^2 -2t +1 $ \\  \hline
6.87319 & $3t^2-3t+1 $ \\  \hline
6.87369 & $t^3-2t^2+3t-1 $ \\  \hline
6.87548 & $t^3-2t^2 -t+1 $ \\  \hline
6.87846 & $t^3-t^2+2t-1  $ \\  \hline
6.87857 & $t^2-4t+2 $ \\  \hline
6.87859 & $3t^2-3t+1 $ \\  \hline
6.87875 & $t^3+t^2-2t+1 $ \\  \hline
6.89156 &  $2t^3-t^2-t+1 $ \\  \hline
{\bf 6.89187} & $(t^2-t+1)^2$ \\  \hline
{\bf 6.89198} & $(t^2-t+1)^2$ \\  \hline
6.89623 & $2t^2-2t+1 $ \\  \hline
6.89812 & $t^3 -2t+2 $ \\  \hline
6.89815 & $2t - 1 $ \\  \hline
6.90099 & $t^4 - t^2 + 1 $ \\  \hline
6.90109 & \;\; $(2t^2-2t+1)(t^2-t+1)$ \;\; \\  \hline
6.90115 & $(t^2 - 3t + 1)(t^2 - t + 1)$ \\  \hline
6.90139 & $(3t^2 - 3t + 1)(t^2 - t + 1)$ \\  \hline
6.90146 & $t^4 - 5t^3 + 9t^2 - 5t + 1$ \\  \hline
6.90147 & $t^4-3t^3+6t^2-5t+2 $ \\  \hline
6.90150 & $t^4 -5t^3+6t^2-4t+1 $ \\  \hline
6.90167 & $t^4-2t^3+4t^2 -4t+2 $ \\  \hline
{\bf 6.90172} & $t^4-3t^3+5t^2-3t+1 $ \\  \hline
6.90185 & $3t^4-6t^3+6t^2-3t+1$ \\  \hline
6.90194 & $t^4-4t^3+8t^2-5t+1$ \\  \hline
6.90195 & $2t^4-3t^3+3t^2-2t+1 $ \\  \hline
{\bf 6.90209} & $t^4-3t^3+3t^2-3t+1 $ \\  \hline
6.90214 & $3t^2-4t+2 $ \\  \hline
6.90217 & $t^3-4t^2+3t-1 $ \\  \hline
6.90219 & $2t^3 - 3t^2 + 3t - 1$ \\  \hline
{\bf 6.90227} & $(t - 2)(2t - 1)$ \\  \hline
6.90228 & $4t^2 - 6t + 3$ \\  \hline
6.90232 & $2 t^3 - 6 t^2 + 4t - 1$ \\  \hline
6.90235 \;\; & $t^4 -3t^3+5t^2-3t+1 $ \\  \hline  
\end{tabular}
\end{minipage}
\end{tabular}
\bigskip
\caption{Alexander polynomials for almost classical knots up to six crossings.
(Boldface is used to indicate a classical knot.)}
\label{acks2}
\end{table}

\renewcommand{\arraystretch}{1.20}
\begin{table}
\begin{center}
\small
\begin{tabular}{|l||c|c|}
\hline
Knot & period & Periodic virtual braid  \\ \hline
$3.6=3_1$ &  2, 3 & $(\sigma_1\sigma_2)^2,$ \; $\sigma_1^3$  \\ \hline
$4.99$ & 2& $(\sigma_1 \tau_2 \sigma_2^{-1} \tau_2)^2$ \\ \hline
$4.105$ &  4 & $(\sigma_2 \tau_1)^4$ \\ \hline
$4.108 =4_1$ &  2 & $(\sigma_1 \sigma_2^{-1})^2$ \\ \hline
$5.2433$ & 5 &  $(\sigma_2 \tau_1 \tau_3)^5$ \\ \hline
$5.2437=5_2$ &2 &  $(\sigma_4 \tau_2\tau_1 \sigma_2 \tau_2 \sigma_3)^2$ \\ \hline
$5.2445=5_1$ & 2, 5 &  $(\sigma_1\sigma_2 \sigma_3 \sigma_4)^{2}$, \;  $\sigma_1^{5}$ \\ \hline
$6.85091$ & 2 & $(\tau_3\tau_4\tau_3  \sigma_2 \sigma_3  \tau_1 \tau_2\sigma_3^{-1} \tau_2 \tau_1)^2$ \\ \hline
$6.87262$ & 3 & $(\tau_1\tau_3\tau_2\sigma_2\tau_2 \tau_1\sigma_2)^3$ \\ \hline
$6.87269$ & 3 & $(\sigma_2\tau_1 \sigma_2^{-1} \tau_1\tau_3)^3$ \\ \hline
$6.87310$ & 3& $(\tau_1\tau_2\tau_3\tau_2\sigma_2\tau_2 \tau_3\sigma_3)^3$\\ \hline
$6.87319$ & 3& $(\sigma_2^{-1}\tau_2 \tau_1\sigma_3 \tau_2)^3$  \\ \hline
$6.87857$ & 2& $(\tau_4\tau_3\tau_2 \tau_1 \tau_2 \sigma_1  \tau_2\sigma_2^{-1} \tau_1 \tau_2  \sigma_3)^2$ \\ \hline
$6.87859$ & 2 & $(\tau_4\tau_3\tau_2 \sigma_2  \tau_2\sigma_1^{-1} \tau_3  \sigma_3^{-1})^2$\\ \hline
$6.89156$ & 2 & $(\tau_4\tau_2 \sigma_2 \sigma_3 \tau_4\sigma_4  \tau_2 \tau_1)^2$  \\ \hline
$6.89187 =3_1\# 3_1$ & 2&  $(\sigma_2^3 \tau_1)^2$ \\ \hline
$6.89812$ & 2 & $(\sigma_3 \tau_2\tau_3  \sigma_3 \tau_1 \tau_2 \tau_3 \sigma_4 \tau_1 \tau_2)^2$ \\ \hline
$6.89815$ & 2 & $(\tau_4\tau_2\tau_3 \tau_2 \sigma_1  \sigma_2 \tau_4  \sigma_4^{-1})^2$ \\ \hline
$6.90099$ & 3 & $(\tau_4\tau_3\tau_2\sigma_2\sigma_1\tau_2)^3$  \\ \hline
$6.90139$ & 6 & $(\sigma_2 \tau_4 \tau_6 \tau_1 \tau_3 \tau_5)^6$ \\ \hline
$6.90146$ & 3 &  $(\tau_2 \sigma_2 \sigma_3^{-1} \tau_2 \tau_1)^3$ \\ \hline
$6.90172 =6_3$ & 2 &  $(\sigma_4  \sigma_2^{-1} \sigma_3^{-1} \sigma_1 \sigma_4^{-1} \sigma_2 \sigma_3 \sigma_1^{-1})^2$ \\ \hline
$6.90185$ & 3 & $(\sigma_3 \tau_2 \sigma_3 \tau_2)^3$ \\ \hline
$6.90194$ & 3 & $(\tau_3 \sigma_1 \sigma_2^{-1})^3$ \\ \hline
$6.90209=6_2$ & 2 & $(\sigma_4^{-1}\sigma_3^{-1}\sigma_2^{-1}\sigma_1)^2$ \\ \hline
$6.90214$ & 2 & $(\tau_3\tau_4\tau_3 \tau_2 \sigma_2 \tau_2 \sigma_1  \sigma_3)^2$  \\ \hline
$6.90219$ & 2 &   $(\tau_4\tau_3\tau_2 \sigma_2 \tau_2 \sigma_1 \tau_3 \sigma_3^{-1})^2$ \\ \hline
$6.90227=6_1$ & 2 &  $(\tau_3\tau_2\tau_1 \sigma_2 \tau_2 \tau_3 \sigma_2^{-1} \sigma_4)^2$ \\ \hline
$6.90228$ & 6 & $(\tau_1 \sigma_3 \tau_5 \tau_2 \tau_4 \tau_6)^6$  \\ \hline
$6.90232$ &  2&  $(\tau_4\tau_3\tau_2 \sigma_1 \tau_2 \sigma_2 \sigma_3^{-1} \tau_3)^2$  \\ \hline
$6.90235$ &  3 &$(\tau_1 \tau_4\tau_6\tau_5 \tau_4 \tau_3\tau_4 \tau_5 \sigma_3 \sigma_2^{-1})^3$ \\ \hline
\end {tabular}
\bigskip
\caption{\label{tab:periodic-braids} Periodic almost classical knots as closures of periodic virtual braids.}
\end{center}
\end{table}

\renewcommand{\arraystretch}{1.00}
\begin{table}
\begin{center}
\small
\begin{tabular}{|l||c|ccc|c|}
\hline
 &   & \multicolumn{3}{c|}{\bf Excluded Periods} & \bf{Non-Excluded}\\  
Knot & \makecell{Known \\ Periods} & Prime & \makecell{Prime \\Powers} & Composite & \makecell{{\bf Periods}\\ {\quad}} \\ \hline
$3.6=3_1$ &  2, 3 & $p \ge 5$ & $2^2$ & $2 \cdot 3$ & none \\ \hline
$4.99$ & 2& $p \ge 3$ & - &  N/A  & $2^k$, $k \ge 2$ \\ \hline
$4.105$ &  4 & $p \ge 3$ & - & N/A & $2^k$, $k \ge 3$ \\ \hline
$4.108 =4_1$ &  2 & $p \ge 3$ & $2^2$ & N/A & none \\ \hline
$5.2012$ & - &  - & - &  - & $q \ge 2$ \\ \hline
$5.2025$ & - &  - & - &  - & $q \ge 2$\\ \hline
$5.2080$ & - &  - & - &  - & $q \ge 2$ \\ \hline
$5.2133$ & - &  $p \ge 3 $ & - & N/A & $2^k$, $k \ge 1$  \\ \hline
$5.2160$ & - &  $p \ge 5$ & $2^2, 3^2$ & $2 \cdot 3$ & $2,3$  \\ \hline
$5.2331$ & - &  $p \ge 3 $ & $2^2$ & N/A  & $2$ \\ \hline
$5.2426$ & - &  $p \ge 5 $ & $2^2, 3^2$ & $2 \cdot 3$ & $2,3$\\ \hline
$5.2433$ & 5 &  
$p \neq 2,5$ & $2^2, 5^2$ & $2 \cdot 5$ & $2$ \\ \hline
$5.2437=5_2$ &2 &  $p \ge 3$ & - & N/A & $2^k$, $k \ge 2$ \\ \hline
$5.2439$ & - &  $p \ge 3$ & $2^2$ & N/A & none \\ \hline
$5.2445=5_1$ & 2, 5 &  
$p \neq 2,5$ & $2^2, 5^2$ & $2 \cdot 5$ & none \\ \hline
$6.72507$ & - &  - & - &  - & $q \ge 2$ \\ \hline
$6.72557$ & - &  - & - &  - & $q \ge 2$ \\ \hline
$6.72692$ & - &  - & - &  - & $q \ge 2$ \\ \hline
$6.72695$ & - &  - & - &  - & $q \ge 2$ \\ \hline
$6.72938$ & - &  $p \ge 5$ & $2^2, 3^2$ & $2 \cdot 3$  & $2,3$ \\ \hline
$6.72944$ & - &  $p \ge 3$ & - & N/A  & $2^k$, $k \ge 1$ \\ \hline
$6.72975$ & - &  - & - &  - & $q \ge 2$ \\ \hline
$6.73007$ & - &  - & - &  - & $q \ge 2$ \\ \hline
$6.73053$ & - &  $p \ge 5$ & $2^2, 3^2$ & $2 \cdot 3$  & $2,3$ \\ \hline
$6.73583$ & - &- &  - &  - & $q \ge 2$ \\ \hline
$6.75341$ & - & $p \ge 3$ & - & N/A & $2^k$, $k \ge 1$ \\ \hline
$6.75348$ & - &  $p \ge 3$ & - & N/A & $2^k$, $k \ge 1$ \\ \hline
$6.76479$ & - &  $p \ge 5$ & $2^2, 3^2$ & $2 \cdot 3$ & $2,3$ \\ \hline
$6.77833$ & - & $p \ge 5$ & $2^2, 3^2$ & $2 \cdot 3$ & $2,3$ \\ \hline
$6.77844$ & - &  $p \ge 5$ & $2^2, 3^2$ & $2 \cdot 3$  & $2,3$ \\ \hline
$6.77905$ & - &  $p \ge 3$ & $2^2$ & N/A & $2$ \\ \hline
$6.77908$ & - &  $p \ge 3$ & - & N/A & $2^k$, $k \ge 1$ \\ \hline
$6.77985$ & - &  $p \ge 5$ & $2^2, 3^2$ & $2 \cdot 3$ & $2,3$ \\ \hline
$6.78358$ & - &  $p \ge 3$ & $2^2$ & N/A & $2$ \\ \hline
$6.79342$ & - &  $p \ge 3$ & $2^2$ & N/A & $2$ \\ \hline
$6.85091$ & 2 & $p \ge 3$ & $2^2$ & N/A & none \\ \hline
$6.85103$ & - &  $p \ge 3$ & $2^2$ & N/A & $2$ \\ \hline
$6.85613$ & - &  $p \ge 3$ & - & N/A  & $2^k$, $k \ge 1$ \\ \hline
$6.85774$ & - &  $p \ge 3$ & $2^2$ & N/A & $2$ \\ \hline
$6.87188$ & - &   $p \ge 3$ & $2^2$ & N/A & $2$ \\ \hline
$6.87262$ & 3 & $p \ge 5$ & $2^2, 3^2$ & $2 \cdot 3$ & $2$ \\ \hline
\end {tabular}
\end{center}
\end{table}

\begin{table}
\begin{center}
\small 
 \begin{tabular}{|l||c|ccc|c|}
\hline
 &   & \multicolumn{3}{c|}{\bf Excluded Periods} & \bf{Non-Excluded}\\  
Knot & \makecell{Known \\ Periods} & Prime & \makecell{Prime \\Powers} & Composite & \makecell{{\bf Periods}\\ {\quad}} \\ \hline
$6.87269$ & 3 & $p \ge 5$ & $3^2$ &  -  & \makecell{$2^k, k\ge 1$ \\ $3 \cdot 2^k$, $k \ge 1$}\\ \hline
$6.87310$ & 3& $p \ge 5$ & $2^2, 3^2$ & $2 \cdot 3$ & $2$\\ \hline
$6.87319$ & 3& $p \ge 5 $ & $2^2$ & - & \makecell{$2, 3^k, k \ge 2$ \\ $2 \cdot 3^k, k \ge 1$}  \\ \hline
$6.87369$ & -& $p \ge 3$ & $2^2$ & N/A  & $2$   \\ \hline
$6.87548$ & - & $p \ge 3$ & $2^2$ & N/A & $2$ \\ \hline
$6.87846$ &  - & $p \ge 3$ & $2^2$ & N/A & $2$ \\ \hline
$6.87857$ & 2& $p \ge 3$ & - & N/A  & $2^k$, $k \ge 2$ \\ \hline
$6.87859$ & 2 & $p \ge 5$  & $2^2$ & - & \makecell{$3^k, k \ge 1$ \\ $2 \cdot 3^k$, $k \ge 1$}\\ \hline
$6.87875$ & - & $p \ge 3$ & $2^2$ & N/A & $2$ \\ \hline
$6.89156$ &  2 & $p \ge 3$ & $2^2$ & N/A & none  \\ \hline
$6.89187 =3_1\# 3_1$ &  2 & $p \ge 5$ & $2^2, 3^2$ & $2 \cdot 3$ & $3$ \\ \hline
$6.89198 =3_1\# 3_1^*$ & - & $p \ge 5$ & $2^2, 3^2$ & $2 \cdot 3$ & $2,3$ \\ \hline
$6.89623$ &  - & $p \ge 3$ & - & N/A & $2^k$, $k \ge 1$ \\ \hline
$6.89812$ & 2 & $p \ge 3$ & - & N/A & $2^k$, $k \ge 2$ \\ \hline
$6.89815$ &  - & $p \ge 3$ & - & N/A  & $2^k$, $k \ge 1$ \\ \hline
$6.90099$ & 3 & $p \ge 5$ & $2^2, 3^2$ & $2 \cdot 3$ & $2$  \\ \hline
$6.90109$ & - & $p \ge 3$ & $2^2$ & N/A & $2$ \\ \hline
$6.90115$ & - &  $p \ge 3$ & $2^2$ & N/A & $2$\\ \hline
$6.90139$ & 6 & $p \ge 5$ & $2^2, 3^2$ & - & none \\ \hline
$6.90146$ &  3 &  $p \ge 5$ & $2^2, 3^2$ & $2 \cdot 3$ & $2$ \\ \hline
$6.90147$ & - & $p \ge 3$ & $2^2$ & N/A & $2$ \\ \hline
$6.90150$ & - & $p \ge 3$ & $2^2$ & N/A & $2$ \\ \hline
$6.90167$ & - & $p \ge 3$ & - & N/A & $2^k$, $k \ge 1$ \\ \hline
$6.90172 =6_3$ & 2 & $p \ge 5$ & $2^2, 3^2$ & $2 \cdot 3$ & $3$ \\ \hline
$6.90185$ & 3 & $p \ge 5$ & $2^2$ & - & \makecell{$2,3^k,k \ge 2$ \\ $2 \cdot 3^k, k \ge 1$} \\ \hline
$6.90194$ & 3 & $p \ge 5$ & $2^2$ & $2 \cdot 3$ & $2$ \\ \hline
$6.90195$ &  - & $p \ge 3$ & $2^2$ & N/A & $2$ \\ \hline
$6.90209=6_2$ & 2 & $p \ge 3$ & $2^2$ &  N/A & none\\ \hline
$6.90214$ & 2 & $p \ge 3$ & - & N/A & $2^k$, $k \ge 2$  \\ \hline
$6.90217$ & - & $p \ge 3$ & $2^2$ & N/A & $2$ \\ \hline
$6.90219$ & 2 &   $p \ge 3$ & $2^2$ & N/A & none \\ \hline
$6.90227=6_1$ & 2 &  $p \ge 5$ & $3^2$ & - & \makecell{$2^k,k \ge 2$ \\ $3 \cdot 2^k, k \ge 1$} \\ \hline
$6.90228$ & 6 & $p \ge 5$ & - & - & \makecell{$2^j,3^k, j,k \ge 2$ \\ $2^j \cdot 3^k$, $j,k \ge 2$}  \\ \hline
$6.90232$ &  2&  $p \ge 3$ & - & N/A & $2^k$, $k \ge 2$  \\ \hline
$6.90235$ &  3 &$p \ge 5$ & $2^2, 3^2$ & $2 \cdot 3$ & $2$ \\ \hline
\end {tabular}
\bigskip
\caption{\label{tab:periods} Known and excluded periods of almost classical knots. Classical knots are indicated by their Rolfsen number.}
\end{center}
\end{table}

\clearpage   

\bibliographystyle{amsalpha}

\end{document}